\newcommand{\C}{\mathbb{C}}
\newcommand{\Z}{\mathbb{Z}}
\newcommand{\N}{\mathbb{N}}
\newcommand{\R}{\mathbb{R}}
\newcommand{\dist}{\text{dist}}
\renewcommand{\phi}{\varphi}
\renewcommand{\epsilon}{\varepsilon}
\newtheorem{theo}{Theorem}[section]
\newtheorem{prop}[theo]{Proposition}
\newtheorem{coro}[theo]{Corollary}
\newtheorem{lemm}[theo]{Lemma}
\theoremstyle{definition}
\newtheorem{def1}[theo]{Definition}
\theoremstyle{remark}
\newtheorem{rema}[theo]{Remark}
\newcommand{\nwc}{\newcommand}
\nwc{\Oph}{\operatorname{Op}_\hbar}
\nwc{\la}{\langle}
\nwc{\ra}{\rangle}
\nwc{\mf}{\mathbf} 
\nwc{\blds}{\boldsymbol} 
\nwc{\ml}{\mathcal} 
\renewcommand{\Re}{\operatorname{Re}}
\newcommand{\wt}{\widetilde}
\renewcommand{\d}{\partial}
\renewcommand{\phi}{\varphi}
\newcommand{\red}[1]{{\color{red}{#1}}}
\newcommand{\black}[1]{\color{black}}
\title{The wave trace and Birkhoff billiards}
\date{}
\author{Amir Vig}
\address{Department of Mathematics, UC Irvine, Irvine, CA 92697, USA} \email{bvig@uci.edu}
\begin{document}
\maketitle

\begin{abstract}
The purpose of this article is to develop a Hadamard-Riesz type parametrix for the wave propagator in bounded planar domains with smooth, strictly convex boundary. This parametrix then allows us to rederive an oscillatory integral representation for the wave trace appearing in \cite{MaMe82} and compute its principal symbol explicitly in terms of geometric data associated to the billiard map. This results in new formulas for the wave invariants. The order of the principal symbol, which appears to be inconsistent in the works of \cite{MaMe82} and \cite{Popov1994}, is also corrected. In those papers, the principal symbol was never actually computed and to our knowledge, this paper contains the first explicit formulas for the principal symbol of the wave trace. The wave trace formulas we provide are localized near both simple lengths corresponding to nondegenerate periodic orbits and degenerate lengths associated to one parameter families of periodic orbits tangent to a single rational caustic. Existence of a Hadamard-Riesz type parametrix with explicit symbol and phase calculations in the interior appears to be new in the literature, with the exception of the author's previous work \cite{Vig18} in the special case of elliptical domains. This allows us to circumvent the symbol calculus in \cite{DuGu75} and \cite{HeZe12} when computing trace formulas, which are instead derived from integrating our explicit parametrix over the diagonal.
\end{abstract}

\section{Introduction}\label{Introduction section}
\noindent The purpose of this paper is to develop a Hadamard-Riesz type parametrix for the wave propagator in bounded planar domains with smooth, strictly convex boundary. We then use this parametrix to produce asymptotic expansions for the distributional wave trace near isolated lengths in the length spectrum. Let $\Omega$ be such a domain and denote by $\Delta$ the Dirichlet Laplacian on $\Omega$. The even wave propagator $E(t)$ is defined to be the solution operator for the wave equation
\begin{align}\label{wave equation}
\begin{cases}
(\d_t^2 - \Delta) E = 0 & (x \in \Omega)\\
E(0) = \text{Id} & (x \in \Omega)\\
\d_t E \big|_{t = 0} = 0 & (x \in \Omega),
\end{cases}
\end{align}
with Dirichlet boundary conditions. In spectral theoretic terms, we can write $E(t) = \cos t \sqrt{-\Delta}$, which is the even part of the half wave propagator $e^{i t \sqrt{-\Delta}}$. In \cite{Ch76}, it is shown that there exist Lagrangian distributions $E_j(t,x,y)$ such that microlocally away from the tangential rays, the Schwartz kernel of $E(t)$ is given by
\begin{align}\label{Chazarain cosine}
E(t,x,y) = \sum_{j = - \infty}^{+\infty} E_j(t,x,y) + C^\infty(\R \times \Omega \times \Omega),
\end{align}
with $E_j$ corresponding to a wave of $j$ reflections at the boudary. The sum in \eqref{Chazarain cosine} is locally finite in time. If we restrict our attention to waves which make $j$ reflections and travel approximately once around the boundary, we have the following explicit parametrix for $E_j(t,x,y)$.
\begin{theo}\label{HRP}
Let $\Omega \subset \R^2$ be a bounded domain with smooth, strictly convex boundary. Then there exists $j_0 = j_0(\Omega)\in \N$ sufficiently large such that the following holds: for all $j \geq j_0$, there exists a tubular neighborhood $U_j$ of the diagonal of the boundary $\Delta \d \Omega \subset \Omega \times \Omega$ such that for all $(x, y) \in U_j$ and $t$ less than but sufficiently close to $|\d \Omega|$,
$$
E_j(t,x,y) = \sum_{\pm} \sum_{k = 1}^8 (-1)^j e^{\pm i \pi / 4} \int_{0}^\infty e^{\pm i\tau(t - \Psi_j^k(x,y))} b_{j, k, \pm} (\tau, x, y) d\tau + C^\infty(\R \times U_j),
$$
microlocally near geodesic loops of rotation number $1/j$. Here, $b_{j, k, \pm} \in S_{\text{cl}}^{1/2}(U_j \times \R^1)$ are classical elliptic symbols of order $1/2$ and the functions $\Psi_j^k(x,y)$ ($1 \leq k \leq 8$) are lengths of the $8$ billiard orbits connecting $y$ to $x$ in $j$ reflections and approximately one rotation (see Theorem \ref{8 orbit lemma}). The principal term in the asymptotic expansion for $b_{j,k,\pm}$ is given in boundary normal coordinates $x = (\mu, \phi)$, $y = (\nu, \theta)$ (see Section \ref{Oscillatory integral representation}) by
\begin{align*}
\frac{\tau^{1/2}}{2(1 - \mu \kappa)}  \Bigg|\frac{\d \Psi_j^k}{\d \mu} \frac{\d \Psi_j^k}{\d \theta}\frac{\d^2 \Psi_j^k}{\d \nu \d \phi} &+ \frac{\d \Psi_j^k}{\d \phi}\frac{\d \Psi_j^k}{\d \nu}\frac{\d^2 \Psi_j^k}{\d \theta \d \mu}\\ &-     \frac{\d \Psi_j^k}{\d \mu}\frac{\d \Psi_j^k}{\d \nu}\frac{\d^2 \Psi_j^k}{\d \theta \d \phi}       - \frac{\d \Psi_j^k}{\d \phi} \frac{\d \Psi_j^k}{\d \theta} \frac{\d^2 \Psi_j^k}{\d \nu \d \mu} \Bigg|^{1/2},
\end{align*}
where $\kappa$ is the curvature of $\d \Omega$ at $(0, \phi)$.
\end{theo}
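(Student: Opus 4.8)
The plan is to construct $E_j$ as a Hadamard–Riesz parametrix adapted to the $j$-th reflected wave, following the geometry encoded in the $8$-orbit structure of Theorem \ref{8 orbit lemma}. The starting point is the classical Hadamard parametrix for the free wave kernel in the plane, written in Riesz form as a superposition $\int_0^\infty e^{i\tau(t-r(x,y))}a(\tau,x,y)\,d\tau$ where $r(x,y)=|x-y|$ is the Euclidean distance and $a\in S^{1/2}_{\mathrm{cl}}$, the order $1/2$ reflecting the $n=2$ dimensional normalization of the half-wave operator. Each reflection at the strictly convex boundary is implemented by the standard reflection construction: one adds a corrective term whose phase is the length of the once-reflected broken geodesic and whose amplitude is determined by matching Dirichlet data on $\partial\Omega$, exactly as in the method of images localized to a neighborhood of the reflection point. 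Iterating this $j$ times and bookkeeping the phases produces, after stationary-phase reduction in the intermediate reflection points, a finite sum of oscillatory integrals whose phases are precisely the generating functions $\Psi_j^k(x,y)$ of the $k=1,\dots,8$ billiard arcs from $x$ to $y$ of rotation number $1/j$; the two signs $\pm$ and the factors $e^{\pm i\pi/4}$ arise from the $\pm\tau$ halves of the cosine propagator and the Maslov/Morse contributions of the stationary-phase expansions at the reflection points. The prefactor $f^{1/2}(\mu,\phi)$ with $f(0,\phi)=1$ is the conformal factor converting the flat Hadamard leading coefficient into boundary normal coordinates.

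Concretely I would proceed as follows. First, set up boundary normal coordinates $x=(\mu,\phi)$, $y=(\nu,\theta)$ near $\Delta\partial\Omega$ and record the metric and the Jacobian $f$; state the free Hadamard–Riesz parametrix and verify it solves the wave equation modulo $C^\infty$. Second, perform the single-reflection step: solve the boundary-value problem $(\partial_t^2-\Delta)E_1^{\text{refl}}\equiv 0$, $E_0+E_1^{\text{refl}}\big|_{\partial\Omega}=0$, obtaining a new Riesz integral with phase equal to the reflected length and with amplitude in $S^{1/2}_{\mathrm{cl}}$; the transport equations along the reflected rays determine the subprincipal terms. Third, iterate: composing $j$ such reflections and using the locally finite sum \eqref{Chazarain cosine} from \cite{Ch76}, the contribution traveling once around $\partial\Omega$ with $j$ reflections is a sum over the families of broken geodesics; invoke Theorem \ref{8 orbit lemma} to see there are exactly $8$ such arcs for $(x,y)\in U_j$ and $t$ near $|\partial\Omega|$, giving the $8$ phases $\Psi_j^k$. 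Fourth, apply stationary phase in the $2j-2$ intermediate reflection parameters (the critical points being the genuine billiard reflection points, nondegenerate by strict convexity for $j\ge j_0$) to collapse the iterated integral to the stated single $\tau$-integral; track the Hessian signature to get the Maslov factors $e^{\pm i\pi/4}$ and the sign $(-1)^j$ from the $j$ Dirichlet reflections.

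The final and most substantial step is computing the principal symbol. Here I would use Hadamard's variational formula for the wave kernel together with the observation that, after the stationary-phase reduction, the leading amplitude is $c\,|\det D^2\Psi_j^k|^{1/2}$ up to the conformal factor, where the relevant Hessian is the mixed second-derivative matrix of the total phase in the "transverse" variables; expanding this determinant in the coordinates $(\mu,\phi,\nu,\theta)$ and using that $\partial\Omega$ corresponds to $\mu=\nu=0$ yields the four-term expression displayed in the theorem. The key identity is that the van Vleck–type determinant for the reduced phase factorizes as the stated combination of $\partial\Psi_j^k/\partial\mu$, $\partial\Psi_j^k/\partial\phi$, $\partial\Psi_j^k/\partial\nu$, $\partial\Psi_j^k/\partial\theta$ and the mixed second derivatives $\partial^2\Psi_j^k/\partial\nu\partial\phi$, etc.; this is a Jacobian computation for the map sending initial to final boundary data along the broken flow. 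I expect this determinant bookkeeping — correctly identifying which $2\times2$ minors survive the stationary-phase contraction and verifying the $f(0,\phi)=1$ normalization at the boundary — to be the main obstacle, since it is exactly the point where \cite{MaMe82} and \cite{Popov1994} left the symbol uncomputed and where the order discrepancy must be resolved. The convexity hypothesis enters decisively both in guaranteeing nondegeneracy of the stationary points (so the reduction is clean) and in the global count of $8$ orbits from Theorem \ref{8 orbit lemma}.
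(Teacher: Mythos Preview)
Your approach is plausible but takes a genuinely different route from the paper's, and the step you correctly flag as ``the main obstacle'' is precisely where the paper's method pays off.

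You propose to build $E_j$ constructively: start from the free Hadamard--Riesz kernel, iterate the method-of-images reflection $j$ times, and then perform stationary phase in the intermediate reflection points to collapse to a single $\tau$-integral with phase $\Psi_j^k$. The amplitude would then be the inverse square root of a $j\times j$ (or larger) Hessian in the reflection parameters, which you would need to identify with the four-term combination of first and mixed second derivatives of $\Psi_j^k$ stated in the theorem. That identification is a nontrivial dynamical identity, and you do not indicate how you would prove it.

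The paper avoids this stationary-phase reduction entirely. It starts from Chazarain's result that $S_j$ is already a Fourier integral operator with canonical relation $\Gamma_\pm^j$, and invokes the known fact (Proposition~\ref{symbol prop}, from \cite{HeZe12}) that its principal symbol on $\Gamma_\pm^j$ is simply the canonical half-density $\frac{(-1)^j}{2\tau i}|dt\wedge dy\wedge d\eta|^{1/2}$. Theorem~\ref{8 orbit lemma} then supplies phase functions $\pm\tau(t-\Psi_j^k)$ parametrizing the eight components of $\Gamma_\pm^j$ near $\Delta\partial\Omega$. The amplitude is extracted not as a stationary-phase Hessian but as the Jacobian of the coordinate change on the Lagrangian: one writes $dt\wedge dy\wedge d\eta$ in the $(\tau,\mu,\phi,\nu,\theta)$ coordinates using the parametrization $t=\Psi_j^k$, $\eta=\pm\tau\, d_{\nu,\theta}\Psi_j^k$, and the four-term expression drops out directly from wedging the differentials (equation~\eqref{Aj}). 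Dividing by the Gelfand--Leray half-density $|dC_\Theta|^{1/2}$ and then passing from $S_j$ to $E_j=\partial_t S_j$ gives the stated order-$1/2$ symbol. The Maslov factor $e^{\pm i\pi/4}$ is computed separately (Section~\ref{Maslov factors}) from the free propagator via Keller's argument, and is shown to be unchanged by reflections---so it does not arise from stationary phase at the reflection points as you suggest.

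In short: your geometric-optics construction would reproduce Chazarain's parametrix from scratch and then face a hard Hessian computation; the paper instead quotes Chazarain and the known half-density symbol, reducing the problem to a clean Jacobian calculation on the Lagrangian. Your mention of Hadamard's variational formula in the symbol step is misplaced---the paper uses that formula only later, for the wave \emph{trace}, not for the propagator parametrix itself.
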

\noindent Theorem \ref{HRP} bears a remarkable resemblance to Hadamard's parametrix for the wave propagator on boundaryless manifolds (see \cite{Hadamardparametrix1}, \cite{Hadamardparametrix1}), where the phase functions $\pm \tau(t - \Psi_j^k(x,y))$ are replaced by $\pm \tau(t - r(x,y))$, with $r(x,y)$ the geodesic distance from $x$ to $y$. In that case, for $x,y$ near the diagonal, there is only one geodesic connecting $y$ to $x$ for small time. In our setting, Theorem \ref{8 orbit lemma} shows that there are exactly $8$ orbits connecting $y$ and $x$ in $j$ reflections and approximately one rotation. The parametrix in Theorem \ref{HRP} is localized strictly away from but near the glancing set. We do not treat contributions of glancing orbits in this paper.
\begin{rema}
For $j \geq j_0(\Omega)$ and $x, y \in \d \Omega$ sufficiently close to the diagonal, there exist only two orbits connecting $y$ to $x$ in $j$ reflections and approximately one rotation. One is in the clockwise direction and the other is in the counterclockwise direction. In particular, when $x = y \in \d \Omega$, there exists a unique geodesic loop based at $x$ of rotation number $1/j$. Restricting the functions $\Psi_j^k(x,y)$ appearing in Theorem \ref{HRP} to the diagonal of the boundary yield the $j$\textit{-loop function}, which we denote by $\Psi_j(q,q)$ (see Definitions \ref{psi jk} and \ref{jloop}).
\end{rema}
\noindent As in the case of boundaryless manifolds, we can use the explicit parametrix in Theorem \ref{HRP} to prove trace formulas. It is known that $E(t)$ has a well defined distributional trace
\begin{align}\label{Wave trace}
\text{Tr} \cos t \sqrt{-\Delta} = \sum_{j = 0}^\infty \cos t \lambda_j,
\end{align}
where $\lambda_j^2$ are the Dirichlet eigenvalues of $-\Delta$. The sum in \eqref{Wave trace} converges in the sense of tempered distributions and has singular support contained in the length spectrum
$$
\text{LSP}(\Omega) = \overline{ \{\text{lengths of periodic billiard trajectories} \} } \cup \{0\},
$$
together with $- \text{LSP}(\Omega)$ (see Section \ref{Computing the wave trace}). Each periodic billiard orbit in $\Omega$ can be classified according to its winding number $m$ and the number of reflections $n$ made at the boundary. Denote the collection of periodic orbits of this type by $\Gamma(m,n)$, normalized so that $m \leq n/2$. $\Gamma(m,n)$ is never empty by a theorem of Birkhoff \cite{Birkhoff}. The length spectrum can be decomposed accordingly as
\begin{align}
\text{LSP}(\Omega) = \bigcup_{m, n \in \N} \text{length}(\Gamma(m,n)) \cup \N |\d \Omega|,
\end{align}
where $|\d \Omega|$ is the length of the boundary. Using the parametrix in Theorem \ref{HRP}, we can prove the following theorem.
\begin{theo}\label{Main theorem}
Assume the conditions from Theorem \ref{HRP} hold and $\Omega$ satisfies the noncoincidence condition:
\begin{align}\label{NCC}
\begin{split}
\text{there exists}\,\,\epsilon_0 > 0\,\, \text{such that} \,\,\bigcup_{\substack{m \geq 2\\ n \geq 1}} \text{length}(\Gamma(m,n)) \cap (|\d \Omega| - \epsilon_0, |\d \Omega|) = \emptyset.
\end{split}
\end{align}
For $j \geq j_0$, define $t_j = \inf_{q \in \d \Omega} \Psi_j(q,q)$ and $T_j = \sup_{q \in \d \Omega} \Psi_j(q,q)$. Then on any sufficiently small neighborhood of $[t_j,T_j]$, $\text{Tr} E(t)$ has the asymptotic expansion 
\begin{align*}
(-1)^j \Re \left\{e^{i \pi / 4} \int_{\d \Omega} \int_{0}^\infty e^{i \xi (t- \Psi_j(q,q) )} a^j(q,\xi)  d\xi dq\right\} + C^\infty(\R),
\end{align*}
where $a^{j}(q,\xi)$ is a classical elliptic symbol of order $1/2$ with principal part given by
\begin{align*}
a_0^j(q,\xi) =   {4}\xi^{1/2} \sin \omega_{j, 1}(q,q) \sin^{1/2} \omega_{j,2}(q,q) \left|\frac{\d \omega_{j,1}}{\d q'} (q,q) \right|^{1/2} X(q) \cdot N(q).
\end{align*}
Here, $X(q)$ is the position vector to a boundary point $q$ with respect to a fixed origin and $N(q)$ is the outward unit normal at $q$. The angles $\omega_{j,1}, \omega_{j,2}$ are the initial and final angles respectively of the unique billiard orbit $\gamma_j(q,q')$ which connects nearby boundary points $q$ and $q'$ in $j$ reflections and approximately one counterclockwise rotation. The function $\Psi_j(q,q')$ is the length of $\gamma_j(q,q')$ and its restriction to the diagonal is the $j$-loop function. 
\end{theo}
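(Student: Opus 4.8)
The plan is to avoid integrating the parametrix of Theorem~\ref{HRP} over the full interior diagonal of $\Omega$ --- which would require extending it away from a neighborhood of $\Delta\d\Omega$ --- and instead to combine Hadamard's variational formula with the dilation $\Omega\mapsto\lambda\Omega$, whose normal boundary velocity is exactly $X(q)\cdot N(q)$. Concretely, recall that a normal perturbation of $\d\Omega$ with velocity $\rho$ changes the Dirichlet eigenvalues $\mu_k=\lambda_k^2$ by $\dot\mu_k=-\int_{\d\Omega}\rho\,|\d_n\phi_k|^2\,ds$; varying \eqref{Wave trace} term by term and noting that the resulting boundary density is $[\d_{n_x}\d_{n_y}S(t,x,y)]_{x=y=q}$ for the sine propagator $S(t)=\sin(t\sqrt{-\Delta})/\sqrt{-\Delta}$, one obtains $\delta\,\mathrm{Tr}\cos t\sqrt{-\Delta}=\tfrac{t}{2}\int_{\d\Omega}\rho(q)\,[\d_{n_x}\d_{n_y}S(t,x,y)]_{x=y=q}\,ds(q)$. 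Taking $\rho=X\cdot N$ and using that $-\Delta_{\lambda\Omega}$ has eigenvalues $\mu_k/\lambda^2$, the left side equals $\frac{d}{d\lambda}\big|_{\lambda=1}\mathrm{Tr}\cos\big((t/\lambda)\sqrt{-\Delta_\Omega}\big)=-t\,\d_t\,\mathrm{Tr}\,E(t)$, hence
\begin{equation*}
\d_t\,\mathrm{Tr}\,E(t)=-\tfrac12\int_{\d\Omega}\bigl(X(q)\cdot N(q)\bigr)\,\bigl[\d_{n_x}\d_{n_y}S(t,x,y)\bigr]_{x=y=q}\,ds(q).
\end{equation*}
Since $S(t)=\d_t^{-1}E(t)$ with $S(0)=0$, Theorem~\ref{HRP} yields a parametrix for each $S_j=\d_t^{-1}E_j$ by dividing the symbols by $\pm i\tau$, the ambiguity being a $t$-independent term that drops out of $\d_t\,\mathrm{Tr}\,E(t)$.

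Next I would localize. Fix $j\ge j_0$ and work on a neighborhood of $[t_j,T_j]$ small enough to miss $[t_{j'},T_{j'}]$ for $j'\ne j$ and the point $|\d\Omega|$ (note $T_j<|\d\Omega|$, as an inscribed polygon is shorter than the boundary), while \eqref{NCC} rules out any length of $\Gamma(m,n)$ with $m\ge2$ there. By Chazarain's decomposition \eqref{Chazarain cosine}, the only term with a singularity over this neighborhood and over $\Delta\d\Omega$ is $E_j$, so modulo $C^\infty$ we may replace $S$ by $S_j$; restricted to the boundary diagonal, the phases $\Psi_j^k$ reduce to the $j$-loop function $\Psi_j(q,q)$ of the Remark following Theorem~\ref{HRP}. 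Substituting this parametrix, applying $\d_{n_x}\d_{n_y}$ (each normal derivative of the phase pulls down a factor $\tau\,\d_n\Psi_j^k$, raising the symbol order by one), restricting to $x=y=q$, integrating against $(X\cdot N)\,ds(q)$, and finally taking an antiderivative in $t$ (which lowers the order by one; the constant of integration is absorbed into the $C^\infty$ error), I obtain $\mathrm{Tr}\,E(t)$ in exactly the claimed oscillatory form --- phase $t-\Psi_j(q,q)$, amplitude $a^j\in S^{1/2}_{\mathrm{cl}}$, prefactor $(-1)^j\Re\{e^{i\pi/4}\cdot\}$.

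It then remains to extract $a_0^j$, evaluating everything at the boundary $\mu=\nu=0$. Here I would use: the first variation of arc length, which gives $\d_{n_x}\Psi_j^k=\sin\omega_{j,1}$ and $\d_{n_y}\Psi_j^k=\sin\omega_{j,2}$ (and the tangential first derivatives as the corresponding cosines) up to a fixed orientation sign; the second variation of arc length --- equivalently the differential of the $j$-fold billiard map --- which expresses the mixed second derivatives of $\Psi_j^k$ on the diagonal through $\d\omega_{j,1}/\d q'$ and the angles, using the generating-function symmetry $\sin\omega_{j,1}\,\d_{q'}\omega_{j,1}=-\sin\omega_{j,2}\,\d_q\omega_{j,2}$; the normalization $f(0,\phi)=1$; and an algebraic identity by which the cubic combination inside $|\cdots|$ in the symbol of $b_{j,k,\pm}$ collapses, on the diagonal, to a constant multiple of $|\d_{q'}\omega_{j,1}|/\sin\omega_{j,2}$ (times the arclength Jacobian). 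Combined with $\d_{n_x}\Psi_j^k\,\d_{n_y}\Psi_j^k=\sin\omega_{j,1}\sin\omega_{j,2}$, this yields the factor $\sin\omega_{j,1}\,\sin^{1/2}\omega_{j,2}\,|\d_{q'}\omega_{j,1}|^{1/2}$, and collecting the remaining universal constants ($-\tfrac12$ from Hadamard, the factor from the clockwise and counterclockwise orbits coalescing on the diagonal, the powers of $i$ from $S=\d_t^{-1}E$ and from the final antiderivative in $t$, and the $2\Re$ from the $\pm$ sum) produces
\begin{equation*}
a_0^j(q,\xi)=4\,\xi^{1/2}\,\sin\omega_{j,1}(q,q)\,\sin^{1/2}\omega_{j,2}(q,q)\,\Bigl|\tfrac{\d\omega_{j,1}}{\d q'}(q,q)\Bigr|^{1/2}\,X(q)\cdot N(q).
\end{equation*}

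The Hadamard-plus-scaling reduction is routine; the real obstacle is the last step --- carrying out the second variation of arc length for the $j$-loop function, recognizing its mixed derivatives as entries of the differential of the iterated billiard map, and verifying that the precise cubic combination of first and second derivatives occurring in Theorem~\ref{HRP} telescopes to the clean product above. Keeping the symbol orders, the powers of $\xi$, and the numerical constant $4$ consistent through the two integrations in $t$ and the diagonal restriction is the other point that demands care.
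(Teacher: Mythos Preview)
Your overall strategy is the paper's: use Hadamard's variational formula for a dilation (normal velocity $X\cdot N$) so that only the boundary restriction of $\nabla_1^\perp\nabla_2^\perp S$ enters, insert the parametrix of Theorem~\ref{HRP}, and compute the symbol on $\Delta\partial\Omega$ via the first and second variation of arc length. The collapse of the cubic combination you anticipate is exactly the paper's Lemma~\ref{Aj w derivatives}, which gives $|A_j(0,\phi,0,\phi)| = |\partial_\theta\omega_{j,1}|/\sin\omega_{j,2}$; combined with $\nabla_1^\perp\Psi\cdot\nabla_2^\perp\Psi = \pm\sin\omega_{j,1}\sin\omega_{j,2}$ this produces the factor $\sin\omega_{j,1}\sin^{1/2}\omega_{j,2}\,|\partial_{q'}\omega_{j,1}|^{1/2}$ as you say.

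You introduce one genuine simplification over the paper. The paper integrates the Hadamard variation over $r\in[r_0,1]$ (equations~\eqref{D1}--\eqref{D3}), then changes variables $\xi=r\tau$ and must separately argue (Lemma~\ref{indep of t}) that the subprincipal terms do not resurface under the $r$-integration. You instead observe that $\mathrm{Tr}\cos t\sqrt{-\Delta_{r\Omega}}=\mathrm{Tr}\cos((t/r)\sqrt{-\Delta_\Omega})$, so the Hadamard output at $r=1$ is literally $-t\,\partial_t\mathrm{Tr}\,E(t)$; a single $t$-antiderivative then finishes. This is correct and cleaner: it bypasses Lemmas~\ref{integrate from delta} and~\ref{indep of t} entirely.

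There is, however, a real gap in your localization step. You assert that only $S_j$ contributes and that all eight phases $\Psi_j^k$ restrict on $\Delta\partial\Omega$ to the $j$-loop function. This is false. Of the eight interior orbits of Theorem~\ref{8 orbit lemma}, only the four in $TN$ and $NT$ configuration ($k=2,3,6,7$) coalesce to a $j$-reflection loop as $(x,y)\to\Delta\partial\Omega$; the $TT$ orbits ($k=1,5$) coalesce to a $(j{+}1)$-loop and the $NN$ orbits ($k=4,8$) to a $(j{-}1)$-loop, so their phases leave $[t_j,T_j]$ and they are killed by your cutoff. Conversely, the $TT$ pieces of $S_{j-1}$ and the $NN$ pieces of $S_{j+1}$ \emph{do} coalesce to $j$-loops and must be included. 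This bookkeeping is the content of the paper's Lemma~\ref{Sj lemma}, and without it you get only four of the eight contributing terms, hence a final constant of $2$ rather than $4$. Moreover, the signs $(-1)^m$ from the parametrix and the $\pm$ in $\nabla^\perp\Psi_m^k$ (equations~\eqref{perp grads}) combine differently for each configuration; it is precisely because the $m=j\pm1$ pieces flip $(-1)^m$ while simultaneously flipping the sign of $\nabla_1^\perp\Psi\cdot\nabla_2^\perp\Psi$ that all eight terms add coherently with sign $(-1)^j$. Once you incorporate Lemma~\ref{Sj lemma} into your second paragraph, the rest of your plan goes through unchanged.
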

\begin{rema}
As the position vector $X$ is chosen with respect to an arbitrary interior point $p$, we can integrate out this symmetry over any measurable subset of the interior in the variable $p$ to obtain a more invariant formula. In particular, we can integrate over open sets, curves and by a limiting argument, the boundary itself in order to obtain a smooth density on $\d \Omega$.  The noncoincidence condition \eqref{NCC} can be weakened and is in particular satisfied for ellipses (see \cite{GuMe79a}) and nearly circular domains (see \cite{HeZe19}).
\end{rema}
\begin{rema}
In several cases, one can evaluate the integral appearing in Theorem \ref{Main theorem} more explicitly. If $L_j$ is an isolated length and the corresponding orbit is nondegenerate or the fixed point set of the time $L_j$ billiard flow is clean in the sense of Bott-Morse (see \cite{DuGu75}), then one can apply the method of stationary phase. The case of a one parameter family of orbits tangent to a rational caustic is discussed below. There is an apparent asymmetry between the incident and reflected angles in the symbol $a_0^j(q,\xi)$, but only periodic orbits contribute in the stationary phase computation. For periodic orbits, we have Snell's law $\omega_{j, 1} = \omega_{j,2}$. The integral formulas in Theorem \ref{Main theorem} are valid regardless of how complicated the structure the length spectrum and corresponding orbits may be.
\end{rema}
\begin{rema}
The noncoincidence condition \ref{NCC} was first formulated by Marvizi and Melrose. It is known that the set of domains satisfying this condition is $C^\infty$ dense in the set of all smooth, bounded strictly convex planar domains and moreover contains a $C^1$ neighborhood of the disk (Proposition 7.2, \cite{MaMe82}). It is also satsified for ellipses (Proposition 4.3, \cite{GuMe79a}). It is believed by the author to be satisfied by all smooth, bounded, strictly convex planar domains.
\end{rema}
\noindent Theorem \ref{Main theorem} provides an explicit formula for the principal term in the parametrix developed in \cite{MaMe82}. In contrast to the methods employed in \cite{MaMe82}, the proof developed in the remainder of this paper uses the explicit parametrix for the wave propagator appearing in Theorem \ref{HRP}. Theorem \ref{Main theorem} also provides clarity on a discrepancy in the literature regarding the order of the wave trace (cf. \cite{Popov1994}, \cite{MaMe82}). Note that in Theorem \ref{Main theorem}, no assumptions are made on the nondegeneracy of orbits. If the length spectrum has high multiplicity, the wave trace is in general quite complicated. However, when periodic orbits come in a one parameter family corresponding to a caustic, we have the following: 
\begin{theo}\label{Rational caustic theorem}
Suppose $\Omega$, $j \geq j_0$ are as in Theorem \ref{Main theorem} and $\mathcal{C}$ is a caustic for $\Omega$ of rotation number $1/j$. If periodic orbits tangent to $\mathcal{C}$ have length $L_j$, then near $t = L_j$, $\text{Tr} E(t)$ has the leading asymptotic
\begin{align*}
c_j \Re \left\{e^{i\pi/4} (t- L_j - i 0)^{-3/2}\right\},
\end{align*}
where $c_j$ is a wave invariant given by the formula
\begin{align*}
c_j = (-1)^{j+1} {4} \int_{\d \Omega} \sin^{3/2} \omega_{j,1}(q,q) \left|\frac{\d \omega_{j,1}}{\d q'}(q,q)\right|^{1/2} X(q) \cdot N(q) dq.
\end{align*}
In this case, $\omega_{j,1}(q,q) = \omega_{j,2}(q,q)$ is the measure of the angle of incidence for the unique periodic orbit of rotation number $1/j$ based at $q \in \d \Omega$.
\end{theo}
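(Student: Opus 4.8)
The plan is to deduce Theorem~\ref{Rational caustic theorem} directly from Theorem~\ref{Main theorem}: a rational caustic $\mathcal{C}$ of rotation number $1/j$ forces the $j$-loop function $\Psi_j(q,q)$ to be \emph{constant} along $\d\Omega$, and this constancy is exactly what promotes the wave-trace singularity to order $-3/2$. Concretely, I would first record the billiard geometry forced by the caustic, then substitute into the oscillatory integral of Theorem~\ref{Main theorem}, pull the now $q$-independent phase outside the boundary integral, and evaluate the remaining one-dimensional Fourier integral in the spectral variable.

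\emph{Step 1: the loop function is constant and the two base-point angles agree.} Since $\mathcal{C}$ is a convex caustic lying strictly inside $\Omega$, through every $q\in\d\Omega$ there passes a periodic billiard orbit $O(q)$ tangent to $\mathcal{C}$: following the billiard flow from $q$ along either of the two tangent lines from $q$ to $\mathcal{C}$ produces a trajectory that stays tangent to $\mathcal{C}$, and because $\mathcal{C}$ has rotation number $1/j$ this trajectory closes up after $j$ reflections --- including at $q$, where the reflection law then holds because the two edges of $O(q)$ at $q$ are precisely the two tangent lines from $q$ to $\mathcal{C}$. Consequently the outgoing angle $\omega_{j,1}(q,q)$ and the incoming angle $\omega_{j,2}(q,q)$ at $q$ coincide, both being equal to the angle of incidence of the orbit of rotation number $1/j$ based at $q$. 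For $j\ge j_0$ the geodesic loop of rotation number $1/j$ based at $q$ is unique (the Remark after Theorem~\ref{HRP}), hence equals $O(q)$, so $\Psi_j(q,q)=\operatorname{length}(O(q))=L_j$ for all $q$, using the hypothesis that every periodic orbit tangent to $\mathcal{C}$ has length $L_j$. In particular $t_j=T_j=L_j$, so Theorem~\ref{Main theorem} applies on a neighborhood of $L_j$, and \eqref{NCC} rules out interference from other lengths there.

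\emph{Step 2: reduction and the Fourier integral.} Substituting $\Psi_j(q,q)\equiv L_j$ and $\omega_{j,1}(q,q)=\omega_{j,2}(q,q)$ into Theorem~\ref{Main theorem} and pulling $e^{i\xi(t-L_j)}$ out of the $\d\Omega$-integral gives
\[
\text{Tr}\,E(t)=(-1)^j\Re\Big\{e^{i\pi/4}\int_0^\infty e^{i\xi(t-L_j)}A^j(\xi)\,d\xi\Big\}+C^\infty(\R),\qquad A^j(\xi):=\int_{\d\Omega}a^j(q,\xi)\,dq,
\]
where $A^j$ is again a classical elliptic symbol of order $1/2$ on $\R$ whose principal part, after using $\omega_{j,1}=\omega_{j,2}$ on the diagonal, is $\big(4\int_{\d\Omega}\sin^{3/2}\omega_{j,1}(q,q)\,\big|\tfrac{\d\omega_{j,1}}{\d q'}(q,q)\big|^{1/2}X(q)\cdot N(q)\,dq\big)\,\xi^{1/2}$. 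Writing $A^j$ as this principal term plus a symbol of order $-1/2$, and using $\int_0^\infty e^{i\xi s}\xi^{1/2}\,d\xi=\Gamma(3/2)\,e^{3i\pi/4}(s+i0)^{-3/2}$ --- the lower-order terms yielding distributions of order $\ge -1/2$ (strictly less singular) and the behavior of $A^j$ near $\xi=0$ contributing only a smooth function of $s$ --- one finds that near $s=t-L_j=0$ the trace is, to leading order, a constant multiple of the homogeneous distribution $(t-L_j-i0)^{-3/2}$. Collecting the outer factor $e^{i\pi/4}$, the phase $e^{3i\pi/4}$ from the integral, and the overall sign $(-1)^j$, and using that $\Re$ is insensitive to the sign of the $i0$-regularization, one reads off the sign $(-1)^{j+1}$ and the stated leading asymptotic $c_j\,\Re\{e^{i\pi/4}(t-L_j-i0)^{-3/2}\}$, with $c_j$ the displayed boundary integral.

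\emph{Main obstacle.} Once Theorem~\ref{Main theorem} is in hand the argument is short, and the conceptual heart is Step 1: making rigorous that the rational caustic forces the parametrix's geodesic loops to be the genuine periodic orbits of the caustic family, so that $\Psi_j$ is constant on the diagonal and the incidence and reflection angles at the base point agree. The one point in Step 2 requiring genuine care is the bookkeeping of the phase and $\Gamma$-factors in the Fourier integral, needed to pin down $c_j$ \emph{exactly} --- including the sign $(-1)^{j+1}$ and the $e^{i\pi/4}$ retained inside the real part --- rather than only up to an unspecified constant.
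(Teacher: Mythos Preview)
Your proposal is correct and follows essentially the same route as the paper's own proof: both arguments observe that the rational caustic forces $\Psi_j(q,q)\equiv L_j$ and $\omega_{j,1}(q,q)=\omega_{j,2}(q,q)$, substitute these into Theorem~\ref{Main theorem}, separate the now $q$-independent phase from the boundary integral, and evaluate the resulting one-dimensional Fourier integral of $\xi^{1/2}$ on the half-line via the standard formula for the transform of $\chi_+^a$. Your Step~1 is if anything slightly more explicit than the paper's (you invoke uniqueness of the $j$-loop from the Remark after Theorem~\ref{HRP}, whereas the paper simply asserts that every boundary point is a critical point of the loop function), and your candid acknowledgment that the exact phase/$\Gamma$-factor bookkeeping is the delicate point is well placed---the paper's own proof is terse here and contains minor typos (it writes $\chi_+^{3/2}$ where $\chi_+^{1/2}$ is meant, and the sign of the Maslov phase in the displayed ``almost caustic'' formula does not match Theorem~\ref{Main theorem}).
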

\noindent While KAM theory provides the existence of irrational caustics, it is shown in \cite{KaloshinZhangRationalCaustics} that for a fixed $j$, the set of all smooth convex domains possessing a rational caustic of rotation number $1/j$ is polynomially dense in the variable $1/j$ within the collection of all smooth strictly convex domains, equipped with the $C^\infty$ topology. Exponential density is also proven in the analytic category. As ellipses satisfy the noncoincidence condition \eqref{NCC} (see \cite{GuMe79a}) and are known to be completely integrable with confocal conic sections as caustics, we have the following corollary to Theorem \ref{Rational caustic theorem}.
\begin{coro}\label{Ellipse corollary}
For an ellipse $\Omega$ given by
$$
\Omega = \left\{(x,y): \frac{x^2}{a^2} + \frac{y^2}{b^2} \leq 1\right\},
$$
and $L_j \in \text{LSP}(\Omega)$ sufficiently close to $|\d \Omega|$ corresponding to the length of billiard orbits of rotation number $1/j$, the wave invariants in Theorem \ref{Rational caustic theorem} are given by
\begin{align*}
c_j = \int_0^{2\pi} \frac{(-1)^{j+1} 2 ab \sin \omega_j \sqrt{a^2 \cos^2 \phi + b^2 \sin^2 \phi} d\phi }{\sqrt{\cos \omega_j (a^2 \sin^2 \phi + b^2 \cos^2\phi) (b^2 + (a^2 - b^2) \sin^2 \phi) G(\zeta_j) \sqrt{1 - k_{\zeta_j}^2 \sin^2 \phi}}}.
\end{align*}
Here, $\zeta_j \in [0,b)$ is the parameter of the confocal ellipse
$$
\mathcal{C}_{\zeta_j} = \left\{(x,y): \frac{x^2}{a^2 - \zeta_j^2} + \frac{y^2}{b^2 - \zeta_j^2} = 1\right\},
$$
to which the orbits of length $L_j$ are tangent and $k_{\zeta_j}$ is given by
$$
k_{\zeta_j}^2 = \frac{a^2 - b^2}{a^2 - {\zeta_j}^2}.
$$
The parameter $\zeta_j$ depends on the rotation number ${1/j}$ and is defined implicitly by the equation
\begin{align*}
\frac{1}{j} = \frac{F(\arcsin \zeta_j/b; k_{\zeta_j})}{2 K(\zeta_j)},
\end{align*}
where $F(s; k)$ is the elliptic integral
\begin{align*}
\int_0^{s} \frac{d \tau}{\sqrt{1 - k^2 \sin^2 \tau }}
\end{align*}
and $K(\zeta_j) = F(\pi/2; k_{\zeta_j})$. The function $G(\zeta_j)$ is defined by
\begin{align*}
\frac{-k_{\zeta_j}^2}{(a^2 - \zeta_j^2)} \int_0^{2\pi} \frac{\sin^2\tau d\tau}{(1 - k_{\zeta_j}^2 \sin^2 \tau)^{3/2}} + (2j+2) \frac{d}{d \zeta^2} F(\arcsin \zeta / b; k_{\zeta})\Big|_{\zeta = \zeta_j},
\end{align*}
and $\omega_j = \omega_j(\phi)$ are the angles of reflection for orbits tangent to $\mathcal{C}_{\zeta_j}$, given implicitly by the equation
\begin{align*}
\zeta_j^2 = \sin^2 \omega_j (b^2 + (a^2 - b^2)\sin^2 \phi).
\end{align*}
\end{coro}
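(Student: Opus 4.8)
The plan is to evaluate the abstract wave invariant
\[
c_j=(-1)^{j+1}4\int_{\partial\Omega}\sin^{3/2}\omega_{j,1}(q,q)\,\Bigl|\tfrac{\partial\omega_{j,1}}{\partial q'}(q,q)\Bigr|^{1/2}X(q)\cdot N(q)\,dq
\]
of Theorem~\ref{Rational caustic theorem} by computing its three ingredients --- the geometric pairing $X(q)\cdot N(q)\,dq$, the angle of incidence $\omega_{j,1}(q,q)=\omega_j(\phi)$, and the diagonal twist $\bigl|\partial_{q'}\omega_{j,1}(q,q)\bigr|$ --- in the confocal coordinates that integrate the elliptic billiard. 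Parametrizing $\partial\Omega$ by $q(\phi)=(a\cos\phi,b\sin\phi)$ with the center as origin (admissible by the remark after Theorem~\ref{Main theorem}), one finds $dq=\sqrt{a^2\sin^2\phi+b^2\cos^2\phi}\,d\phi$, $N(\phi)=(b\cos\phi,a\sin\phi)/\sqrt{a^2\sin^2\phi+b^2\cos^2\phi}$, hence $X(q)\cdot N(q)\,dq=ab\,d\phi$: the first ingredient is settled and all remaining $\phi$-dependence must come from the other two. It is also worth recording the generating-function identity $\partial_q\Psi_j=-\cos\omega_{j,1}$, which gives $|\partial_{q'}\omega_{j,1}|^{1/2}=|\partial^2_{q,q'}\Psi_j|^{1/2}/\sin^{1/2}\omega_{j,1}$ and reduces the integrand to $\sin\omega_j\,|\partial^2_{q,q'}\Psi_j|^{1/2}\,ab$ (matching the $\sin\omega_j$ in the asserted formula); so the real task is the mixed Hessian of the $j$-loop generating function on the diagonal.

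For the angle function I would use the classical structure of the elliptic billiard: every trajectory is tangent to a fixed confocal conic, and the rotational orbits (rotation number in $(0,\tfrac12)$) are tangent to a confocal ellipse $\mathcal C_\zeta$, $\zeta\in[0,b)$. The Joachimsthal-type first integral, expressed in these coordinates, says that the angle $\omega$ at the boundary point $q(\phi)$ of the trajectory tangent to $\mathcal C_\zeta$ satisfies $\zeta^2=\sin^2\omega\,(b^2+(a^2-b^2)\sin^2\phi)$ --- precisely the implicit relation for $\omega_j(\phi)$ once $\zeta=\zeta_j$. To pin down $\zeta_j$, I would compute the rotation number $\rho(\zeta)$ as the per-reflection increment of the angle in action--angle variables: linearizing the billiard map along $\mathcal C_\zeta$ and integrating the resulting $1$-form over a period yields $\rho(\zeta)=F(\arcsin(\zeta/b);k_\zeta)/\bigl(2K(\zeta)\bigr)$ with $k_\zeta^2=(a^2-b^2)/(a^2-\zeta^2)$; imposing $\rho(\zeta_j)=1/j$ gives the stated implicit definition of $\zeta_j$, and $\rho$ increasing from $0$ as $\zeta$ grows from $0$ guarantees a unique smooth solution for all $j\ge j_0$.

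The main step is $\partial^2_{q,q'}\Psi_j|_{q'=q}$. The orbit $\gamma_j(q,q')$ lies on a confocal caustic $\mathcal C_{\zeta(q,q')}$ with $\zeta(q,q)=\zeta_j$, and $\zeta(\cdot,\cdot)$ is smooth near the diagonal by the implicit function theorem, which applies because the twist $\rho'(\zeta_j)\ne0$ (strict monotonicity of the elliptic-integral ratio). Passing to action--angle variables, holding $q$ fixed, and differentiating in $q'$ the closing-up condition ``the $j$-th reflection point of the trajectory launched from $q$ toward $\mathcal C_\zeta$ equals $q'$'', the standard telescoping on the periodic orbit collapses the $j$-fold monodromy to a single factor proportional to $j\,\rho'(\zeta_j)$ times elementary Jacobians relating $\phi$, $\zeta$, $\omega$ (obtained by differentiating the Joachimsthal relation) and the arclength parameters. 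Re-expressing the derivatives of $K$ and $F$ via $\partial_{k^2}(1-k^2\sin^2\tau)^{-1/2}=\tfrac12\sin^2\tau\,(1-k^2\sin^2\tau)^{-3/2}$ and $d(k_\zeta^2)/d(\zeta^2)=k_\zeta^2/(a^2-\zeta^2)$ then identifies the factor $G(\zeta_j)$ --- essentially $2K$ times the $\zeta^2$-derivative of the closing relation $j\rho(\zeta)-1$ --- as exactly the displayed sum of the integral term (coming from $dK/d(\zeta^2)$) and the $(2j+2)\,\tfrac{d}{d\zeta^2}F(\arcsin(\zeta/b);k_\zeta)$ term. Substituting $\sin\omega_j$, the resulting formula for $|\partial^2_{q,q'}\Psi_j|^{1/2}$ and $X\cdot N\,dq=ab\,d\phi$ into $c_j$ and simplifying gives the claimed integral, with the numerator $\sqrt{a^2\cos^2\phi+b^2\sin^2\phi}=|X(\phi)|$ appearing from the coordinate Jacobians.

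I expect the twist computation to be the main obstacle. One must keep track of the correct branch of the two tangent lines from $q$ to $\mathcal C_{\zeta_j}$ (clockwise versus counterclockwise), chain together the several changes of variable among $(\phi,\ \text{arclength},\ \omega,\ \zeta,\ I)$, and --- most delicately --- extract the exact constants (the factor $4$, the sign $(-1)^{j+1}$, and the $(2j+2)$ rather than $2j$ in $G$) from the off-by-one bookkeeping between ``$j$ reflections'', the number of intermediate vertices, and the winding number $1$. Everything else is a direct substitution or a standard elliptic-function identity, and the uniformity in $j\ge j_0$ is automatic once the twist $\rho'(\zeta_j)\ne0$ is in hand.
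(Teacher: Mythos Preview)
Your proposal is correct and follows essentially the same route as the paper: apply Theorem~\ref{Rational caustic theorem} to the ellipse (noting that confocal ellipses are rational caustics), compute $X\cdot N\,dq=ab\,d\phi$ from the standard parametrization, and evaluate the angular derivative $\partial\omega_{j,1}/\partial q'$ via the Joachimsthal first integral and action--angle coordinates expressed through elliptic integrals. The paper's own argument is terser, deferring the entire elliptic-function computation of $\partial\omega/\partial\theta$ to Section~5.5 of \cite{Vig18}; you have simply sketched what that computation looks like, including the identification of $G(\zeta_j)$ with the $\zeta^2$-derivative of the closing relation, which the paper does not spell out.
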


\begin{rema}
Analagous formulas to those appearing in Theorem \ref{Main theorem}, Theorem \ref{Rational caustic theorem} and Corollary \ref{Ellipse corollary} can also be proved for the Neumann and Robin wave traces as well. The formulas are less succinct but can be easily reproduced by keeping track of boundary terms in Section \ref{Computing the wave trace}. Alternatively, an earlier version of this paper used Hadamard type variational formulas which were derived for Robin boundary conditions in the author's previous work \cite{Vig18}.
\end{rema}


\subsection{Schematic Outline}
The proofs of Theorem \ref{Main theorem}, Theorem \ref{Rational caustic theorem} and Corollary \ref{Ellipse corollary} use techniques from \cite{Vig18}, which are reviewed throughout the paper. In Section \ref{Background}, we review relevant background on the inverse spectral problem, i.e. determining geometric information from the Laplace spectrum. The first step in the proof of Theorem \ref{Main theorem} is to generalize the Hadamard-Riesz type parametrix for the wave propagator constructed in \cite{Vig18} for ellipses to arbitrary bounded domains with strictly convex, smooth boundary as in Theorem \ref{HRP}. This requires a dynamical classification (Theorem \ref{8 orbit lemma}) of the cardinality and structure of all billiard orbits connecting interior points with a fixed number of reflections, analagous to Lemma 5.2 in \cite{Vig18}. Section \ref{Billiards} introduces language from dynamical systems necessary to describe the billiard (or broken bicharacteristic) flow, which later appears in the canonical relations of the wave propagator. The proof of Theorem \ref{8 orbit lemma} is relegated to Section \ref{Proof of 8 orbit lemma}, where it is first done in the simple case of the Friedlander model and then broken up into several intermediate lemmas using Lazutkin coordinates for the general case. This material is largely independent from the rest of the paper and is of separate interest from the perspective of dynamical billiards, irrespective of applications to spectral theory. In Section \ref{A parametrix for the wave propagator}, the length functionals corresponding to these orbits allow us to cook up explicit phase functions which parametrize the canonical relations for the wave propagator $e^{i t \sqrt{-\Delta}}$, which is a Fourier integral operator microlocally away from the tangential rays. We then carry out the analysis leading to the microlocal parametrix appearing in Theorem \ref{HRP}.  Construction of a parametrix for the wave propagator in the interior, with principal symbol given explicitly in terms of geometric data, microlocally near transversally reflected, nearly glancing rays appears to be new in the literature, with the exception of the author's previous work \cite{Vig18} in the special case of an ellipse. In Section \ref{Computing the wave trace}, an integration by parts allows us to compute the localized in time wave trace in terms of a boundary integral. In this case we can argue that only a select few of the billiard orbits in Theorem \ref{8 orbit lemma} contribute to the wave trace. Appropriate Maslov factors on each branch of the canonical relations are also computed here. As the order of the principal symbol computed in Section \ref{Computing the wave trace} appears to contradict other works in the literature, Section \ref{Check on the order} provides an auxilliary confirmation via stationary phase that the order derived here is indeed correct. To our knowledge, this paper contains the first explicit formulas for the principal symbol of the wave trace associated to a convex billiard table near the length of the boundary.

\section{Background}\label{Background}
\noindent The inverse spectral problem has a long history, dating back to Kac in 1966 \cite{Kac66}, who asked the famous question ``can one hear the shape of a drum?'' Mathematically, this corresponds to uniquely determining a domain $\Omega$ from the spectrum of its Dirichlet, Neumann or Robin Laplacian. For bounded domains, the spectrum is purely discrete, consisting of eigenvalues $\lambda_j^2$ satisfying
\begin{align}\label{inverse problem}
\begin{cases}
-\Delta u_j = \lambda_j^2 u_j & x \in \Omega\\
Bu_j = 0 & x \in \d \Omega,
\end{cases}
\end{align}
where $u_j$ are smooth eigenfunctions on $\Omega$ and $B$ is either the restriction operator (Dirichlet boundary conditions), normal differentiation (Neumann boundary conditions) or normal differentiation plus a prescribed function on $\d \Omega$ (Robin boundary conditions). A variety of approaches in local and global harmonic analysis have been taken to prove partial results in the direction of \cite{Kac66}. One particularly useful strategy is to use the wave group to deduce spectral information about the underlying geometric space, usually a Riemannian manifold. The motivation behind this approach stems from Duistermaat and H\"ormander's propagation of singularities theorem, which says that singularities of solutions to the wave equation propagate along (possibly broken) bicharacteristics, which are lifts to $T^* (\R \times \Omega)$ of geodesic or billiard orbits. As linear waves can be superimposed, constructive interference is most pronounced along geodesics which are traversed infinitely often, i.e. periodic orbits.  On the trace side, this is reflected in the Poisson relation:
\begin{align}\label{Poisson relation}
\text{SingSupp Tr} e^{i t \sqrt{-\Delta} } \subset \pm \overline{\text{LSP} (\Omega)} \cup \{0\},
\end{align}
where the lefthand side is the distributional trace of the half wave propagator (see Section \ref{Computing the wave trace}) and the righthand side is the length spectrum of $\Omega$ (the closure of all lengths of periodic geodesic or billiard orbits together with $\{0\}$). This is proven in \cite{AndersonMelrose} for smooth, strictly convex planar domains with boundary and \cite{PeSt92} for more general bounded domains. In particular, the formula \eqref{Poisson relation} generalizes the Poisson summation formula on the torus $\R^n/\Z^n$ from elementary Fourier analysis (see \cite{Ur98}, for example).
\\
\\
Asymptotic formulas near the singularities are given by the Selberg trace formula in the case of hyperbolic surfaces \cite{Sel56}, the Duistermaat-Guillemin trace theorem for general smooth manifolds under a dynamical nondegeneracy condition \cite{DuGu75}, and a Poisson summation formula for strictly convex bounded planar domains due to Guillemin and Melrose \cite{GuMe79b}. However, since these trace formulas involve sums over all periodic orbits of a given length, it is possible that the contributions of distinct orbits having the same length could cancel out and the wave trace is actually smooth near a point in the length spectrum. We say that the length $L \in \R$ of a periodic orbit $\gamma$ is simple if up to time reversal ($t \mapsto -t$), $\gamma$ is the unique periodic orbit of length $L$. Without length spectral simplicity, there is no way to deduce Laplace spectral information from the length spectrum alone. It is shown in \cite{PeSt92} that generically, smooth convex domains have simple length spectrum associated to only nondegenerate periodic orbits. In that case, the following theorem holds:
\begin{theo}[\cite{GuMe79b}, \cite{PeSt17}]
Assume $\gamma$ is a nondegenerate periodic billiard orbit in a bounded, strictly convex domain with smooth boundary and $\gamma$ has length $L$ which is simple. Then near $L$, the even wave trace has an asymptotic expansion
\begin{align}
\text{Tr} \cos t \sqrt{-\Delta} \sim \Re \left\{ a_\gamma (t - L + i0)^{-1} + \sum_{k = 0}^\infty a_{\gamma k} (t - L + i0)^{k} \log(t - L + i0) \right\},
\end{align}
where the coefficients $a_{\gamma k}$ are the wave invariants associated to $\gamma$.
\end{theo}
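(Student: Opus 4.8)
The plan is to localize near $\gamma$ and reduce the singularity of $\text{Tr}\cos t\sqrt{-\Delta}$ at $t=L$ to a stationary phase computation, following the Duistermaat--Guillemin argument but substituting the reflected--wave parametrix \eqref{Chazarain cosine} for the boundaryless Hadamard parametrix. Since $L$ is simple, the Poisson relation \eqref{Poisson relation} together with propagation of singularities along broken bicharacteristics shows that near $t=L$ the only source of singularity is $\gamma$ and its time reversal $\gamma^{-1}$. Choosing a pseudodifferential cutoff $\chi$ microlocally supported in a small conic neighborhood of the lift of $\gamma\cup\gamma^{-1}$ to $S^*\Omega$ and a temporal cutoff $\rho$ supported near $L$, it suffices to analyze the distribution $t\mapsto\text{Tr}\big(\rho(t)\,\chi\,E(t)\big)$; if $\gamma$ makes $n$ reflections, only $E_n$ in \eqref{Chazarain cosine} has wavefront set meeting $\supp\chi$ near $t=L$, so modulo $C^\infty$ we may replace $E(t)$ by $E_n(t)$.

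Next, I would represent $\chi E_n(t,x,y)$ near $\gamma$ as an oscillatory integral $\int e^{i\varphi(t,x,y,\theta)}a(t,x,y,\theta)\,d\theta$ whose phase parametrizes the canonical relation of the $n$--fold iterated billiard flow; between reflections $\gamma$ lies in the open interior and, by strict convexity, meets $\partial\Omega$ transversally, so this relation is the clean composition of $n$ copies of the free half--wave FIO with the boundary reflection, away from the glancing set. Restricting to $x=y$ and integrating over $\Omega$,
\begin{align*}
\text{Tr}\big(\rho(t)\,\chi\,E_n(t)\big) = \int \rho(t)\, e^{i\varphi(t,x,x,\theta)}\, \wt a(t,x,\theta)\, dx\, d\theta .
\end{align*}
For $t$ near $L$ the stationary set of $\varphi$ in $(x,\theta)$ is exactly the circle traced by $\gamma$, and the Hessian of $\varphi$ transverse to this circle is nondegenerate precisely because $\gamma$ is a nondegenerate periodic orbit, i.e. $\det(I-P_\gamma)\neq 0$, where $P_\gamma$ is the linearized Poincar\'e billiard map whose reflection blocks carry the boundary curvature. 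Stationary phase in the transverse variables then collapses the integral to a one--dimensional oscillatory integral in a frequency $\tau$ dual to $t-L$, with a classical symbol $b(t,\tau)\sim\sum_{k\geq 0}b_k(t)\,\tau^{-k}$; equivalently $\text{Tr}(\rho\chi E_n)$ is a Lagrangian distribution on $\R_t$ conormal to $\{t=L\}$ with leading singularity $(t-L+i0)^{-1}$.

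Finally, one reads off the coefficients. The $\tau^0$ term of $b$ produces the $(t-L+i0)^{-1}$ contribution with, up to the standard normalization,
\begin{align*}
a_\gamma = \frac{L_\gamma^{\#}\, e^{i\pi\sigma_\gamma/2}}{2\pi\,|\det(I-P_\gamma)|^{1/2}},
\end{align*}
where $L_\gamma^{\#}$ is the primitive length, $|\det(I-P_\gamma)|^{-1/2}$ is the transverse Hessian factor from stationary phase, and $\sigma_\gamma\in\Z$ is the Maslov index of $\gamma$ (each reflection contributing to it) read off from the signature of the total phase Hessian; combining $\gamma$ with $\gamma^{-1}$ and using reality of the spectrum produces the outer $\Re\{\cdot\}$. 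The remaining terms $\tau^{-k}$, $k\geq 1$, integrate with the $i0$ regularization to $(t-L+i0)^{\,k-1}\log(t-L+i0)$, yielding the $a_{\gamma k}$; by construction each $a_{\gamma k}$ is a universal polynomial in the jets of the boundary along $\gamma$ and in the entries of $P_\gamma$, hence these are exactly the billiard wave invariants of $\gamma$.

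The main obstacle is the second step: verifying that the iterated reflected--wave parametrix composes cleanly near $\gamma$ and, above all, identifying the transverse stationary--phase Hessian with $I-P_\gamma$ for the genuine, boundary--curvature--corrected linearized billiard map, while checking that the glancing set is irrelevant. This is exactly where strict convexity and transversality of $\gamma$ to $\partial\Omega$ enter, and where one needs the reflected parametrix \eqref{Chazarain cosine} (or Theorem \ref{HRP} in the once--around case) rather than the closed--manifold Hadamard parametrix; the Maslov bookkeeping through the reflections is the other delicate point, and it is what makes the formula for $a_\gamma$ nontrivial.
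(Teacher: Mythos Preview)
The paper does not prove this statement; it is quoted in Section~\ref{Background} as a known background result, with attribution to \cite{GuMe79b} and \cite{PeSt17}, and no proof is given. There is therefore no ``paper's own proof'' to compare against.

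That said, your sketch is essentially the standard Guillemin--Melrose argument and is correct in outline: microlocalize near $\gamma$ using Chazarain's parametrix $E_n$, restrict to the diagonal and integrate, apply stationary phase transversally to the closed orbit where nondegeneracy of $\gamma$ furnishes $\det(I-P_\gamma)\neq 0$ as the nonvanishing transverse Hessian, and then read off the conormal expansion at $t=L$. Your identification of the leading coefficient with $L_\gamma^{\#}\,|\det(I-P_\gamma)|^{-1/2}$ times a Maslov factor is the right answer, and your remark that strict convexity keeps $\gamma$ away from the glancing set (so the reflected parametrix is a genuine FIO there) is exactly the point. One small caution: your sentence ``the remaining terms $\tau^{-k}$ integrate with the $i0$ regularization to $(t-L+i0)^{k-1}\log(t-L+i0)$'' is correct in conclusion but glosses over the mechanism --- the logarithms arise because the distributions $\chi_+^{-k}$ for $k\in\N$ sit at the poles of $\Gamma(a+1)$ and require the standard Hadamard regularization, which is where the $\log$ appears in the Fourier transform. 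This is routine (see e.g.\ \cite{Ho90}, Chapter~3) but worth stating if you are writing out a full proof.
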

\noindent Calculations of the wave invariants associated to dynamically convenient orbits have proved extremely useful in the inverse spectral problem associated to \eqref{inverse problem}. For example, the case of rotationally symmetric metrics on $S^2$ is analyzed in \cite{Zelditch2}. In \cite{Zelditch3}, \cite{Zelditch4} and \cite{Zelditch5}, the wave invariants associated to bouncing ball orbits are explicitly calculated using Feynmann diagrams to analyze the stationary phase computation from which Balian-Bloch (resolvent) formulas are derived (see also \cite{Zelditch1}). Under mild dynamical conditions and some additional axial symmetry assumptions, these coefficients can be used to determine the Taylor series of a local boundary parametrization. In particular, this allows one to deduce that such domains are spectrally determined amongst a rich class of analytic, symmetric domains. Microlocal parametrices near the glancing set have also been constructed in \cite{AndersonMelrose}, \cite{PeSt17}, \cite{Eskin} for demonstrating propagation of singularities, \cite{MT1}, \cite{MT2} in the context of scattering by a convex obstacle and more recently \cite{ILP1}, \cite{ILP2} for proving dispersive estimates. However, there is a lack of precise information on their principal symbols in terms of geometric data and to the author's knowledge, the contributions these tangential rays to the wave trace have not yet been considered in the context of inverse problems.
\\
\\
The wave invariants have also proved useful in variational inverse problems, going back to the seminal papers \cite{GK1} and \cite{GK2}, where the authors proved spectral rigidity for closed manifolds with negative sectional curvature. This was recently generalized to Anosov surfaces in \cite{UPS}. In the setting of bounded domains, it is proved in \cite{HeZe12} that ellipses with Dirichlet/Neumann boundary conditions are infinitessimally spectrally rigid through smooth domains with the same symmetries. These results as well as spectral determination of the Robin function in \cite{GuMe79a} were generalized to Liouville billiard tables of classical type in \cite{PopTop03} and \cite{PopTop12}. They were also extended to the Robin setting in \cite{Vig18}, where both the domain and Robin function were allowed to vary simultaneously. A recent breakthrough was obtained in \cite{HeZe19}, where the authors showed that ellipses of small eccentricity are spectrally determined. Thorough surveys of the inverse spectral problem are contained in \cite{ZelditchSurvey2014}, \cite{ZelditchSurvey2}, \cite{HezariDatchevSurvey} and \cite{Melrosesurvey}.
\\
\\
The present article is inspired by \cite{MaMe82}, where the following theorem is proved:
\begin{theo}[\cite{MaMe82}]\label{Melrose theorem}
If $\Omega$ is a bounded and strictly convex planar region, there exists $N = N(\Omega)$ such that if $j > N$, then the contribution $\widehat{\sigma}_j$ of $E_j$ to $\widehat{\sigma}_D$ is of the form
\begin{align}\label{Melrose 1}
\widehat{\sigma}_j = \frac{1}{2\pi} \int_0^\infty \int_{0}^L e^{i (t - \mu_j(s)) \tau } a(s,\tau) ds d\tau, 
\end{align}
where in terms of an arclength coordinate $s$ on $\d \Omega$,
\begin{align*} 
\mu_j(s) = \Psi_j(s,s')\big|_{s = s'},
\end{align*}
with $\Psi(s,s') = L(\mathfrak{g})$, with $\mathfrak{g}$ the length of a $j$-fold geodesic from $s$ to $s'$ and $a_j$ is periodic in $s$, classical and elliptic of order zero, with principal part of the form $e^{i\pi r_j/4} \alpha_j(s)$, $\alpha_j(s) > 0$.
\end{theo}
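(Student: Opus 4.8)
The plan is to derive Theorem \ref{Melrose theorem} as a corollary of Theorem \ref{HRP} rather than reprove it from scratch, since the hard analytic content lives in the parametrix. Recall that $\widehat\sigma_D$ denotes the (Fourier transform of the) distributional wave trace and $\widehat\sigma_j$ the contribution of the $j$-reflection piece $E_j$ from the Chazarain decomposition \eqref{Chazarain cosine}. Thus $\widehat\sigma_j = \operatorname{Tr} E_j(t)$, obtained by restricting the Schwartz kernel $E_j(t,x,y)$ to the diagonal $x=y$ and integrating over $\Omega$. By propagation of singularities (the Chazarain--Anderson--Melrose theory already invoked in the introduction), for $t$ in a small neighborhood of $|\d\Omega|$ and $j \geq j_0(\Omega)$ the only periodic orbits contributing are the geodesic loops of rotation number $1/j$, which are based at boundary points; this is exactly the microlocal regime where Theorem \ref{HRP} gives an explicit formula for $E_j(t,x,y)$ on a tubular neighborhood $U_j$ of $\Delta\d\Omega$. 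Away from $U_j$ the kernel $E_j$ is smooth near $t = |\d\Omega|$, so it contributes a $C^\infty(\R)$ term and may be discarded.

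First I would substitute the parametrix from Theorem \ref{HRP} into $\operatorname{Tr} E_j(t) = \int_\Omega E_j(t,x,x)\, dx$ (more precisely, integrate over the part of $\Omega$ in $U_j$), obtaining a sum over the sign $\pm$ and over $k = 1,\dots,8$ of oscillatory integrals
\[
\sum_\pm \sum_{k=1}^8 (-1)^j e^{\pm i\pi/4} \int_\Omega \int_0^\infty e^{\pm i\tau(t - \Psi_j^k(x,x))} b_{j,k,\pm}(\tau,x,x)\, d\tau\, dx.
\]
The next step is to reduce the integral over $\Omega$ (a two-dimensional integral) to an integral over $\d\Omega$ (one-dimensional, the arclength variable $s$). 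This is where one uses that the restricted phases $\Psi_j^k(x,x)$, as functions of $x$ in the transverse (boundary-normal) direction $\mu$, have no critical point except at $\mu=0$ for generic interior points — or rather, one applies stationary phase in the normal variable $\mu$ with large parameter $\tau$, collapsing the $\mu$-integration onto $\d\Omega$ and picking up a Jacobian factor $|\tau|^{-1/2}$ together with a phase $e^{\pm i\pi/4}$ from the Hessian signature. This lowers the symbol order from $1/2$ to $0$: combined with the order-$1/2$ amplitude $b_{j,k,\pm}$ of Theorem \ref{HRP}, the result is a classical elliptic symbol $a(s,\tau)$ of order $0$, and the factor $(-1)^j e^{\pm i\pi/4} \cdot e^{\pm i\pi/4} = (-1)^j e^{\pm i\pi/2}$ organizes into the claimed Maslov-type phase $e^{i\pi r_j/4}\alpha_j(s)$ in the principal part, with $\alpha_j(s) > 0$ because $b_{j,k,\pm}$ is elliptic (its principal symbol is a positive multiple of $|\tau|^{1/2}$ times the positive quantity $f^{1/2}|{\cdots}|^{1/2}$ from Theorem \ref{HRP}). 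Periodicity of $a_j$ in $s$ and smoothness of $\mu_j(s) = \Psi_j^k(x,x)|_{x = x'}$ restricted to the diagonal are immediate from the corresponding properties of $\Psi_j^k$ and the boundary parametrization.

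There is one bookkeeping point: Theorem \ref{HRP} produces $8$ phases $\Psi_j^k$, but (per the Remark following it) on the diagonal of the boundary these collapse — the $j$-loop based at a boundary point is unique up to orientation, so the clockwise and counterclockwise loops give the two surviving phases and the other six either coincide with these after the $\mu$-stationary-phase reduction or contribute only to lower-order terms; the $\pm$ sum accounts for orientation, so after collapsing one is left with a single phase $\mu_j(s)$ and a single oscillatory integral of the form \eqref{Melrose 1}. The main obstacle will be justifying the stationary phase reduction in the normal variable $\mu$ rigorously — i.e., verifying that the restricted phase $\Psi_j^k(\cdot,\cdot)|_{\text{diag}}$ has, transverse to $\d\Omega$, a nondegenerate critical manifold exactly along the boundary and no other stationary points in $U_j$, and controlling the contribution near $\mu = 0$ where boundary-normal coordinates degenerate (the function $f(\mu,\phi)$ with $f(0,\phi)=1$ in Theorem \ref{HRP} is precisely the device that handles this). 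Once that is in place, matching the order and the constant $r_j$ against the signature count of the reduced Hessian is routine, and identifying the principal part as $e^{i\pi r_j/4}\alpha_j(s)$ with $\alpha_j > 0$ follows from ellipticity of $b_{j,k,\pm}$.
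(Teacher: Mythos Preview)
Your proposal takes a different route from the paper and has a genuine gap in the key reduction step.

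First, a framing issue: Theorem~\ref{Melrose theorem} is quoted from \cite{MaMe82}, and the paper does \emph{not} reprove it as stated --- on the contrary, it argues that the claimed order~$0$ is incorrect and should be~$1/2$ (see Theorem~\ref{Main theorem} and Section~\ref{Check on the order}). So your computation landing on order~$0$ should have been a warning sign, not a confirmation.

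Second, the paper's own derivation (leading to Theorem~\ref{Main theorem}) does \emph{not} use stationary phase in the normal variable~$\mu$. Instead it invokes Hadamard's variational formula \eqref{Hadamard reference}: the radial variation $\partial_r \operatorname{Tr}\cos t\sqrt{-\Delta_r}$ is already a \emph{boundary} integral of $\nabla_1^\perp\nabla_2^\perp S(t,q,q)$, so no interior-to-boundary reduction of the trace integral is ever performed. One then integrates this variation in~$r$ via the scaling argument of Section~\ref{Principle symbol computation}. The paper in fact lists two candidate methods for putting the trace on the boundary (a Green's-kernel integration by parts, and the Hadamard variation) and chooses the latter; your stationary-phase-in-$\mu$ is a third method the paper does not use.

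Third, your stationary-phase step does not go through as written. The diagonal-restricted phases $\Psi_j^k(x,x)$ do \emph{not} have a critical point at $\mu=0$: by \eqref{perp grads}, the normal derivative $\partial_\mu\Psi_j^k + \partial_\nu\Psi_j^k$ on the diagonal of the boundary equals $\pm\sin\omega_{j,1}\pm\sin\omega_{j,2}$, which is generically nonzero (for $TT$ and $NN$ it is $\mp(\sin\omega_{j,1}+\sin\omega_{j,2})\neq 0$). So there is no nondegenerate critical manifold along $\partial\Omega$ in the normal direction and no $|\tau|^{-1/2}$ gain. Worse, the eight branches $\Psi_j^k$ are only defined for interior $(x,y)$ and \emph{coalesce} as $(x,y)\to\Delta\partial\Omega$ (with branch-switching among $j-1$, $j$, $j+1$ reflections, cf.\ Lemma~\ref{Sj lemma}), so the individual phases are not smooth up to $\mu=0$ and a naive stationary-phase or endpoint analysis in~$\mu$ is singular. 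This is precisely the obstruction the Hadamard variational approach is designed to bypass.
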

\noindent Here, $E_j(t)$ is the cosine kernel associated to the parametrix for the $j$ reflection wave operator constructed in \cite{Ch76}, which is reviewed in Section \ref{Chazarain's Parametrix}. In particular, if $L_j$ is a simple length corresponding to a nondegenerate periodic orbit, then Theorem \ref{Melrose theorem} gives an asymptotic expansion for the localized wave trace. In fact, if $\Omega$ satisfies the noncoincidence condition \eqref{NCC}, i.e. $|\d \Omega|$ is not a limit point from below of the lengths of orbits of rotation number $m/n$ for $m \geq 2$, then the trace in Theorem \ref{Melrose theorem} is a spectral invariant. The purpose of this article is to explicitly calculate the principal symbol $a(s,\tau)$ of \eqref{Melrose 1} in terms of geometric data associated to the billiard map, both in the case of simple lengths corresponding to nondegenerate periodic orbits and also for one parameter families of degenerate periodic orbits,  all having the same length associated to a caustic of rational rotation number. It also corrects several errors in literature on the order of $a \in S_{\text{cl}}^{1/2}(\d\Omega)$.

\section{Billiards}\label{Billiards}

\noindent Before obtaining a singularity expansion for the wave trace, we first review the relevant background needed on billiards. This will be useful in our discussion of Chazarain's parametrix in Section \ref{Chazarain's Parametrix}. In this section, we denote by $\Omega$ a bounded strictly convex region in $\R^2$ with smooth boundary. This means that the curvature of $\d \Omega$ is a strictly positive function. The billiard map is defined on the coball bundle of the boundary $B^* \d \Omega = \{(q, \zeta) \in T^*\d \Omega : |\zeta| < 1 \}$, which can be identified with the inward part of the circle (cosphere) bundle $S_{\d \Omega}^* \R{^2}$, via the natural orthogonal projection map. We can also identify $B^* \partial \Omega$ with ${\R}\slash {\ell \Z} \times (0, \pi)$, where $\ell=|\partial \Omega|$ is the length of the boundary. Define
\begin{align*}
t_{\pm}^1(y,\eta)& = \inf\{t > 0 : g^{\pm t}(y,\eta) \in  \d \Omega \},\\
t_{\pm}^{-1}(y,\eta) &= \sup \{t < 0 : g^{\pm t}(y,\eta) \in  \d \Omega \},
\end{align*}
where $\pi_1$ is projection onto the first factor and $g^{\pm t}$ is the forwards $(+)$ or backwards $(-)$ geodesic flow on $\R^2$, corresponding to the Hamiltonian $H_\pm = \pm|\eta|$ (see Section \ref{Chazarain's Parametrix}). 
If $(y,\zeta) \in B^* \d \Omega$ is mapped to the inward pointing covector $(y,\eta) \in T_{\d \Omega}^* \R^2$ under the inverse projection map, then we define
$$
\beta^{\pm1}(y,\eta) = \widehat{g^{t_\pm^1}(y,\eta)},
$$
where a point $\widehat{(x,\xi)}$ is the reflection of $\xi$ through the cotangent line $T_x^*\d \Omega$, i.e. $\widehat{(x,\xi)}$ has the same footpoint and (co)tangential projection as $(x,\xi)$, but reflected conormal component, so that it is again in the inward facing portion of the circle bundle. We call $\beta := \beta^{+1}$ the billiard map. It is well known that $\beta$ preserves the natural symplectic form induced on $B^*(\d \Omega)$ and is differentiable there, extending continuously up to the boundary. The maps $\beta^{\pm n}$ are defined via iteration and it is clear that $\beta^{-n} = (\beta^{n})^{-1}$ for each $n \in \Z$. Associated to the billiard map is the billiard flow, or broken bicharacteristic flow, which we denote by $\Phi^t$.\\
\\
Geometrically, a billiard orbit corresponds to a union of line segments which are called links. A smooth closed curve $\mathcal C$ lying in $\Omega$ is called a {caustic} if any link drawn tangent to $\mathcal C$ remains tangent to $\mathcal C$ after an elastic reflection at the boundary of $\Omega$. By elastic reflection, we mean that the angle of incidence equals the angle of reflection at an impact point on the boundary. We can map $\mathcal C$ onto the total phase space $B^* \partial \Omega$ to obtain a smooth closed curve which is invariant under $\beta$. If the dynamics are integrable, these invariant curves are precisely the Lagrangian tori which folliate phase space. A point $P$ in $B^*\partial \Omega$ is called $q$-periodic ($q \geq 2$) if $\beta^q(P)=P$. We define the rotation number of a $q$-periodic orbit $P$ by $\omega(P)= \frac{p}{q}$, where $p$ is the winding number of the orbit generated by $P$, which we now define. We may consider the modified billiard map $\widetilde{\beta} = \Pi^* \beta$, where $\Pi$ is the natural mapping from $\R / \ell \Z \times [0,\pi]$ to the closure of the coball bundle $\overline{B^*\d \Omega}$. Pulling back by $\Pi$ clearly preserves the notion of periodicity. There exists a unique lift $\widehat{\beta}$ of the map $\widetilde{\beta}$ to the closure of the universal cover $\R \times [0,\pi]$ which is continuous, $\ell$ periodic in first variable and satisfies $\widehat{\beta}(x,0) = (x,0)$. Given this normalization, for any point $(x,\theta) \in \R/\ell \Z \times [0,\pi]$ in a $q$ periodic orbit of $\widetilde{\beta}$, we see that $\widehat{\beta}^q(x,\theta) = (x + p \ell, \theta)$ for some $p \in \Z$. We define this $p$ to be the winding number of the orbit generated by $\Pi (x,\theta) \in \overline{ B^* \d \Omega}$. We see that even if a point $\Pi (x,\theta)$ generates an orbit which is not periodic in the full phase space but is such that $\pi_1 (\widetilde{\beta}^q(x,\theta)) = x$ for some $q \in \Z$, we can still define a winding number in this case. Such orbits are called loops or geodesic loops. For a given periodic orbit, the winding number is independent of which point in the orbit is chosen, so we sometimes write $\omega(\gamma) = \omega(P)$ for any $P \in \gamma = \{P, \beta(P), \cdots, \beta^{q-1}(P)\}$. For deeper results and a more thorough introduction to dynamical billiards, we refer the reader to \cite{Tabachnikov}, \cite{Katok}, \cite{Popov1994} and \cite{PopovTopalov}.
\\
\\
What will be crucial for us in later sections is a description of all orbits making a fixed number of reflections which connect interior points near the diagonal of the boundary in approximately one rotation. These orbits will allow us to cook up phase functions in Section \ref{Chazarain's Parametrix} which parametrize the canonical relation of the wave propagator.
\begin{theo}[8 Orbit Theorem]{\label{8 orbit lemma}}
There exist $C_0 > 0$ and $j_0 = j_0(\Omega)$ sufficiently large such that for $j \geq j_0$ and any two points $x,y \in \text{int}(\Omega)$ which are $C_0/j^4$ close to the diagonal of the boundary, there exist precisely four distinct, broken geodesics of $j$ reflections making approximately one counterclockwise rotation, emanating from $x$ and terminating at $y$. Similarly, there exist four such orbits in the clockwise direction. If $x,y \in \d \Omega$ and are $C_0/j^4$ close to the diagonal, there exists only one clockwise and one counterclockwise orbit connecting $x$ to $y$ in $j$ reflections and approximately one rotation. In particular, when $x = y \in \d \Omega$, there is a unique (up to time reversal) geodesic loop based at $x$ of rotation number $1/j$.
\end{theo}

\noindent The proof of Theorem \ref{8 orbit lemma} is based on several lemmas in Section \ref{Proof of 8 orbit lemma} below and is inspired by the author's previous work in \cite{Vig18}, where a similar construction was adapted to elliptical billiard tables. As in that paper, the proof actually provides more information on the topological structure of the orbits. The existence of orbits connecting nearby \textit{boundary} points and in particular geodesic loops of small rotation number is well known, although the material in Section \ref{Proof of 8 orbit lemma} below easily reproduces these results. The novelty of Theorem \ref{8 orbit lemma} is a complete description of orbits connecting \textit{interior points} as opposed to boundary points, which will ultimately allow us to extend microlocal parametrices for the wave propagator from the boundary (as in \cite{MaMe82}) to the interior.
\\
\\
We now explain what is meant by approximately one rotation. Let $\xi \in S_x^* \Omega$ be one of the $4$ covectors corresponding to the initial condition of a counterclockwise orbit described in Theorem \ref{8 orbit lemma}. Denote by $\widehat{x} = \pi_1 g^{t_1^+}(x,\xi)$ the first point of reflection at the boundary ($\pi_1$ is projection onto the first factor) and by $\widehat{y}$ the $(j+1)$st point of reflection at the boundary after the orbit reaches $y$. If $x, y$ are $O(j^{-1})$ close to the diagonal of the boundary, then $|\widehat{x} - \widehat{y}| = O(j^{-1})$ (see Section \ref{Proof of 8 orbit lemma}). Also let $\omega$ be the angle of reflection made by the orbit at $\widehat{x}$ and note that $\widehat{x}, \widehat{y}$ and $\omega$ all depend implicitly on $\xi$.
\begin{def1}
We say that an orbit makes \textit{approximately one counterclockwise rotation} if for each of the initial covectors $\xi \in S_x^*\Omega$ of the $4$ counterclockwise orbits provided by Theorem \ref{8 orbit lemma}, we have
\begin{equation*}
	|\pi_1 \widehat{\beta}^j(\widehat{x}, \omega) - \widehat{y} - \ell| \leq \ell/100.
\end{equation*}
\noindent Here, $\ell = |\d \Omega|$ and $\widehat{\beta}$ is the lift of the billiard map to the closure of the universal cover $\R \times [0,\pi]$ as described in Section \ref{Proof of 8 orbit lemma}. 
\end{def1}
\begin{rema}
The choice of $\ell/100$ is somewhat arbitrary, but having $\ell$ in the numerator allows for scale invariance and finding the optimal constant is irrelevant for our purposes. The notion of approximately one clockwise rotation is defined analogously.	
\end{rema}

\begin{def1}\label{TTNN}
Of the four counterclockwise orbits emanating from $x$, two of them become tangent to a level curve of the distance function $d(z) = \text{dist}(z, \d \Omega)$ before making a reflection at the boundary. We denote these orbits by $T$ orbits (for tangency) and call their first links $T$ links. The other two orbits make a reflection at the boundary before becoming tangent to a level curve of $d$ and we call these $N$ orbits (for nontangency); their first link is called an $N$ link. Within either $T$ or $N$ category for the first link, the final link of one of the orbits reaches $y$ before becoming tangent to a level curve (an $N$ link) and the other has a point of tangency before reaching $y$ (a $T$ link). In this way, we obtain four types of counterlockwise orbits from $x$ to $y$, which we denote by $TT$, $TN$, $NT$, and $NN$. The same classification also applies to the four clockwise orbits.
\end{def1}
\begin{figure}
\includegraphics[scale=0.25]{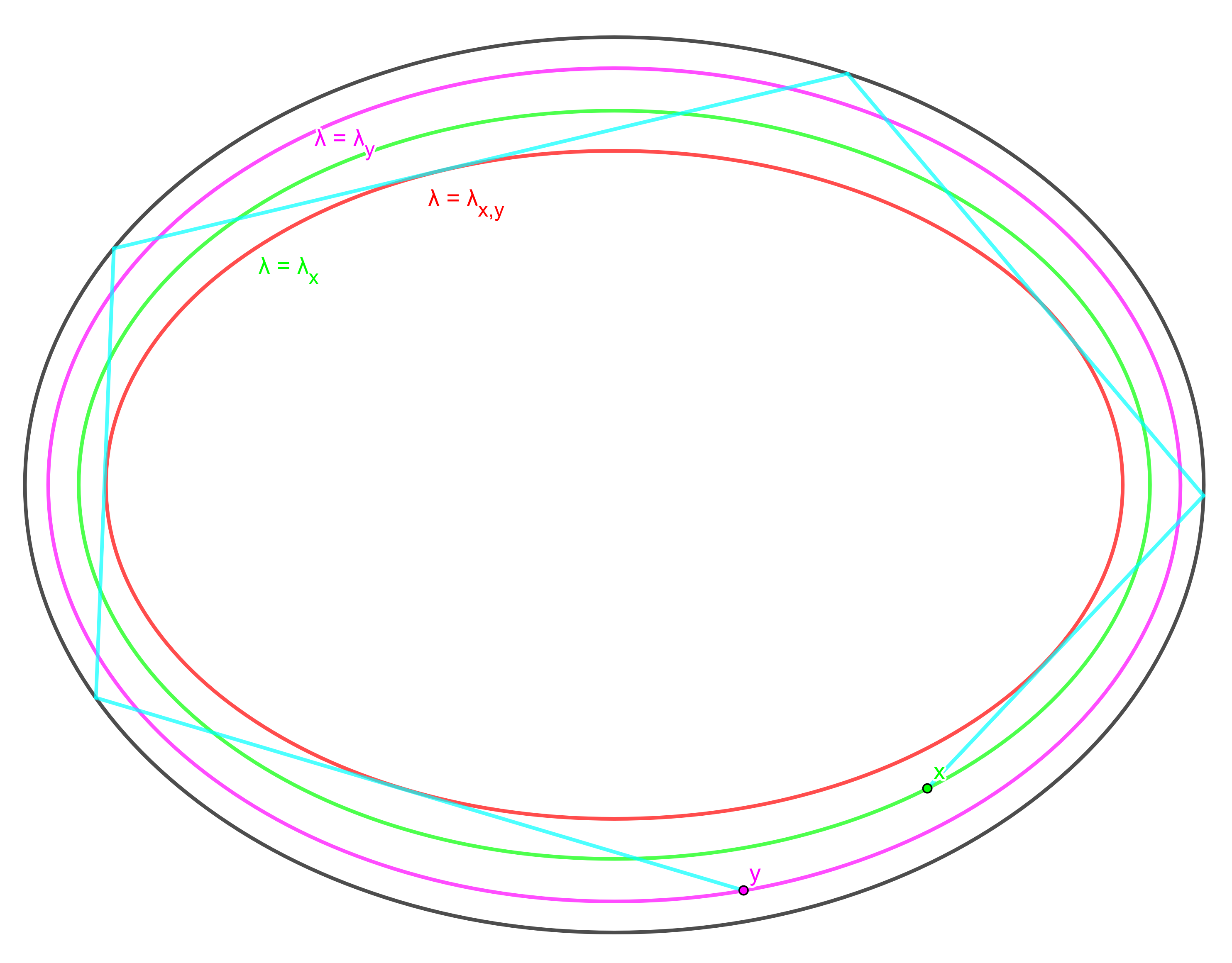}
\includegraphics[scale=0.25]{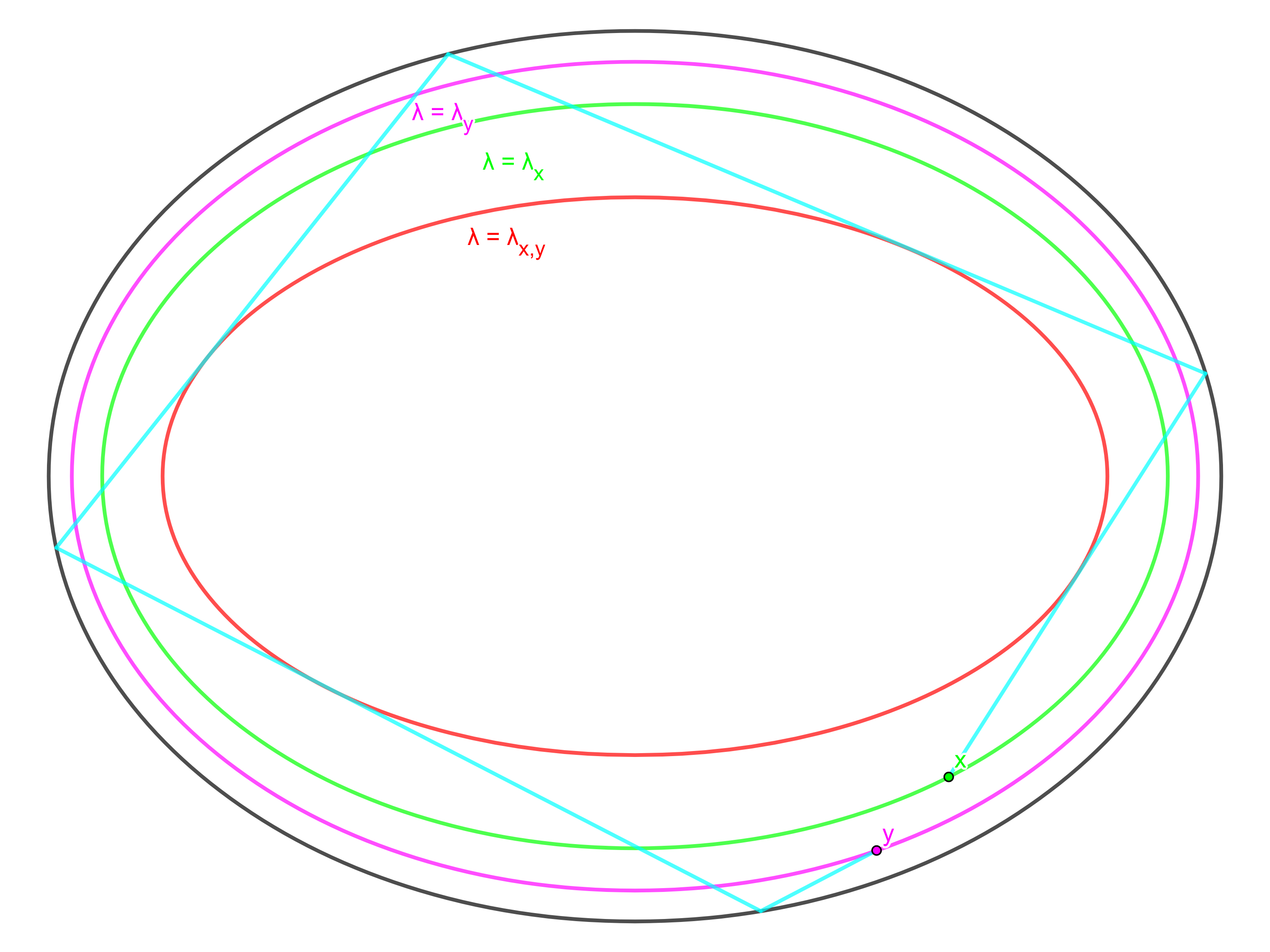}
\includegraphics[scale=0.25]{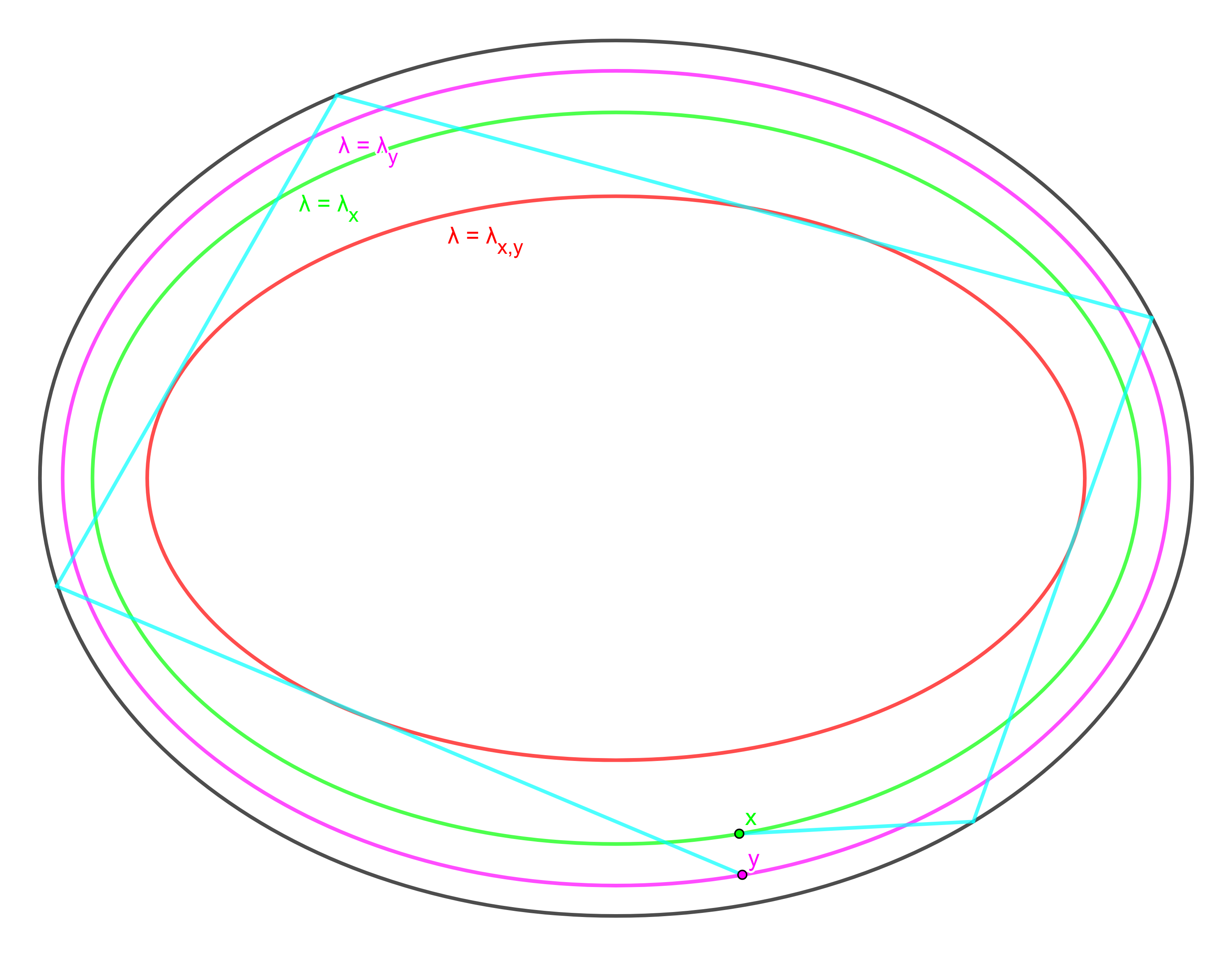}
\includegraphics[scale=0.25]{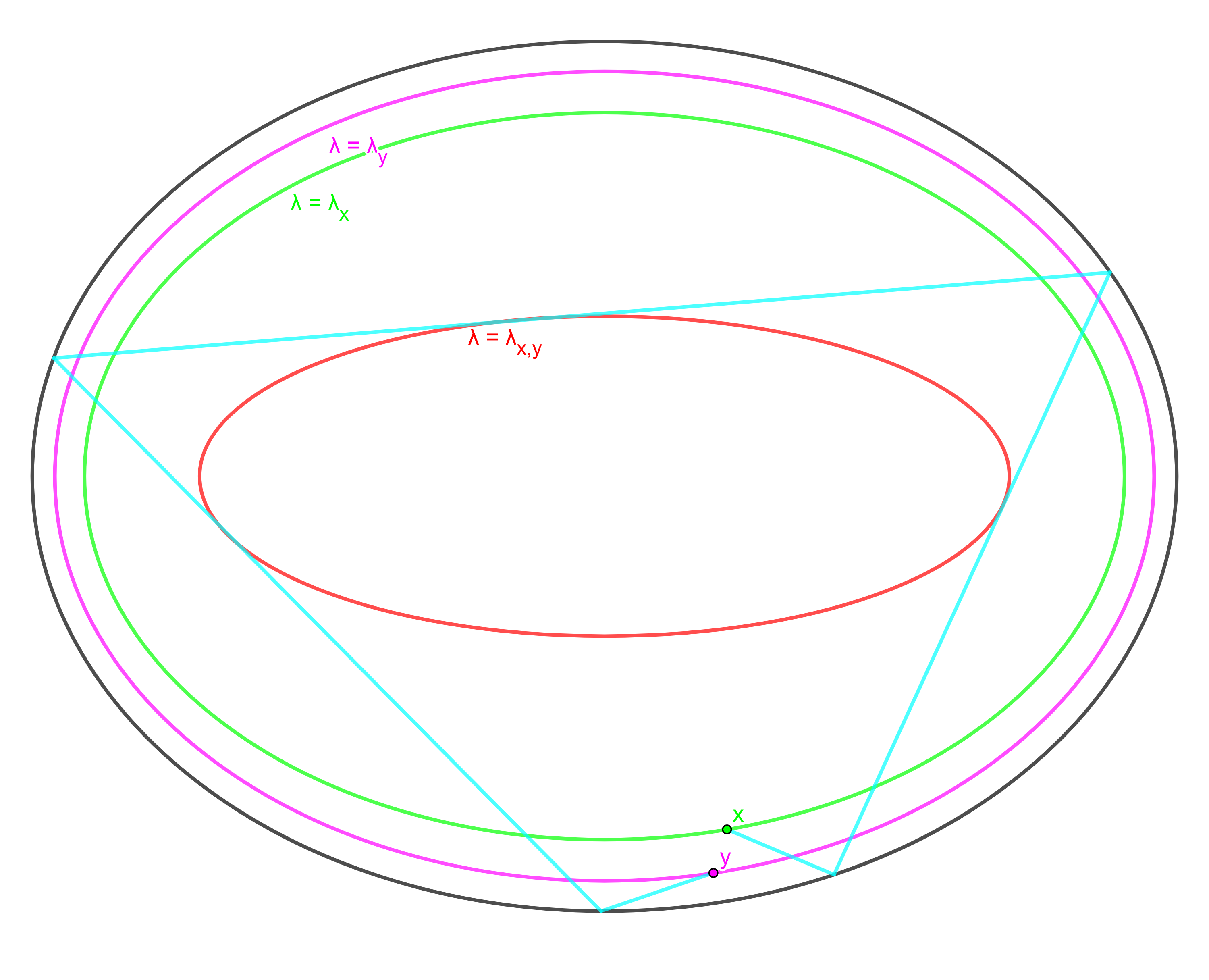}
\caption{Counterclockwise orbit configurations TT, TN, NT, and NN corresponding to $j = 4$. The green and pink curves are the distance curves on which $x$ and $y$ respectively lie. The red curve is a caustic to which the billiard orbit is tangent in the completely integrable setting.}
\label{configurations}
\end{figure}
\noindent See Figure \ref{configurations} for an example on the ellipse with $j=4$. These configurations will be important in determining which limiting orbits give rise to geodesic loops of precisely $j$ reflections as $(x,y) \to \Delta \d \Omega$, where $\Delta: \d \Omega \to \d \Omega \times \d \Omega$ is the diagonal embedding $x \mapsto (x,x)$.
In Section \ref{A parametrix for the wave propagator}, we will actually be interested in orbits connecting $y$ to $x$ rather than $x$ to $y$ for reasons related to symplectic geometry and H\"ormander's conventions on the theory of Fourier integral operators. As Theorem \ref{8 orbit lemma} is clearly symmetric in $x$ and $y$, there is no problem in interchanging the initial and final points of the orbit.
\begin{def1}\label{psi jk}
For a billiard orbit $\gamma$ begining at $y$ and terminating at $x$, we define the length functional $\Psi(x,y)$ to the the Euclidean length of $\gamma$. As there are potentially many such $\gamma$ connecting $y$ and $x$, $\Psi(x,y)$ is multivalued. For $1 \leq k \leq 8$, denote by $\Psi_j^k(x,y)$ a branch of the length functional corresponding to one of the orbits of $j$ reflections in Theorem \ref{8 orbit lemma}. It depends only on $x,y, j$ and $k$. We use the convention that for a fixed number of reflections $j$, the indices $1 \leq k \leq 4$ correspond to the counterclockwise orbits $TT, TN, NT, NN$ in that order and the indices $5 \leq k \leq 8$ correspond to their clockwise counterparts in the same order. 
\end{def1}
\noindent The author learned of a similar function in \cite{MaMe82} (page 492), where its restriction to the boundary is defined. In such a case, i.e. if $x, y \in \d \Omega$, it is stated in \cite{MaMe82} and proved in \cite{Popov1994} that only a single counterclockwise orbit of $j$ reflections exists between the boundary points if they are sufficiently close and $j$ is sufficiently large.  The proof in Section \ref{Proof of 8 orbit lemma} below also shows that as $x$ and $y$ approach the diagonal of the boundary from the interior of $\Omega$, the corresponding orbits coalesce and converge to the orbits described in \cite{MaMe82}. This in fact proves the claims made in \cite{MaMe82} and provides an alternative to the methods employed in \cite{Popov1994}. The limiting orbits may have a different number of reflections though and this is addressed in the proof of Lemma \ref{Sj lemma}.
\begin{def1}\label{jloop}
For $x = y = q \in \d \Omega$, we denote by $\Psi_j(q,q)$ the length of the unique geodesic loop of $j$ reflections based at $q$. This is the $j$-loop function.
\end{def1}

\section{Proof of Theorem \ref{8 orbit lemma}}\label{Proof of 8 orbit lemma}

\subsection{Friedlander Model}
Let us first sketch the proof of \ref{8 orbit lemma} in the special case of the \textit{Friedlander model}, which can be considered as an approximation to the billiard map near tangential rays. Following \cite{CdVJG}, the Friedlander operator is defined to be
\begin{align}
	L = -\d_x^2 - (1+x) \d_y^2
\end{align}
on the manifold $M = [0, \infty) \times \R/2\pi \Z$, together with appropriate boundary conditions. The associated classical Hamiltonion is $H(x,y, \xi, \eta) = \xi^2 + (1+ x) \eta^2$, which we restrict to $S^*M = \{(x,y,\xi,\eta) \in T^*M: H(x,y,\xi,\eta) = 1\}$. $S^*M$ is a circle bundle over $[0, \infty) \times \R$ and the fiber over each base point $(x,y)$ is an ellipse, which can be parametrized in elliptical polar coordinates by
\begin{align}
	(\xi, \eta) = \left(\cos \theta, \frac{1}{\sqrt{1 + x}} \sin \theta \right), \qquad 0 \leq \theta < 2\pi.
\end{align}
The integral curves of the Hamiltonian vector field associated to $H$ with initial condition $(x_0, y_0, \xi_0, \eta_0) \in T_{(x_0, y_0)}^* M$ can easily be seen to be
\begin{align}\label{HJS}
	\begin{pmatrix}
		x(t)\\
		y(t)\\
		\xi(t)\\
		\eta(t)
	\end{pmatrix} =
	\begin{pmatrix}
		x_0 + \xi_0 t - \eta_0^2 t^2 / 4\\
		y_0 + \eta_0 t + \eta_0 \left(-\eta_0^2 t^3/ 12 + \xi_0 t^2/ 2 + x_0 t\right) \mod 2 \pi\\
		-\eta_0^2 t^2/4 + \xi_0 t + \xi_0\\
		\eta_0 
	\end{pmatrix}.
\end{align}
If $(x(t), y(t), \xi(t), \eta(t))$ is an orbit of \ref{HJS} on $[0,T]$ with $x(t) > 0$ for $0 \leq t < T$ and $x(T) = 0$, we reflect the covector by the law of equal angles and continue the flow past time $T$. In other words, the billiard map is discontinuous at $T_{\d M}^* M$ and we set
$$
\xi(T) = - \lim_{t \to T^-} \xi(t), \quad \eta(T) = \lim_{t \to T^-} \eta(t)
$$
to extend the flow for all $t > 0$.
\\
\\
Fix the bounce number $j \in \N$ large and let $a = (x_0, y_0)$ with $x_0  >0$ sufficiently small (in terms of $j$). If  $\theta_0 = \pi/2, 3\pi/ 2$, the corresponding orbit is tangent to the caustic $x = x_0$ and will make less than $1/4$ rotation around $\d M$. Denote by $\theta(t)$ the anglular component of $(\xi(t), \eta(t))$ in elliptical polar coordinates:
\begin{align}
	\theta(t) = 2\pi - \arctan \left( - \sqrt{1+ x(t)} \frac{\eta(t)}{\xi(t)}\right).
\end{align}
For an initial angle $\theta_0$, it can be seen from the formula for $x(t)$ in \ref{HJS} that the trajectory reaches the $y$ axis at time
\begin{align}\label{T}
	T(\theta_0) = \frac{2 \xi_0}{\eta_0^2} + \sqrt{4 \frac{\xi_0^2}{\eta_0^4} + 4 \frac{x_0}{\eta_0^2}} = 2 (1+ x_0) \frac{\cos \theta_0}{\sin^2 \theta_0} + 2 \sqrt{(1 + x_0)^2 \frac{\cos^2 \theta_0}{\sin^4 \theta_0} + (1 + x_0)\frac{x_0}{\sin^2 \theta_0}}.
\end{align}
One can then directly show that the angle
$$
\theta_1: = \theta(T(\theta_0)) = 3\pi - \lim_{t \to T(\theta_0)^-} \theta(t),
$$
from which the reflected trajectory emanates, has a global minimum at $\theta_0 = 3 \pi/2$ on the interval $[\pi, 2\pi]$: since the orbit is tangent to the caustic $x = x_0$, it is easy to see geometrically that the larger $\theta_1$ is, the greater the parameter $x$ of the associated caustic will be. Similarly, the first impact point $y_1 := y(T(\theta_0))$ on the universal cover of $\d M$ is monotone decreasing in $\theta_0$ and its speed is bounded for $\theta_0$ away from multiples of $2\pi$. Near $2\pi$, the orbits extend far into the right half plane and make too many rotations after $j$ bounces, or at $2\pi$ the corresponding orbit is a horizontal half ray. By translation invariance in $y$, it is clear that links connecting $\d M$ to itself are vertical translates of one another (Figure \ref{Fried}). Letting  $y_j$ denote the $y$ coordinate of the $j$th impact point on the boundary, it follows that $y_j$ is also continuous and decreasing as a function of $\theta_1$ with approximate speed $j$:
\begin{align}
	y_j(y_1, \theta_1) = y_1 + j \left(T \sin \theta_1  - T^3 \sin^3 \theta_1  /12 + T^2 \cos \theta_1 \sin \theta_1 / 2 \right)
\end{align}
where $T = T(\theta_1) = 2 \cos \theta_1 / \sin^2 \theta_1$ is defined analagously to \ref{T} so that the $x$ coordinate is again $0$ after time $T$ since the impact at $(0, y_1)$. Let $b = (\wt{x}, \wt{y})$ be sufficiently close to $a$ (in terms of $j$) and note that the orbit connecting $(0,y_j)$ to $(0,y_{j+1})$ then intersects the caustic $x = \wt{x}$ exactly twice. If $j$ is large, it is also easy to see that the $y$ coordinates of these intersection points are monotone decreasing in $\theta_1$, with approximate speed $j$.
\begin{figure}[h]
	\label{pic}
	\centering
	\includegraphics[scale = 0.3]{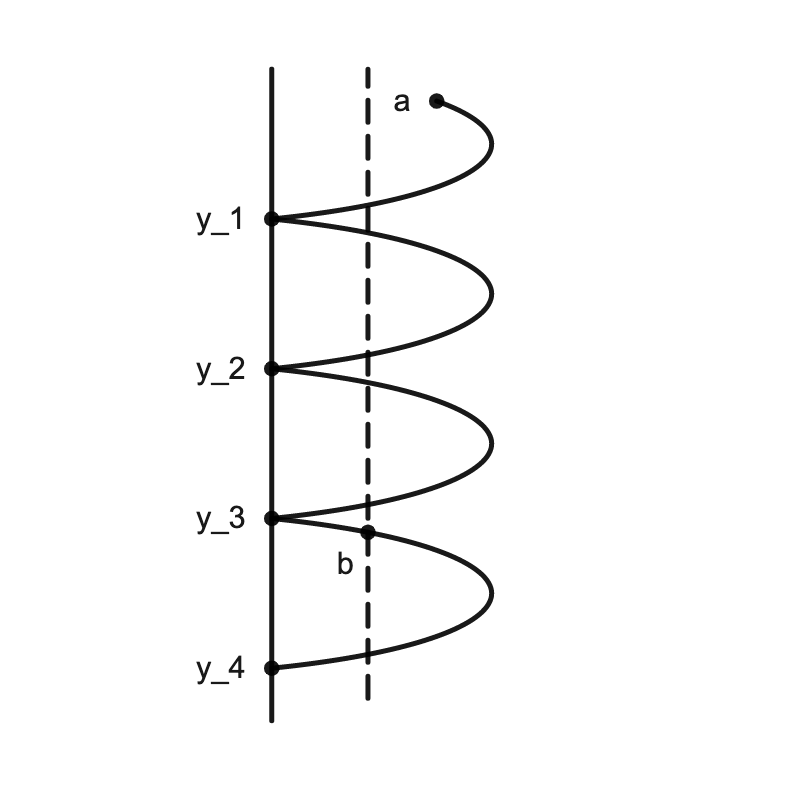}
	\caption{Four bounce forward orbit of the point $a = (x_0, y_0)$, interesecting the caustic $x = \wt{x}$ at the point $b$}
	\label{Fried}
\end{figure}
Hence, by the intermediate value theorem and decreasing $\theta_0$ on $\pi < \theta_0 < 3\pi/2$, we find exactly two angles for which the link $\overline{(0,y_j) (0,y_{j+1})}$ intersects the caustic $x = \wt{x}$ precisely at the point $b = (\wt{x}, \wt{y})$ in approximately one rotation. Similarly, there exist two such angles in $[3 \pi/2,  2\pi]$, and $4$ more angles in $[0, \pi]$ which generate orbits in the oppsite direction ($y$ increasing). This concludes our sketch of the proof of the $8$ Orbit Theorem in the special case of Friedlander's model. We will now more rigorously prove the claims made above directly for the billiard map on a convex domain by working in Lazutkin coordinates. The required proximity of orbit endpoints $a$ and $b$ to the diagonal and boundary will be made more precise in terms of $j$ below as well. We will use $x$ and $y$ in place of $a$ and $b$ when not working directly with Cartesian coordinates.

\subsection{General Convex Domains}
\noindent We will construct conic subbundles $C_\pm^*(\Omega; j) \subseteq  T^*\Omega$ over a tubular neighborhood $\widetilde{U}_j$ of the boundary with the following property: for each $x_0 \in \widetilde{U}_j$, all orbits emanating from $x_0$ which make $j$ reflections and approximately one rotation in the sense of Section \ref{Billiards} have initial covector in $C_{x_0}^*(\Omega;j)$, the fiber of $\cup_\pm C_\pm^*(\Omega;j)$ over $x_0$. At each $x_0 \in \widetilde{U}_j$, there also exist distinguished covectors $\xi_0^\pm \in T_{x_0}^*(\Omega) \backslash 0$ which are tangent to distance curves folliating a neighborhood of the boundary. If $\widetilde{U}_j$ is chosen to be sufficiently narrow, the $j$ reflection orbits emanating from $\xi_0^\pm$ will make less than a quarter rotation. By rotating $\xi$ away from $\xi_0^\pm$ in either direction within the fiber of $C_\pm^*(\Omega;j)$ at $x_0$, we will show that the angle of reflection at the first impact point on the boundary increases monotonically. From the boundary, we can then take advantage of the twist property of the billiard map, and show that monotonicity of the incident angles causes orbits of a large number of reflections to wind around $\Omega$. Two of the orbits from $x$ to $y$ will be obtained by perturbing $\xi_0^+$ in the counterclockwise direction and the other two will be obtained by perturbing $\xi_0^+$ in the clockwise direction. The four clockwise orbits will then be constructed in a similar manner, rotating $\xi_0^-$ in both the clockwise and counterclockwise fiber directions. The arguments in this section will also provide the additional topological structure of the resulting orbits referenced in Definition \ref{TTNN}, which can be seen in Figure \ref{configurations}.


\subsection{Lazutkin coordinates} Recall that the billiard map is defined on the coball bundle $B^*\d \Omega$, which can be identified with the collection of inward facing covectors in the circle bundle $S_{\d \Omega}^{*} \R^2$. Letting $s$ denote the arclength parameter on $\d \Omega$ and $\vartheta$ the angle an inward facing covector makes with the positively oriented boundary, we define the modified billiard map $\widetilde{\beta}$ in terms of $(s, \vartheta) \in \R/\ell \Z \times (0,\pi)$. In this coordinate system, the modified billiard map is given by $\widetilde{\beta}(s_1, \vartheta_1) = (s_2, \vartheta_2)$, where
\begin{align}
\begin{cases}
s_2 = s_1 + a_1(s_1) \vartheta_1 + a_2(s_1) \vartheta_1^2 + a_3(s_1) \vartheta_1^3 + F(s_1, \vartheta_1)\vartheta_1^4\\
\vartheta_2 = \vartheta_1 + b_2(s_1) \vartheta_1^2 + b_3(s_1) \vartheta_1^3 +  G(s_1, \vartheta_1)\vartheta_1^4,
\end{cases}
\end{align}
and $a_i, b_i, F$ and $G$ are smooth functions. This is a Taylor expansion in the angular variable near $\vartheta_1 = 0$. There are explicit formulas for the coefficients $a_i, b_i$ (\cite{Lazutkin}). In particular, $a_1(s_1) = 2 \rho(s_1)$ and $b_2 = -2/3 \rho'(s_1)$, where $\rho(s_1)$ is the radius of curvature at $s_1$. In \cite{Lazutkin}, the change of coordinates
\begin{align}\label{Lazutkin coord}
x = \frac{\int_0^{s} \rho^{2/3}(t)dt}{\int_0^{\ell} \rho^{2/3}(t)dt}, \qquad 
\alpha = \frac{4 \rho(s) \sin \vartheta/2}{\int_0^{\ell} \rho^{2/3}(t)dt}
\end{align}
was introduced near the circle corresponding $\vartheta = 0$. We call the coordinate system $(x, \alpha)$ in \eqref{Lazutkin coord} Lazutkin coordinates. The advantage of this change of variables is that in these coordinates, the billiard map becomes a small perturbation of the translation map
\begin{align}
\begin{pmatrix}
x_1\\
\alpha_1
\end{pmatrix} \mapsto
\begin{pmatrix}
1 & 1\\
0 & 1
\end{pmatrix}
\begin{pmatrix}
x_1\\
\alpha_1
\end{pmatrix}
= \begin{pmatrix}
x_1 + \alpha_1\\
\alpha_1
\end{pmatrix},
\end{align}
which is completely integrable. The billiard map is given in Lazutkin coordinates by $(x_1, \alpha_1) \mapsto (x_2, \alpha_2)$, where
\begin{align}
\begin{cases}
x_2 = x_1 + \alpha_1 + \alpha_1^3 f(x_1, \alpha_1)\\
\alpha_2 = \alpha_1 + \alpha_1^4 g(x_1,\alpha_1),
\end{cases}
\end{align}
for some smooth functions $f,g$. This change of variables was first derived in \cite{Lazutkin}, where it was shown as a consequence of the KAM theorem (see \cite{KAM1}, \cite{KAM4}, \cite{KAM2}, \cite{KAM3}) that there exist an abundance of invariant tori for the billiard map near $\alpha = 0$. We will use these coordinates to prove Theorem \ref{8 orbit lemma}. Without loss of generality, we will often interchange the use of arclength and Lazutkin coordinates in the domain of the billiard map $\beta$.

\subsection{Angles of reflection}
Before estimating the billiard map and its iterates from the boundary, we need several preliminary estimates on the angles of reflection. In this section, we denote $\widehat{\beta}^k(x_1, \alpha_1) = (x_k, \alpha_k)$.
\begin{lemm}\label{angle bound 1}
For each $c_1, c_2 > 0$, there exists $j_1 = j_1(\Omega, c_1, c_2) \in \N$ sufficiently large such that for $j \geq j_1$ and $c_1/j \leq \alpha_1 \leq c_2/j$, it follows that
$$
\frac{c_1}{(1 + c_1/j)^k j} \leq \alpha_k = \pi_2 \beta^k(x_1,\alpha_1) \leq \frac{c_2 (1+ \frac{c_2}{j})^k}{j} \qquad (1 \leq k \leq j),
$$
where $\pi_2$ is the projection onto the angular variable. In particular, 
$$
c_1 e^{-c_1}/j \leq \pi_2 \beta^k(x_1,\alpha_1) \leq c_2 e^{c_2}/j.
$$
\end{lemm}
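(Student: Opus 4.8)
The plan is to prove the claimed bounds by induction on $k$, using the iteration formula for the billiard map in Lazutkin coordinates, namely $\alpha_{k+1} = \alpha_k + \alpha_k^4 g(x_k,\alpha_k)$. The key observation is that, since $g$ is smooth on a compact neighborhood of the circle $\alpha = 0$, there is a uniform bound $|g(x,\alpha)| \leq M$ for all relevant $(x,\alpha)$; this $M$ depends only on $\Omega$. First I would set up the induction hypothesis that $\alpha_k$ lies in the stated interval, and observe that the map $\alpha \mapsto \alpha + \alpha^4 g(x,\alpha)$ is, for $\alpha$ small (i.e. $j$ large), monotonically increasing in $\alpha$ and a contraction-like perturbation of the identity: $\big|\alpha_{k+1} - \alpha_k\big| \leq M \alpha_k^4$. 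Feeding in $\alpha_k \leq c_2 e^{c_2}/j$ from the (not yet fully closed) induction, this gives $\big|\alpha_{k+1} - \alpha_k\big| = O(j^{-4})$, which is $o(j^{-2})$, hence negligible compared to the multiplicative factor $(1 + c_2/j)$ which changes $\alpha_k$ by an amount of order $j^{-2}$.

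The cleaner way to organize the induction is multiplicatively. For the upper bound, I would show $\alpha_{k+1} \leq \alpha_k (1 + M\alpha_k^3) \leq \alpha_k (1 + c_2/j)$ provided $M \alpha_k^3 \leq c_2/j$, which holds once $j \geq j_1$ is large enough (here one uses the a priori bound $\alpha_k \leq c_2 e^{c_2}/j$, so $M\alpha_k^3 \leq M c_2^3 e^{3c_2} j^{-3} \leq c_2/j$ for $j$ large, depending on $\Omega, c_1, c_2$). Iterating from $\alpha_1 \leq c_2/j$ yields $\alpha_k \leq (c_2/j)(1 + c_2/j)^k \leq (c_2/j) e^{kc_2/j} \leq (c_2/j)e^{c_2}$ for $k \leq j$, which is the asserted upper bound and also validates the a priori bound used in the step. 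For the lower bound, since $g$ may be negative, I would use $\alpha_{k+1} \geq \alpha_k(1 - M\alpha_k^3) \geq \alpha_k/(1 + c_1/j)$ for $j$ large (the last inequality because $1 - M\alpha_k^3 \geq (1+c_1/j)^{-1}$ once $M\alpha_k^3 \leq (c_1/j)/(1+c_1/j) $, again true for large $j$), and iterate from $\alpha_1 \geq c_1/j$ to get $\alpha_k \geq c_1/((1+c_1/j)^k j) \geq c_1 e^{-c_1}/j$ for $k \leq j$. This requires knowing $\alpha_k$ stays in the coordinate patch where the formula is valid, which the upper bound guarantees.

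The main (and essentially only) subtlety is bookkeeping the quantifiers: one must choose $j_1$ large enough that, simultaneously for all $k \le j$, the quantities $M\alpha_k^3$ are dominated by $c_1/j$ and $c_2/j$ as needed, and that all iterates $(x_k,\alpha_k)$ remain in the fixed neighborhood of $\{\alpha = 0\}$ on which the Lazutkin expansion and the bound on $g$ hold; since the a priori bound $\alpha_k \le c_2 e^{c_2}/j \to 0$ does the latter, and the former is a finite collection of inequalities each satisfied for $j$ large, a single $j_1 = j_1(\Omega, c_1, c_2)$ suffices. I do not expect any genuine obstacle here — the estimate is a routine Gr\"onwall-type iteration — and the final sentence of the lemma follows immediately from $(1 + c/j)^k \le e^{ck/j} \le e^{c}$ for $k \le j$.
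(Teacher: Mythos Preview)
Your proposal is correct and follows essentially the same approach as the paper: both arguments proceed by induction on $k$ using the Lazutkin iteration $\alpha_{k+1} = \alpha_k + \alpha_k^4 g(x_k,\alpha_k)$, bounding the quartic correction term so that $\alpha_{k+1}$ differs from $\alpha_k$ by at most the multiplicative factor $(1+c_i/j)^{\pm 1}$, and then choosing $j_1$ large enough (depending on $\Omega, c_1, c_2$) to close the induction. If anything, your write-up is slightly more careful about introducing the uniform bound $|g|\le M$ and about the bookkeeping of quantifiers; the paper's version has the same structure but leaves some of those details implicit.
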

\begin{proof}
Fix $c_1, c_2$ and choose $j_1$ large enough that for all $j \geq j_1$,
\begin{align}\label{j1}
	\begin{split}
	\frac{c_2^3e^{3c_2} \sup |g| }{j^3 } &\leq \frac{c_2}{j},\\
	\frac{c_1^3 \sup |g| }{j^3} &\leq \frac{c_1}{j}.
\end{split}
\end{align}
We will use Lazutkin coordinates and induction in $k$ to show the estimates in the lemma hold. Let $j \geq j_1$ and suppose the claim is true for $1, \cdots, k - 1$. Then 
\begin{align}
\begin{split}
\pi_2 \beta^k (x_1,\alpha_1) &= \pi_2 \beta \circ \beta^{k-1} (x_1, \alpha_1) = \alpha_{k-1} + \alpha_{k-1}^4 g(x_{k-1},\alpha_{k-1})\\
&\leq c_2(1 + c_2/j)^{k-1}/j + (c_2(1+ c_2/j)^{k-1}/j)^4 \sup |g|\\
&= \frac{c_2(1 + c_2/j)^{k-1}}{j} \left(1 + \frac{c_2^3(1 + c_2/j)^{3k-3} \sup |g|}{j^3} \right)\\
& \leq \frac{c_2(1 + c_2/j)^{k-1}}{j} \left(1 + \frac{c_2^3e^{3c_2} \sup |g|}{j^3} \right).
\end{split}
\end{align}
Given the choice of $j_1$ in the first line of \ref{j1}, the upper bound follows also for the $k$th iterate. Similarly,
\begin{align}
\begin{split}
\pi_2 \beta^k (x_1,\alpha_1) &= \pi_2 \beta \circ \beta^{k-1} (x_1, \alpha_1) = \alpha_{k-1} + \alpha_{k-1}^4 g(x_{k-1},\alpha_{k-1})\\
&\geq \frac{c_1}{(1 + c_1/j)^{k-1} j} - \left(\frac{c_1}{(1+ c_1/j)^{k-1}j}\right)^4 \sup |g|\\
&= \frac{c_1}{(1 + c_1/j)^{k-1}j} \left(1 - \frac{c_1^3}{(1 + c_1/j)^{3k-3} j^3} \sup |g| \right)\\
& \geq \frac{c_1}{(1 + c_1/j)^{k-1}j} \left(1 - \frac{c_1^3 \sup |g|}{j^3}  \right)
\end{split}
\end{align} which is greater than
\begin{align*}
\frac{c_1}{(1 + c_1/j)^k j},
\end{align*}
under the second condition in \ref{j1}, demonstrating the lower bound for $k$.
\end{proof}

\begin{lemm}\label{L3}
There exist $C_1 , C_2 > 0$ and $j_2 = j_2(\Omega) \in \N$ with the following property: for all  $j \geq j_2$ and $x, y$ which are $O(1/j)$ close to the diagonal of the boundary, any orbit $\gamma$ of $j$ reflections which emanates from $x$ and terminates at $y$ in approximately one rotation (in the sense of Section \ref{Billiards}) has angles of reflection $\alpha_k$ in the range
$$
\frac{C_1}{j} \leq \alpha_k \leq \frac{C_2}{j} \qquad (1 \leq k \leq j).
$$
\end{lemm}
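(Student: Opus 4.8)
\textbf{Proof strategy for Lemma \ref{L3}.}

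The plan is to reduce the claim to an application of Lemma \ref{angle bound 1}. That lemma already tells us that \emph{if} at the first reflection point the angle $\alpha_1$ lies in a window $c_1/j \leq \alpha_1 \leq c_2/j$, then all subsequent angles $\alpha_k$ ($1 \leq k \leq j$) remain trapped in a comparable window $c_1 e^{-c_1}/j \leq \alpha_k \leq c_2 e^{c_2}/j$. So the content of Lemma \ref{L3} is really just: the first reflection angle of an orbit that goes from $x$ to $y$ in $j$ reflections and approximately one rotation must itself be of size $\asymp 1/j$ — and more precisely, it must lie in such a fixed window with constants depending only on $\Omega$. Granting that, one sets $C_1 = c_1 e^{-c_1}$ and $C_2 = c_2 e^{c_2}$ and is done. (A small technical point: if $x$ itself is interior rather than on the boundary, one should first run the orbit forward to its first boundary reflection $\widehat{x}$; since $x$ is $O(1/j)$-close to $\d\Omega$ the first link is short and its angle with the boundary is what we call $\alpha_1$. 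This is exactly the $\omega$ from the discussion after Theorem \ref{8 orbit lemma}, and the distance curve geometry there already shows that $\widehat{x}$ is $O(1/j)$ from $x$.)

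First I would establish the \emph{upper} bound on $\alpha_1$. If $\alpha_1$ were $\gg 1/j$ — say $\alpha_1 \geq c_2/j$ for every constant $c_2$, i.e.\ $\alpha_1 j \to \infty$ — then by the lower half of Lemma \ref{angle bound 1} (applied with the window $[\alpha_1/2, \alpha_1]$ at step $1$, or more simply by monotone/Lazutkin estimates) all angles stay $\gtrsim \alpha_1$, and the $s$-coordinate advances by at least $a_1(s)\vartheta \asymp \rho\,\alpha_1$ at each of the $j$ reflections (this uses the Lazutkin normal form $x_2 = x_1 + \alpha_1 + O(\alpha_1^3)$, under which $j$ steps advance $x$ by $\asymp j\alpha_1$). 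Hence the total advance of the arclength parameter over $j$ reflections is $\gtrsim j\alpha_1 \gg \ell$, so the orbit winds around $\Omega$ strictly more than, say, $2$ times — contradicting the ``approximately one rotation'' hypothesis $|\pi_1\widehat\beta^j(\widehat x,\omega) - \widehat y - \ell| \leq \ell/100$ from Section \ref{Billiards}. This pins down $\alpha_1 \leq c_2/j$ for a fixed $c_2 = c_2(\Omega)$ (essentially $c_2 \approx \ell / \min_{\d\Omega}(2\rho)$ times a harmless constant).

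Next the \emph{lower} bound on $\alpha_1$: if $\alpha_1 \ll 1/j$, i.e.\ $\alpha_1 j \to 0$, then by Lemma \ref{angle bound 1} (upper half, window $[\alpha_1, 2\alpha_1]$) all angles remain $\lesssim \alpha_1$, so the Lazutkin $x$-coordinate advances by at most $\asymp j\alpha_1 \to 0$ over the whole orbit; thus $\pi_1\widehat\beta^j(\widehat x,\omega) - \widehat x = o(1)$, so the orbit winds around \emph{less} than $\ell/2$ — again contradicting the one-rotation condition $\geq \ell(1 - 1/100)$. This gives $\alpha_1 \geq c_1/j$ with $c_1 = c_1(\Omega)$. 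Combining with the first paragraph, $c_1/j \leq \alpha_1 \leq c_2/j$, and Lemma \ref{angle bound 1} propagates this to all $k$, finishing the proof with $j_2 := \max(j_1(\Omega,c_1,c_2), j_0(\Omega))$.

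\textbf{Main obstacle.} The routine part is the induction, which is already handled by Lemma \ref{angle bound 1}. The delicate point is making the ``winding $\asymp j\alpha_1$'' heuristic rigorous: one needs that over $j$ reflections the accumulated error terms $\sum \alpha_k^3 f(x_k,\alpha_k)$ in the Lazutkin $x$-update are genuinely lower order than the main term $\sum \alpha_k \asymp j\alpha_1$, uniformly in the orbit, and similarly that ``winding number'' is robust under the continuous lift $\widehat\beta$ even for non-closed loops. Since each $\alpha_k \asymp 1/j$ in the regime we care about, each error term is $O(j^{-3})$ and there are $j$ of them, giving total error $O(j^{-2}) = o(1)$ against a main term of size $O(1)$ — so the estimate closes, but the bookkeeping (and the case analysis at the two extremes $\alpha_1 j\to 0$ and $\alpha_1 j\to\infty$, where Lemma \ref{angle bound 1} must be invoked with shrinking/growing windows) is where the care goes.
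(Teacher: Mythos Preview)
Your overall plan—pin down one reflection angle in a $1/j$ window and then propagate via Lemma~\ref{angle bound 1}—matches the paper's, but the paper executes it differently and more directly. The paper does \emph{not} argue by contradiction on $\alpha_1$. Instead it observes that the $j$ reflection points partition the boundary into $j$ arcs whose total arclength is approximately $\ell$ (this is exactly what ``approximately one rotation'' encodes). By pigeonhole, some arc has length $\geq (99/100)\ell/j$ and some arc has length $\leq (101/100)\ell/j$. A curvature inequality (Proposition~14.1 of \cite{Lazutkin5}), namely $2\rho_{\min}\alpha_k \leq x_{k+1}-x_k \leq 2\rho_{\max}\alpha_k$, immediately converts these into $\alpha_p \geq c_1/j$ at some index $p$ and $\alpha_m \leq c_2/j$ at some index $m$, with explicit $c_1,c_2$ depending only on $\Omega$. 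Then Lemma~\ref{angle bound 1} is applied to both $\beta$ and $\beta^{-1}$ starting from $p$ (for the global lower bound) and from $m$ (for the global upper bound).

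Your contradiction argument for the \emph{lower} bound on $\alpha_1$ works. But the \emph{upper} bound has a gap as written. You propose invoking Lemma~\ref{angle bound 1} with window $[\alpha_1/2,\alpha_1]$, i.e.\ $c_1 = \alpha_1 j/2$, to conclude all $\alpha_k \gtrsim \alpha_1$. The lower bound that lemma actually produces is $c_1 e^{-c_1}/j$; if $c_1=\alpha_1 j/2$ is large this is exponentially smaller than $\alpha_1$, so the total advance you extract is only $\gtrsim c_1 e^{-c_1}$, which does not blow up and gives no contradiction. (The threshold $j_1$ in Lemma~\ref{angle bound 1} also depends on $c_1,c_2$, so ``growing windows'' are not free.) Your ``Main obstacle'' paragraph then slips into circularity: you justify the cubic error control by writing ``since each $\alpha_k\asymp 1/j$ in the regime we care about,'' which is the conclusion you are proving. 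One can repair this with a direct bootstrap on $\alpha_{k+1}=\alpha_k(1+\alpha_k^3 g)$ together with a case split on whether $\alpha_1\lesssim j^{-1/3}$, but the paper's pigeonhole route sidesteps the issue entirely: it never needs a lower bound on all the $\alpha_k$ in order to locate one small angle—the arc-length budget hands you that small angle for free.
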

\begin{proof}
Let $\widehat{x}, \widehat{y}$ denote the $1$st and $(j+1)$st points of reflection on the boundary. Recall that by approximately one rotation, we mean that
\begin{align*}
|\widehat{\beta}^j(\widehat{x}, \alpha_1) - \widehat{y} - \ell| \leq \ell /100,
\end{align*}
where $\widehat{\beta}$ is the lift to the universal cover and $\ell = |\d \Omega|$. Each arc, separated by moments of reflection $x_p, x_{p+1}$ on the boundary, has length $x_{p+1} - x_p$. As $\gamma$ makes approximately one rotation, we see that there must exist one arc of length $x_{p+1} - x_p \geq (99/100)\ell/j$. Similarly, there must exist an arc of length $x_{m+1} - x_m \leq (101/100) \ell/j$. By Proposition 14.1 of \cite{Lazutkin5}, for each $1 \leq k \leq j$,
\begin{align}\label{curvature bound}
2 \alpha_k \rho_\text{min} \leq x_{k+1} - x_k \leq 2 \alpha_k \rho_\text{max},
\end{align}
where $\rho_\text{min}$ and $\rho_\text{max}$ are the minimum and maximum radii of curvature respectively for $\d \Omega$. In particular, 
\begin{align}
\frac{99 \ell }{200 \rho_\text{max} j} \leq \alpha_p, \qquad \alpha_m \leq \frac{101 \ell}{200 \rho_\text{min} j}.
\end{align}
We now apply Lemma \ref{angle bound 1} to $\beta^\pm$ at the points $x_p$ and $x_m$ to conclude that for all $1 \leq k \leq j$,
\begin{align}
\frac{\exp\left( - \frac{99 \ell }{200 \rho_\text{max}}\right)}{j} \leq \alpha_k \leq \frac{\exp\left( \frac{101 \ell}{200  \rho_\text{min}} \right)}{j}.
\end{align}
Here we have used Lemma \ref{angle bound 1} applied to both the billiard map and its inverse, with the constants $c_1 = 99 \ell / 200 \rho_\text{max}$ and $c_2 = 101 \ell /200 \rho_\text{min}$ which depend only on $\Omega$. Hence, $j_2$ may be chosen uniformly for all orbits making approximately one rotation regardless of their initial positions.
\end{proof}

\noindent We now investigate the angle of reflection at the first point of impact on the boundary. If $d(z) = \dist(z, \d \Omega)$, then the level curves of $d$ folliate a neighborhood of the boundary. For $x_0$ in the tubular neighborhood $\widetilde{U}_j$ (whose diameter remains to be specified), we define $\xi_0^\pm \in \pm T_{x_0}^* d^{-1}(d(x_0)) \cap S_{x_0}^*\Omega$ so that $\xi_0^+$ points in the counterclockwise direction and $\xi_0^-$ points in the clockwise direction.
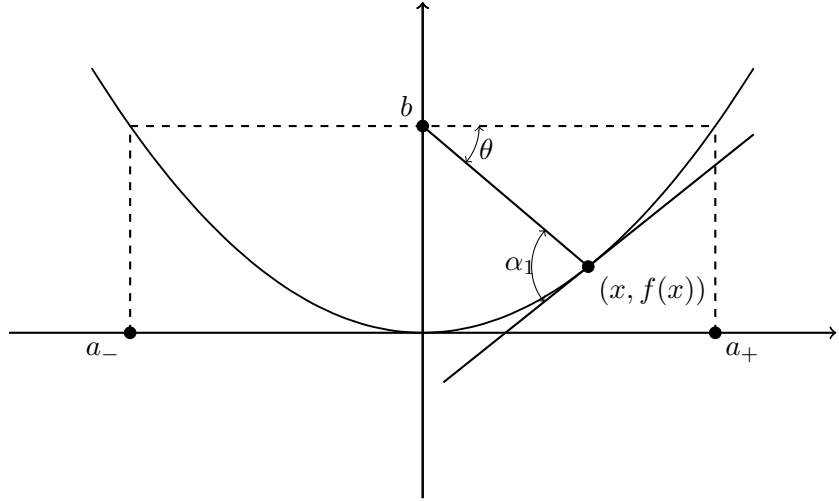
\begin{figure}
\begin{center}
\begin{tikzpicture} [domain=0:2, scale = 1.1]
\draw[->, thick] (-5,0)--(5,0); 
\draw[->, thick] (0,-2)--(0,4); 
\draw[->, thick] (0,-2)--(0,4);
\draw[black, line width = 0.25mm]   plot[smooth,domain=-4:4] (\x, 0.2*\x*\x);
\coordinate (A) at (0, 2.5);
\coordinate (B) at (2,0.8);
\coordinate (C) at (0.25,-0.6);
\coordinate (D) at (4, 2.4);
\coordinate (E) at (-3.53553390593,2.5);
\coordinate (F) at (3.53553390593,2.5);
\coordinate (a-) at (-3.53553390593, 0);
\coordinate (a+) at (3.53553390593, 0);
\draw[black, thick] (A)--(B);
\draw[black, thick] (C)--(D);
\draw[black, thick, dashed] (E)--(F);
\draw[black, thick, dashed] (a-)--(E);
\draw[black, thick, dashed] (a+)--(F);
\draw[fill=black] (a-) circle (2pt) node[below left]{$a_-$};
\draw[fill=black] (a+) circle (2pt) node[below right]{$a_+$};
\draw[fill=black] (A) circle (2pt) node[above left]{$b$};
\draw[fill=black] (B) circle (2pt) node[below right]{$(x,f(x))$};
\path 
  pic["$\alpha_1$",draw=black,<->,angle eccentricity=1.2,angle radius=0.75cm] {angle=A--B--C};
\path 
  pic["$\theta$",draw=black,<->,angle eccentricity=1.2,angle radius=0.75cm] {angle=B--A--F};
\end{tikzpicture}
\end{center}
\caption{Angular derivative of the billiard flow.}
\label{picture of billiard flow}
\end{figure}
Let $\xi \in S_{x_0}^* \Omega$ be identified with the corresponding point $\theta \in S^1 = \R/\Z$, which denotes the clockwise angular parametrization of the fiber $S_{x_0}^*\Omega$, normalized so that $\xi_0^+$ and $\xi_0^-$ correspond to $0$ and $\pi$ respectively. Denote by $\alpha_1 \in (0,\pi)$ the angle made between the billiard orbit emanating from $(x_0, \xi)$ and the positively oriented cotangent line at the first point of reflection at the boundary. It is clear that $|\d \xi / \d \theta| = 1$.
\begin{lemm}\label{L1}
There exist $c = c(\Omega) > 0$ and $j_3 = j_3(\Omega) \in \N$ such that if $j \geq j_3$ and $d(x_0) = O(1/j^2)$, then
$$
\frac{\d \alpha_1}{\d {\theta}} \geq c
$$
for all initial conditions $(x_0, \xi(\theta))$ which generate a counterclockwise orbit of $j \geq j_0$ reflections, making approximately one rotation. 
\end{lemm}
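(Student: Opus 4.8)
\medskip
\noindent\textbf{Proof strategy.} The plan is to compute $\d\alpha_1/\d\theta$ from the elementary geometry of the first link---the straight segment joining $x_0$ to its first impact point $\widehat{x_0}$ on $\d\Omega$, as in Figure \ref{picture of billiard flow}---and then to bound it below using the fact that an orbit of $j$ reflections making approximately one rotation has all of its reflection angles comparable to $1/j$ (Lemma \ref{L3} together with \eqref{curvature bound}).

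First I would pass to Fermi (boundary-normal) coordinates centered at the boundary point closest to $x_0$, writing $\d\Omega$ locally as a graph $y=\psi(u)=\tfrac{\kappa_0}{2}u^2+O(u^3)$ with curvature $\kappa_0=\kappa(x_0)>0$ and $x_0=(0,h)$, $h=d(x_0)$; in these coordinates $\xi_0^+$ is horizontal and $\theta$ measures the tilt of the ray issued by $\xi(\theta)$ away from $\xi_0^+$. By Lemma \ref{L3} every reflection angle $\alpha_k$ of the orbit satisfies $C_1/j\le\alpha_k\le C_2/j$, and comparing the tangent line to the distance curve at $x_0$ with the tangent of $\d\Omega$ at $\widehat{x_0}$ (these differ by $O(\kappa_0 r)$, with $r$ the link length) shows $|\theta|=O(1/j)$ and $r=O(1/j)$, so we are in the small-angle regime in which Taylor expansion is legitimate. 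Solving the perturbed quadratic for the point where the ray meets the graph gives $\kappa_0 r=\sqrt{\theta^2+2\kappa_0 h}-\theta+O(1/j^2)$, and reading off the angle between the ray and the boundary tangent at $\widehat{x_0}$ yields, uniformly,
\begin{align*}
\alpha_1(\theta)=\sqrt{\theta^2+2\kappa_0 h}+O(1/j^2),
\end{align*}
where the error and its $\theta$-derivative are controlled, using $r=O(1/j)$ and the two-sided curvature bounds on $\d\Omega$, by $O(1/j^2)$ and $O(1/j)$ respectively. (Equivalently, one obtains $\d\alpha_1/\d\theta=\kappa_0 r/\sin\alpha_1-1$ directly from the linearized geodesic flow: the impact arclength satisfies $\d s_1/\d\theta=r/\sin\alpha_1$, while $\alpha_1$ is the boundary tangent angle at $\widehat{x_0}$ minus $\theta$; this is the geometric content of Figure \ref{picture of billiard flow}.)

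Differentiating, $\d\alpha_1/\d\theta=\theta/\sqrt{\theta^2+2\kappa_0 h}+O(1/j)=\theta/\alpha_1+O(1/j)$, so everything comes down to showing that $\theta/\alpha_1$ is bounded away from $0$. Taking the implied constant in $d(x_0)=O(1/j^2)$ small relative to $\Omega$---which is automatic in the application to Theorem \ref{8 orbit lemma}, where $d(x_0)=O(1/j^4)$---one has $2\kappa_0 h\le\tfrac12(C_1/j)^2\le\tfrac12\alpha_1^2$; combined with $\alpha_1^2=\theta^2+2\kappa_0 h+O(1/j^3)$ this forces $\theta^2\ge\tfrac13\alpha_1^2$ once $j$ is large, whence $\d\alpha_1/\d\theta\ge 1/\sqrt3-O(1/j)\ge c$ for $j\ge j_3$, with $c=c(\Omega)>0$ (and $\le-c$ for the opposite sign of $\theta$, so $|\d\alpha_1/\d\theta|\ge c$ in every case---the form that is actually used in the twist argument that follows). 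This quantitative monotonicity of the first reflection angle in the initial direction is precisely the input that, together with the twist property of $\beta$, will let the subsequent lemmas make such orbits wind once around $\Omega$.

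The step I expect to be the main obstacle is the uniformity of these remainder estimates: one must check that the error in $\alpha_1\approx\sqrt{\theta^2+2\kappa_0 d(x_0)}$ and in its $\theta$-derivative are genuinely lower order---than the main term and than $1/j$, respectively---simultaneously over all admissible $x_0$, all $\xi(\theta)$ generating a counterclockwise $j$-reflection, approximately one-rotation orbit, and all large $j$. This hinges on the a priori bounds $r=O(1/j)$ and $\alpha_1\asymp 1/j$, themselves consequences of \eqref{curvature bound} and Lemma \ref{L3}, and on $\kappa$ being bounded above and below; granting these, the remaining work is a routine Taylor expansion. The degenerate boundary case $d(x_0)=0$ is included, where $\alpha_1=|\theta|$ to leading order and $\d\alpha_1/\d\theta=\pm1$.
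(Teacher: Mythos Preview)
Your approach is correct and takes a genuinely different, more transparent route than the paper. The paper works in graph coordinates $(t,b)$ with $b=d(x_0)$, computes the logarithmic $t$-derivative of $\cos\alpha_1$ directly from \eqref{cos angle}, shows this derivative has a favorable sign via a fourth-order expansion of the numerator $g(t,b)$, and then converts $\partial/\partial t$ to $\partial/\partial\theta$ using \eqref{d x d xi}. The uniform lower bound is extracted only after a further argument relating $t$ to $b$ through the identity $\log\cos\alpha_1=-b^2/(2t^2)+O(b)$, which yields $t\sim b/\alpha_1$ for the admissible orbits. You bypass all of this by obtaining the closed form $\alpha_1=\sqrt{\theta^2+2\kappa_0 h}+O(1/j^2)$ at the outset, from which $\partial\alpha_1/\partial\theta=\theta/\alpha_1+O(1/j)$ drops out immediately; the whole argument then reduces to the single inequality $2\kappa_0 h\le\tfrac12\alpha_1^2$. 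Your formula is in fact equivalent to the paper's relation $b^2/t^2\sim\alpha_1^2$ once one notes $\theta\approx b/t-t$ in graph coordinates, but it packages the geometry much more cleanly.

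Both arguments share the same hidden hypothesis, which you make explicit and the paper does not: the implied constant in $d(x_0)=O(1/j^2)$ must be small relative to $C_1^2/\kappa_{\max}$, with $C_1$ the lower angle bound from Lemma~\ref{L3}. Without this, the orbit with $\theta=0$ (first reflection angle $\alpha_1=\sqrt{2\kappa_0 h}$) can itself satisfy $\alpha_1\in[C_1/j,C_2/j]$, and there $\partial\alpha_1/\partial\theta=O(1/j)$ rather than bounded below. The paper's estimate $t\sim\sqrt{2}rb_0/\alpha_1$ in \eqref{x sim} tacitly assumes the $O(b)$ correction in $\alpha_1^2=b^2/t^2+O(b)$ is negligible compared to $\alpha_1^2$, which is exactly the same smallness condition. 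Your observation that the application (Theorem~\ref{8 orbit lemma}) uses $d(x_0)=O(1/j^4)$ anyway, so this is harmless, is the right resolution; the paper makes the same move when it later shrinks $\widetilde U_j$. So your caveat is not a defect of your proof but an honest accounting of what is actually being shown.
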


\begin{proof}
Let $x_0 \in \text{int} (\Omega)$ be near the boundary and consider the point ${x} \in \d \Omega$ which minimizes the Euclidean distance $|x_0- x|$ among all boundary points. We then apply an affine change of coordinates $y \mapsto A(y - x)$ with $A \in SO(2)$ so that $x$ is mapped to the origin and the positively oriented unit tangent vector at ${x}$ is mapped to $(1,0)$. The vector ${x} - x_0$ is perpendicular to both $T_x^*\d \Omega$ and $T_{x_0}^* d^{-1}(|x - x_0|)$ (see Figure \ref{picture of billiard flow}). Therefore, the point $A(x_0 - x)$ lies on the positive vertical axis and we denote it by $(0,b)$. In these coordinates, the boundary $\d \Omega$ is locally parametrized by the graph of a smooth convex function $f$. Since $f'(0) = 0$, we see that $f(t) = \kappa t^2 + R(t)$, with $\kappa$ denoting the curvature at the point corresponding to $(0,0)$ and $R(t) = O(|t|^3)$. Dilating by a factor of $\kappa^{-1}$, we see that in rescaled coordinates, the function becomes $f(t) =  t^2 + R(t)$, with $R(t)$ again of order $O(|t|^3)$.
\\
\\
In graph coordinates, $\alpha_1$ becomes the angle between $(t, f(t)) - (0,b)$ and the negatively oriented cotangent line $T^*_{(t,f(t))} \d \Omega$ as illustrated in Figure \ref{picture of billiard flow}. This shows that
\begin{align}\label{cos angle}
\cos \alpha_1 = \frac{(t,f(t) - b) \cdot (1, f'(t))}{\sqrt{t^2 + (f(t) - b)^2} \sqrt{1+ (f'(t))^2}} \geq 0,
\end{align}
since $0 \leq \alpha_1 \leq \pi/2$. Recall that $\xi_0^\pm$ are the unit covectors in the positive (+), resp. negative (-), cotangent line $T^*_{x_0} d^{-1}(|x- x_0|)$ corresponding to $\theta = 0$ and $\theta = \pi$ respectively. For now, we only consider counterclockwise orbits obtained by perturbing the orbit emanating from $(x_0,\xi_0^+)$, as they correspond to $-\pi/2 \leq \theta \leq \pi/2$ in the statement of the lemma. Denote the points $a_- = \min\{f^{-1}(b)\}$ and $a_+ = \max \{f^{-1}(b)\}$. We first show that $d \alpha_1/dt < 0$ on $[0,a_+]$ so that the angle of incidence at the first impact point on the boundary is increasing as the initial covector of the trajectory winds clockwise in $S_{x_0}^*\Omega$ away from $\xi_0^+$, i.e. $d\alpha/dt < 0$ on $[0,a_+]$.
\\
\\
Noting that $t$ and $\theta$ are negatively correlated, it suffices to show that the logarithmic $t$ derivative of $\cos \alpha_1(t)$ is positive:
\begin{align}\label{log cos derivative}
\frac{d}{dt} \log \cos \alpha_1(t) = \frac{(1 + f'' (f - b) + (f')^2)}{t + f' (f-b)} - \frac{t + f'(f-b)}{t^2 + (f-b)^2} - \frac{f'f''}{1 + (f')^2}.
\end{align}
Multiplying \eqref{log cos derivative} through by a common denominator, which is positive for $t, b$ small, we obtain
\begin{align}\label{logarithmic derivative times common den}
\begin{split}
g(t,b) :=& (t + f'(f - b))(t^2 + (f - b)^2)(1 + (f')^2) \frac{d}{dt} \log \cos \alpha(x)\\
=& (1 + f'' (f - b) + (f')^2)(t^2 + (f - b)^2)(1 + (f')^2)\\
&- (t + f'(f - b))^2(1 + (f')^2)  - f' f''(t +f'(f-b))(t^2 + (f-b)^2).
\end{split}
\end{align}
To show that \eqref{logarithmic derivative times common den} is positive, we plug in our second order Taylor approximation for $f$ and expand $g$ to fourth order in a parabolic neighborhood of $t= 0, b = 0$:
\begin{gather}\label{fourth order expansion}
g(t,b) = 3t^4 + 10t^2 b^2 -2b^3 + b^2 + O((t^2 + b^2)^{5/2}).
\end{gather}
As $f(t) = t^2 + O(t^3)$, we see that $a_\pm = \pm b^{1/2} + O(b^{3/2})$, so we choose $b \leq 1/j^2$ sufficiently small in order to make \eqref{fourth order expansion} positive in a parabolic neighborhood of origin. Examining the initial set up, choosing $b$ small amounts to choosing $x_0$ close to the boundary, all of which can be done uniformly in $x_0$ and the curvature $\kappa$. Choosing a uniform $b$ gives us a tubular neighborhood of the boundary $\widetilde{U}_j = \{x: 0< d(x,\d \Omega) < j^{-2} \}$, as mentioned in the begining of the section. We will later shrink $\widetilde{U}_j$ by a factor of $j^{-2}$, following Lemma \ref{clockwise int points}.
\\
\\
While the lower bound in \eqref{fourth order expansion} is of order $b^2 = O(j^{-4})$, we in fact need uniform bounds for $\d \alpha_1/ \d \theta$ which we now provide. The prefactor in \eqref{logarithmic derivative times common den} is nonvanishing for $t \neq 0, t \neq a_+$ and in particular, for all $t$ corresponding to an orbit which makes approximately one rotation in $j$ reflections. We first need to compare $\d/\d t$ and $\d / \d \theta$ in terms of $b$, which depends on $j$. In graph coordinates, $\theta$ becomes the angle of the first link from the horozontial axis (see Figure \ref{picture of billiard flow}). Hence, $\theta = \arctan((b- f(t))/t)$ and
\begin{align*}
\frac{\d \theta}{\d t} = \frac{1}{1 + (b - f(t))^2/t^2 } \left( - \frac{ f'(t)}{t} - \frac{b - f(t)}{t^2}\right),
\end{align*}
or equivalently,
\begin{align}\label{d x d xi}
\frac{\d t}{\d \theta} = - \left(\frac{t^2 + (b - f(t))^2}{t f'(t) + b - f(t)}\right),
\end{align}
where $t$ is defined implicitly as a function of $\theta$. Plugging \eqref{d x d xi} into  \eqref{log cos derivative} and using that \eqref{fourth order expansion} is bounded below by $b^2/2$ for $b$ sufficiently small, we see that
\begin{align}\label{dalphadxi}
\begin{split}
\frac{\d \alpha}{\d \theta} &\geq \frac{ b^2 (t^2 + (b - f(t))^2) }{2 (tf'(t) + b - f(t)) (t + f'(t)(f(t) - b)) (t^2 + (f(t) - b)^2 )(1 + (f'(t))^2)  \tan \alpha_1  }\\
&=  \frac{ b^2 }{2 (tf'(t) + b - f(t) ) (t + f'(t)(f(t) - b)) (1 + (f'(t))^2)  \tan \alpha_1  }.
\end{split}
\end{align}
The denominator in \eqref{dalphadxi} is nonvanishing and can be estimated above on $(0, a_+]$ by
\begin{align}
c' t b \tan \alpha_1 + O(b^{5/2}).
\end{align}
for some $c' > 0$, independent of $j$. Observe that if $t \leq b$ for example, then the denominator is bounded above by $c' b^2$ and $\d \alpha_1/ \d \theta \geq c''$, for some $c'' > 0$ which is also independent of $j$.
\\
\\
We now find a similar upper bound on $t(\theta)$ in terms of $b$, corresponding to the set of covectors $\xi(\theta)$ which produce orbits of $j$ reflections and approximately one rotation. Let $b = r b_0$, where $b_0 = j^{-2}$ for $r \in (0,1]$. Recall from Lemma \ref{L3} that there exist constants $C_1, C_2 > 0$ such that for all orbits making approximately one rotation and $j$ reflections, we have $C_1/j \leq \alpha_k \leq C_2/j$ for each $1 \leq k \leq j$. Hence,
\begin{align}\label{b_0 bound}
C_1 b_0^{1/2} \leq \alpha_1 \leq C_2 b_0^{1/2}.
\end{align}
Observe that equation \eqref{cos angle} gives
\begin{align}\label{log cos alpha}
\begin{split}
\log \cos \alpha_1(t) &= \log(t + f'(t)(f(t) - b)) - 1/2 \log (t^2 + (f(t)-b)^2)\\
&- 1/2 \log(1 +(f'(t))^2).
\end{split}
\end{align}
Denote the terms in \eqref{log cos alpha} above by $A, B$ and $C$. Then,
\begin{align*}
\begin{split}
A &= \log( t + (2t + O(t^2) ) (t^2 + O(t^3) - b ))\\
&= \log t + \log(1 + 2t^2 - 2b + O(t^3) + O(b))\\
&= \log t + 2t^2 + O(t^3) + O(b).
\end{split}
\end{align*}
Similarly,
\begin{align*}
\begin{split}
B &= -1/2 \log(t^2 + t^4 - 2t^2b + b^2 + O(t^5) + O(t^3 b) )\\
&= -\log t - 1/2 (t^2 - 2b + b^2/t^2 + O(t^3) + O(tb))
\end{split}
\end{align*}
and
\begin{align*}
C = - \log(1 + (2t + O(t^2)) ) = -4t^2 + O(t^3).
\end{align*}
As $t = O(b^{1/2})$, we have
\begin{align*}
A + B + C = \frac{-b^2}{2t^2} + O(b).
\end{align*}
On the other hand, 
\begin{align*}
\begin{split}
A + B + C &= \log \cos \alpha_1 = \log (1 - a_1^2 / 2 + O(\alpha_1^4)).\\
&= - \alpha_1^2/2 +  O(b_0^{4/N}),
\end{split}
\end{align*}
which implies that 
\begin{align*}
b^2/t^2 = r^2 b_0^2 / t^2 \sim \alpha_1^2 / 2.
\end{align*}
In particular, \eqref{b_0 bound} implies 
\begin{align}\label{x sim}
t \sim \frac{\sqrt{2}r b_0}{\alpha_1} \in \left[\frac{\sqrt{2} rb_0^{1/2}}{C_2} , \frac{\sqrt{2} rb_0^{1/2}}{C_1} \right].  
\end{align}
Note also that $\tan \alpha_1 \leq 2 \alpha_1 \leq 2 C_2 b_0^{1/2}$. Combining this with \eqref{x sim}, we see that the denominator in equation \eqref{dalphadxi} is bounded above by
\begin{align}
c' t b \tan \alpha_1 + O(b^{5/2}) \leq c''' b  (r b_0^{1 - 1/2}) b_0^{1/2} = c''' b^2,
\end{align}
for some $c''' > 0$ independent of $j$. This establishes the claim that $\d \alpha/ \d \theta$ is uniformly bounded below on the set of initial covectors corresponding to orbits of $j$ reflections and approximately one rotation.  
\end{proof}

\subsection{Monotonicity and the twist property}
The significance of increasing the angle of reflection at the first impact point on the boundary is that after $j-1$ reflections at the boundary, the $jth$ impact point winds around $\d \Omega$ with approximate speed $j$. This is essentially the twist property of the billiard map (see \cite{Tabachnikov} for a general definition of twist maps). The following lemma makes this notion quantitative.
\begin{lemm}\label{boundary monotonicity}
There exists $j_4 = j_4(\Omega) \in \N$ such that if a trajectory makes $j \geq j_4$ reflections at the boundary and approximately one rotation, then the Jacobian of the iterated billiard map satisfies
$$
D \widehat{\beta}^j (x_1 ,\alpha_1) = \begin{pmatrix}
1 & j\\
0 & 1
\end{pmatrix}
+ O(1/j)
$$
in Lazutkin coordinates lifted to the universal cover. In particular, there exist constants $C_3, C_4, C_5, C_6 > 0$ depending only on $\Omega$ such that for any $(x_1, \alpha_1)$, we have
\begin{align*}
C_3 j \leq \frac{\d}{\d \alpha_1} \pi_1 \widehat{\beta}^j(x_1, \alpha_1) \leq C_4 j\\
C_5 \leq \frac{\d}{\d \alpha_1 } \pi_2 \widehat{\beta}^j(x_1, \alpha_1)  \leq C_6,
\end{align*}
where $\pi_1$ and $\pi_2$ denote projections onto the first and second components respectively.
\end{lemm}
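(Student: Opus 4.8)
The plan is to differentiate the $j$-fold iterate along the orbit by the chain rule, substitute the Lazutkin normal form for each single-step differential, and then control the resulting product of $j$ nearly parabolic matrices by one well-chosen diagonal rescaling. First I would write, in the lifted Lazutkin coordinates (the covering map does not change differentials),
\[
D\widehat\beta^j(x_1,\alpha_1)=D\widehat\beta(x_j,\alpha_j)\,D\widehat\beta(x_{j-1},\alpha_{j-1})\cdots D\widehat\beta(x_1,\alpha_1),
\]
where $(x_k,\alpha_k)=\widehat\beta^{k-1}(x_1,\alpha_1)$ are the successive reflection data of the orbit. Because the orbit makes $j$ reflections and approximately one rotation, Lemma \ref{L3} gives $C_1/j\le\alpha_k\le C_2/j$ for $1\le k\le j$. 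Differentiating the normal form $x_2=x_1+\alpha_1+\alpha_1^3 f$, $\alpha_2=\alpha_1+\alpha_1^4 g$ and using that $f,g$ and their first derivatives are bounded on the relevant compact region, I obtain $M_k:=D\widehat\beta(x_k,\alpha_k)=T+E_k$ with
\[
T=\begin{pmatrix}1&1\\0&1\end{pmatrix},\qquad E_k=\begin{pmatrix}O(j^{-3})&O(j^{-2})\\ O(j^{-4})&O(j^{-3})\end{pmatrix},
\]
the implied constants depending only on $\Omega$.

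The obstacle, and the step I expect to be the crux, is that $\|T^m\|\asymp m$, so a naive perturbation estimate for $\prod_k M_k$ would accumulate an error of size $\sim j\cdot\|E_k\|\cdot\|T^{j}\|=O(1)$ instead of the required $O(1/j)$. The remedy is to conjugate by $\Lambda:=\operatorname{diag}(1,j)$: then $\Lambda T\Lambda^{-1}=\widetilde T:=\begin{pmatrix}1&1/j\\0&1\end{pmatrix}$, for which $\|\widetilde T^{\,m}\|\le 2$ uniformly over $0\le m\le j$ and $\widetilde T^{\,j}=\begin{pmatrix}1&1\\0&1\end{pmatrix}$, while a check of the four entries shows $\Lambda E_k\Lambda^{-1}=\widetilde E_k$ has $\|\widetilde E_k\|=O(j^{-3})$ — the dangerous lower-left entry $O(\alpha_k^4)=O(j^{-4})$ supplied by the Lazutkin normal form is calibrated precisely so that multiplying it by $j$ still leaves it of size $O(j^{-3})$. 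In other words, the conjugation rebalances a product of linearly growing factors into a product of uniformly bounded factors with tiny perturbations.

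Then the remaining estimate is routine. With $P_k:=\widetilde T+\widetilde E_k$, telescoping gives
\[
\Big(\prod_{k=j}^{1}P_k\Big)-\widetilde T^{\,j}=\sum_{k=1}^{j}\widetilde T^{\,j-k}\,\widetilde E_k\,P_{k-1}\cdots P_1,
\]
and since $\|P_{k-1}\cdots P_1\|\le\prod_{l<k}(1+O(j^{-1}))\le e^{O(1)}$, the right-hand side has norm $\le\sum_{k=1}^{j}2\cdot O(j^{-3})\cdot O(1)=O(j^{-2})$. Conjugating back, $\Lambda^{-1}(\,\cdot\,)\Lambda$ multiplies the $(1,2)$ entry by $j$ and divides the $(2,1)$ entry by $j$, so the $O(j^{-2})$ error becomes $\begin{pmatrix}O(j^{-2})&O(j^{-1})\\ O(j^{-3})&O(j^{-2})\end{pmatrix}$, which is $O(1/j)$ in norm, and
\[
D\widehat\beta^j(x_1,\alpha_1)=\begin{pmatrix}1&j\\0&1\end{pmatrix}+O(1/j),
\]
uniformly over all orbits of $j$ reflections making approximately one rotation. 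Reading off the $(1,2)$ and $(2,2)$ entries yields $\partial_{\alpha_1}\pi_1\widehat\beta^j=j+O(1/j)$ and $\partial_{\alpha_1}\pi_2\widehat\beta^j=1+O(1/j)$, so the stated bounds hold with, e.g., $C_3=C_5=\tfrac12$ and $C_4=C_6=2$ once $j_4=j_4(\Omega)$ is large enough.

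To summarize the difficulty: everything except the rescaling is bookkeeping with the explicit constants from Lemmas \ref{angle bound 1} and \ref{L3}; the one genuinely substantive point is recognizing that off-the-shelf perturbation theory fails because the unperturbed iterate $T^{j}$ grows linearly, and that the graded entry sizes of $E_k$ coming from the Lazutkin normal form are exactly what makes the single conjugation $\operatorname{diag}(1,j)$ turn the problem into a trivially convergent one.
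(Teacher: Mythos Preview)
Your proof is correct and takes a genuinely different route from the paper's. Both arguments begin with the chain rule and the entrywise size of the single-step differential coming from the Lazutkin normal form, but they diverge in how the product of $j$ nearly-parabolic matrices is controlled.

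The paper writes each factor as $I_2+B_k$ with $B_k=N+O(j^{-2})$, $N=\begin{pmatrix}0&1\\0&1\end{pmatrix}$ nilpotent, expands the product combinatorially, and uses $N^2=0$ to kill the dangerous terms; the remainder is then bounded by a binomial-type sum. This works but the bookkeeping is somewhat loose (the expansion is written as though the $B_k$ were equal, and the estimate $(N+O(j^{-2}))^k=O(j^{-2(k-1)})$ really relies on the graded entry sizes of the perturbation, not just its norm). Your approach instead writes each factor as $T+E_k$ and conjugates by $\Lambda=\operatorname{diag}(1,j)$, which simultaneously tames the linear growth of $T^m$ and exploits the graded structure of $E_k$ in one stroke: after rescaling, both the base matrix and the perturbations are uniformly small, and a single telescoping estimate finishes. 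This is cleaner, yields sharper entrywise errors (your $(2,1)$ remainder is $O(j^{-3})$ rather than $O(j^{-1})$), and makes transparent exactly which feature of the Lazutkin normal form---the $O(\alpha^4)$ in the lower-left entry---is essential. The paper's nilpotency argument has the virtue of being coordinate-free in spirit, but your rescaling is the more robust and informative computation here.
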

\begin{proof}
In Lazutkin coordinates, $\beta(x_k,\alpha_k) = (x_k + \alpha_k + \alpha_k^3 f(x_k,\alpha_k), \alpha_k + \alpha_k^4 g(x_k,\alpha_k))$. Hence,
\begin{align}\label{Dbetaj}
D\widehat{\beta}^j(x_1,\alpha_1) = \prod_{k = 1}^j \begin{pmatrix}
1 + \alpha_k^3 \frac{\d f}{\d x}(x_k, \alpha_k) & 1 + 3\alpha_k^2 f(x_k,\alpha_k) + \alpha_k^3 \frac{\d f}{\d \alpha}(x_k,\alpha_k)\\
\alpha_k^4 \frac{\d g}{\d x}(x_k,\alpha_k) & 1 + 4 \alpha_k^3 g(x_k,\alpha_k) + \alpha_k^4 \frac{\d g}{\d \alpha}(x_k, \alpha_k)
\end{pmatrix},
\end{align}
where $(x_k, \alpha_k) = \widehat{\beta}^k(x_1,\alpha_1)$ and $f,g$ are extended to be $\ell$ periodic in $x$ on $\R \times [0,\pi]$. Each of the terms in the product can be decomposed into $(I_2 + N + R_k)$, where $I_2$ is the $2 \times 2$ identity matrix, $N$ is the nilpotent matrix
\begin{align}
N = \begin{pmatrix}
0 & 1\\
0 & 0
\end{pmatrix}
\end{align}
and the remainder term is
\begin{align*}\label{Bk}
	R_k =  \begin{pmatrix}
		\alpha_k^3 \frac{\d f}{\d x}(x_k, \alpha_k) &  3\alpha_k^2 f(x_k,\alpha_k) + \alpha_k^3 \frac{\d f}{\d \alpha}(x_k,\alpha_k)\\
		\alpha_k^4 \frac{\d g}{\d x}(x_k,\alpha_k) & 4 \alpha_k^3 g(x_k,\alpha_k) + \alpha_k^4 \frac{\d g}{\d \alpha}(x_k, \alpha_k)
	\end{pmatrix} = \begin{pmatrix}
		O(j^{-3}) & O(j^{-2})\\
		O(j^{-4}) & O(j^{-3})
	\end{pmatrix},
\end{align*}
which is small in norm. First choose $C$ and $j_4$ so that for all $j \geq j_4$ and $1 \leq k \leq j$, $\| R_k \|_{\ell^\infty} \leq Cj^{-2}$ (which is possible by Lemma \ref{angle bound 1}). All matrix norms below are assumed to be $\ell^\infty$. We claim that for $j \geq j_4$ and each $p = 1, \cdots, j$,
\begin{align}
	\begin{split}
	&\left\|  \prod_{k = 1}^p \begin{pmatrix}
		1 + \alpha_k^3 \frac{\d f}{\d x}(x_k, \alpha_k) & 1 + 3\alpha_k^2 f(x_k,\alpha_k) + \alpha_k^3 \frac{\d f}{\d \alpha}(x_k,\alpha_k)\\
		\alpha_k^4 \frac{\d g}{\d x}(x_k,\alpha_k) & 1 + 4 \alpha_k^3 g(x_k,\alpha_k) + \alpha_k^4 \frac{\d g}{\d \alpha}(x_k, \alpha_k)
	\end{pmatrix} -
\begin{pmatrix}
	1 & p\\
	0 & 1
\end{pmatrix} \right\|\\
&\leq \frac{C(1 + 2p) (1 + C/j^2)^p}{j^2}.
\end{split}
\end{align}
The claim is clearly true for $p = 1$. Assuming it holds for $1, \cdots, p-1$, we then have
\begin{align*}
	\begin{split}
		&\left\|  \prod_{k = 1}^p (I_2 + N + R_k) - (I_2 + p N)\right \| \\
		&= \left\| \left(\prod_{k = 1}^{p-1} (I_2 + N + R_k) - (I_2 + (p-1) N)\right) (I_2 + N + R_p) - (I_2 + (p-1) N) R_p \right\|\\
		&\leq \left(\frac{C (1 + 2(p -1))(1 + C/j^2)^{p-1}}{j^2} \right) (1 + C/j^2)  + \|(I_2 + (p-1)N) R_p\|\\
		& \leq \left(\frac{C (1 + 2p )(1 + C/j^2)^{p}}{j^2} \right) - \left( \frac{2 C (1 + C/j^2)^p}{j^2 }\right) + \frac{2 C}{j^2}\\
		& \leq \left(\frac{C (1 + 2p) (1 + C/j^2)^{p}}{j^2}\right).
	\end{split}
\end{align*}
Hence, the claim also follows for $p$. Having shown this bound for all $1 \leq p \leq j$, $(j \geq j_4)$, it follows from the case $p = j$ that
\begin{align}
	\| D \hat{\beta}^j(x_1, \alpha_1) - (I_2 + j N) \| \leq \left(\frac{C (1 + 2j) (1 + C/j^2)^{j}}{j^2}\right) \leq \frac{3 C e^C}{j}.
\end{align}
Noting that angular derivatives $\d \pi_1 \widehat{\beta}^j / \d \alpha_1$ and $ \d \pi_2 \widehat{\beta}^j / \d \alpha_1$ are the $(1,2)$ and $(2,2)$ components of $D\widehat{\beta}^j$ respectively concludes the proof of the lemma.
\end{proof}

\begin{rema}
As $j$ increases, Lemma \ref{boundary monotonicity} shows that the Poincar\'e map associated to a periodic orbit of rotation number $1/j$ becomes more and more degenerate, corresponding to the accumulation of caustics at the boundary. In fact, there exist higher order Lazutkin coordinates for which the remainder above could be replaced by $O(j^{-N})$ for any $N \in \N$. However, this is not needed in the remainder of the paper.
\end{rema}

\begin{figure}
\begin{center}
\begin{tikzpicture} [domain=0:2, scale = 1.1]
\begin{scope}[rotate =0]
\coordinate (x) at (2,0);
\coordinate (y) at (0,2);
\coordinate (z) at (3.2,-1.2);
\coordinate (a) at (-2.75,2);
\coordinate (b) at (2.75,2);
\coordinate (c) at (5.2,0.133333333);
\coordinate (d) at (1.2, -2.5333333333);
\draw[black, thick] (x)--(z);
\draw[black, thick] (x)--(y);
\draw[black, thick] (a)--(b) node[above]{$T_{x_1}^*\d \Omega$};
\draw[black, thick] (c)--(d) node[below right]{$T_{x_{-1}}^*\d \Omega$};
\draw[black, thick, ->] (x)--(2,0.9) node[above right]{$\xi_0^+$};
\draw[black, thick] (0,0) ellipse (4cm and 2cm);
\draw[black, dashed] (0,0) ellipse (2cm and 1cm);
\draw[black, dashed] (x) circle (0.9cm);
\draw[fill=black] (x) circle (2pt) node[label = {[xshift = 0.7cm, yshift = -0.42cm] $x_0$}]{};
\draw[fill=black] (z) circle (2pt) node[below right]{$x_{-1}$};
\draw[fill=black] (y) circle (2pt) node[above left]{$x_1$};
\fill[fill opacity = 0.8, fill=green!20!white]
    (2,0) -- (2 + 0.9 * cos 110, 0.9*sin 110 ) arc (110:160:0.9cm) -- cycle;
\fill[fill opacity = 0.8, fill=green!20!white]
    (2,0) -- (2 + 0.9 * cos 200, 0.9*sin 200 ) arc (200:250:0.9cm) -- cycle;
\fill[fill opacity = 0.8, fill=blue!20!white]
    (2,0) -- (2 + 0.9 * cos 290, 0.9*sin 290 ) arc (290:340:0.9cm) -- cycle;
\fill[fill opacity = 0.8, fill=blue!20!white]
    (2,0) -- (2 + 0.9 * cos 20, 0.9*sin 20 ) arc (20:70:0.9cm) -- cycle;
\path 
  pic["$\alpha_1$",draw=black,<->,angle eccentricity=1.5,angle radius=0.75cm] {angle=x--y--b};
\path  
  pic["$\alpha_{-1}$",draw=black,<->,angle eccentricity=1.5,angle radius=0.5cm] {angle=c--z--x};
\end{scope}
\end{tikzpicture}
\end{center}
\caption{A counterclockwise perturbation of $\xi_0^+$. The green cones at $x_0$ are the length one covectors in $C_{x_0,+}^*(\Omega;j)$ and the blue cones at $x_0$ are the length one covectors in $C_{x_0,-}^*(\Omega;j)$.}
\label{picture of counterclockwise perturbation}
\end{figure}
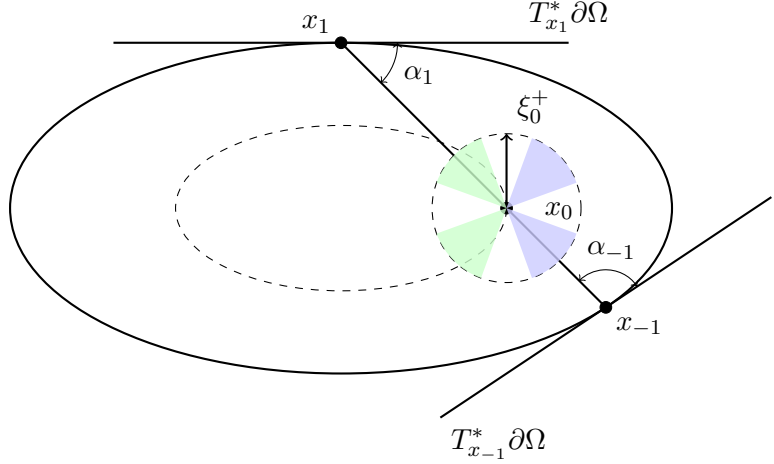
\noindent The point $x_j$ can be obtained in two ways. The first way is by flowing forwards to the boundary point $x_1 := \pi_1 g^{t_1^+}(x_0, \xi)$, iterating the billiard map $j$ times and projecting onto the first component. The second way is by flowing backwards to the boundary point $x_{-1} := g^{t_1^-}(x_0, \xi)$ and iterating the billiard map $j+1$ times. If $\xi$ is rotated in the clockwise direction away from $\xi_0^+$, it is convenient to use the angle $\alpha_{1}$ at $x_{1}$, since it was shown in Lemma \ref{L1} that $\alpha_{1}$ is increasing in $\theta$. If $\xi$ is rotated counterclockwise away from $\xi_0^+$, it is instead advantageous to use the point $(x_{-1}, \alpha_{-1})$, since this again corresponds to a \textit{clockwise} perturbation of $\xi_0^-$. In either regime $\theta < \theta_0^+$ or $\theta > \theta_0^+$, we can then take advantage of the monotonicity of $x_j$ in $\alpha_{\pm 1}$ as $\xi$, or equivalently $\theta$, varies. The next lemma shows that the point $x_j$ is monotonically increasing for all orbits making approximately one rotation as $\xi(\theta)$ winds in either direction.

\begin{lemm}\label{winding in xi}
There exist $C_7 > 0$ and $j_5 = j_5(\Omega) \in \N$ such that for all $\theta \in (0, \pi/2)$ corresponding to a $j \geq j_5$ reflection orbit emanating from $(x_0, \xi(\theta))$ which makes approximately one rotation, we have $\d x_j / \d \theta \geq C_7 j$. Similarly, for $\theta \in (-\pi/2, 0)$ corresponding to an orbit of $j$ reflections and approxmately one rotation, we have $\d x_j / \d \theta \leq - C_7 j$.
\end{lemm}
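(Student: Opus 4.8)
The plan is to compute $\d x_j/\d\theta$ by the chain rule, splitting each orbit at a first reflection on the boundary and then feeding in the monotonicity estimates of Lemmas \ref{L1} and \ref{boundary monotonicity}. For $\theta\in(0,\pi/2)$, following the discussion preceding the lemma, I would use the forward first reflection: set $x_1=x_1(\theta)=\pi_1 g^{t_1^+}(x_0,\xi(\theta))$ and let $\alpha_1=\alpha_1(\theta)$ be the angle of reflection there, so that $x_j=\pi_1\widehat\beta^j(x_1(\theta),\alpha_1(\theta))$. Differentiating in $\theta$,
\[
\frac{\d x_j}{\d\theta}\;=\;\Bigl(\frac{\d}{\d x_1}\pi_1\widehat\beta^j\Bigr)\,\frac{\d x_1}{\d\theta}\;+\;\Bigl(\frac{\d}{\d\alpha_1}\pi_1\widehat\beta^j\Bigr)\,\frac{\d\alpha_1}{\d\theta},
\]
and I would estimate the three factors on the right separately, all uniformly in $j$, $x_0$ and the admissible $\theta$.

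By Lemma \ref{boundary monotonicity}, the first partial derivative is $1+O(1/j)$, hence bounded, and the second lies in $[C_3 j,C_4 j]$; by Lemma \ref{L1}, $\d\alpha_1/\d\theta\ge c>0$. The only ingredient not yet available is a uniform bound $|\d x_1/\d\theta|\le M$ with $M=M(\Omega)$, which I would extract from formula \eqref{d x d xi} in the proof of Lemma \ref{L1}: in the graph coordinates there, with $t$ the graph parameter of $x_1$ and $b=d(x_0,\d\Omega)$ after the rescalings of that proof, one has $b-f(t)=t\tan\theta>0$ for $\theta>0$ and $f'(t)=2t+O(t^2)$ by strict convexity, so the denominator $tf'(t)+(b-f(t))=t(f'(t)+\tan\theta)$ of \eqref{d x d xi} is a sum of positive terms with no cancellation and
\[
\Bigl|\frac{\d t}{\d\theta}\Bigr|\;=\;\frac{t^2\sec^2\theta}{t(f'(t)+\tan\theta)}\;\le\;\frac{t\sec^2\theta}{f'(t)}\;=\;\frac{\sec^2\theta}{2+O(t)}\;=\;O(1),
\]
using that $t$ and $\theta$ are small on the relevant range; passing from $t$ to the arclength (or Lazutkin) coordinate of $x_1$ multiplies only by a factor bounded above and below near the boundary. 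Combining the three bounds yields $\d x_j/\d\theta\ge C_3 c\,j-M\ge\tfrac12 C_3 c\,j$ once $j\ge j_5$, where $j_5$ is taken large enough to absorb the $O(1/j)$ term and to force $M\le\tfrac12 C_3 c\,j_5$; one then sets $C_7=\tfrac12 C_3 c$.

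For $\theta\in(-\pi/2,0)$ I would run the identical argument through the \emph{backward} first reflection $x_{-1}=\pi_1 g^{t_1^-}(x_0,\xi(\theta))$ with angle $\alpha_{-1}$, writing $x_j=\pi_1\widehat\beta^{j+1}(x_{-1}(\theta),\alpha_{-1}(\theta))$ and applying Lemma \ref{boundary monotonicity} with $j$ replaced by $j+1$. The only change is the orientation: decreasing $\theta$, i.e.\ rotating $\xi$ counterclockwise away from $\xi_0^+$, is a clockwise perturbation of $\xi_0^-$ for the backward orbit, so the mirror image of Lemma \ref{L1} (the reflection $t\mapsto-t$, which interchanges the two senses of rotation) gives $\d\alpha_{-1}/\d\theta\le-c$, while $\frac{\d}{\d\alpha_{-1}}\pi_1\widehat\beta^{j+1}\asymp j>0$ and $|\d x_{-1}/\d\theta|\le M$ as before; hence $\d x_j/\d\theta\le -C_3 c\,j+M\le-C_7 j$ for $j\ge j_5$.

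I expect the main obstacle to be the uniform-in-$j$ bound $|\d x_1/\d\theta|=O(1)$: one has to check that the denominator of \eqref{d x d xi} does not degenerate over the entire set of $\theta$ producing approximately one rotation, which it does not because $tf'(t)$ and $b-f(t)$ are strictly positive there (no cancellation) and each is comparable to a piece of the numerator. A secondary point is the orientation bookkeeping for $\theta\in(-\pi/2,0)$, where the sign flip in the conclusion rests on the fact that $\alpha_{-1}$ increases exactly as $|\theta|$ increases, so that $\d\alpha_{-1}/\d\theta<0$ there.
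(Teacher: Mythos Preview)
Your proposal is correct and follows essentially the same approach as the paper's own proof: both apply the chain rule to $x_j=\pi_1\widehat\beta^j(x_1(\theta),\alpha_1(\theta))$, invoke Lemma~\ref{boundary monotonicity} for the $O(j)$ growth of $\d x_j/\d\alpha_1$ and the $1+O(1/j)$ size of $\d x_j/\d x_1$, invoke Lemma~\ref{L1} for the uniform lower bound on $\d\alpha_1/\d\theta$, and then establish a uniform $O(1)$ bound on $|\d x_1/\d\theta|$ in graph coordinates via \eqref{d x d xi}. The only cosmetic difference is how that last $O(1)$ bound is organized: you rewrite the quotient as $t\sec^2\theta/(f'(t)+\tan\theta)$ and drop the positive $\tan\theta$ term, which requires (and you correctly note) that $\theta$ is small on the admissible range; the paper instead bounds the quotient directly and arrives at $1+O(b)\le 2$. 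Both routes are valid, and your treatment of the $\theta\in(-\pi/2,0)$ case via the backward reflection $(x_{-1},\alpha_{-1})$ and the reflection symmetry is exactly what the paper's pre-lemma discussion prescribes.
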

\begin{proof}
Denote $x_j = \pi_1 \beta^j(x_1,\alpha_1)$, where $\pi_1$ is the projection onto the first component. Then $x_j$ depends only on $(x_0, \xi)$, or equivalently $(x_0, \theta)$ and can be written as the composition
\begin{align*} 
x_j = \pi_1 \beta^j(x_1(x_0,\theta), \alpha_1(x_0,\theta)).
\end{align*}
As $\theta$ increases or decreases from $0$, we have
\begin{align}\label{chain rule}
\frac{\d x_j}{\d \theta} = \frac{\d x_j}{\d x_1} \frac{\d x_1}{\d \theta} + \frac{\d x_j}{\d \alpha} \frac{\d \alpha}{\d \theta}.
\end{align}
Recall from Lemma \ref{boundary monotonicity} that
\begin{align}\label{large derivative}
C_3 j \leq \frac{\d x_j}{\d \alpha} \leq C_4 j,
\end{align}
for $j$ large and positive constants $C_3$ and $C_4$. This derivative can be made arbitrarily large by choosing $j$ accordingly. Also, Lemma \ref{L3} showed that all orbits making approximately one rotation with $j$ reflections at the boundary have angles of reflection in the range $C_1/j \leq \alpha \leq C_2/j$ for positive constants $C_1$ and $C_2$. We can now use Lemma \ref{L1}, which showed that $|\d \alpha/\d \theta| \geq c > 0$ independently of $j$ for all $\theta$ producing an orbit of approximately one rotation in $j$ reflections. Using Lazutkin coordinates and Lemma \ref{boundary monotonicity}, we also have that
\begin{align}
\frac{\d x_j}{\d x_1} \frac{\d x_1}{\d \theta} = (1 + O(1/j)) \frac{\d x_1}{\d \theta}.
\end{align}
In graph coordinates (see proof of Lemma \ref{L1}), one can calculate that
\begin{align}
\begin{split}
\left|\frac{\d x_1}{\d \theta}\right| &= \left| \frac{\d x_1}{\d x} \frac{\d x}{\d \theta}\right| = \sqrt{1 + (f'(x))^2} \frac{x f'(x) + b - f}{x^2 + (b - f(x))^2}\\
& \leq 1 + O(b) \leq 2
\end{split}
\end{align}
which is bounded independently of $j$. Hence, the term \eqref{large derivative} dominates in the expression \eqref{chain rule} and $x_j$ winds monotonically around $\d \Omega$.
\end{proof}

\noindent At the endpoints $\theta = \pm \pi/2$, it is clear that the angle of reflection at $x_{\pm 1}$ is $\pi/2$ since the distance curves are parallel. Angles outside the range $\theta \in (-\pi/2, \pi/2)$ correspond to clockwise orbits and Lemma \ref{L3} showed that for an orbit making approximately one rotation and $j$ reflections at the boundary, all angles of reflection satisfy $C_1/j \leq \alpha_k \leq C_2/j$, $(1 \leq k \leq j)$. If $j$ is large and $x_0$ is $1/j^4$ close to the boundary as in the statement of Theorem \ref{8 orbit lemma}, it is clear that $j$ reflection orbits emanating from $\xi_0^+$ (corresponding to $\theta = 0$) will make at most a quarter rotation: in graph coordinates $\d \Omega = \text{graph} \{\kappa t^2 + R(t) \}$  (as in the proof of Lemma \ref{L1}), the orbit with initial covector $\xi_0^+$ intersects $\d \Omega$ at $(t_1, j^{-4}) \sim (\kappa^{-1/2} j^{-2}, j^{-4})$, where the tangent line has slope $m \sim 2 \kappa^{1/2} j^{-2}$. The angle at this first point of impact point on the boundary is then $\sim \arctan 2 \kappa^{1/2} j^{-2}  \sim 2 \kappa^{1/2} j^{-2}) = O(j^{-2})$. Iterating the billiard map $j$ times in Lazutkin coordinates then gives
\begin{align*}
	x_j = x_1 +\sum_{k =1}^{j-1} \left( \alpha_k + \alpha_k^3 f(x_k, \alpha_k) \right).
\end{align*}
Using Lemma \ref{angle bound 1} with $c_2 = 1$ and $j$ replaced with $j^2$ then gives
\begin{align*}
	|x_j - x_1| \leq (j-1) e j^{-2} + (j-1) e^3 j^{-6} \sup_{B^* \Omega} |f| = O(j^{-1}).
\end{align*}
In other words, the base point moves $O(1/j)$ along the boundary, which can certainly be made less than $|\d \Omega|/4$.
\\
\\
It is then clear that the collection of covectors at $x_0$ whose trajectories make approximately one rotation in $j$ reflections consists of two connected components in $S_{x_0}^*\Omega$:
\begin{def1}
The positive admissible cone $C_{x_0, +}^*(\Omega; j)$ at $x_0$ is defined to be the set of homogeneous extensions to $T_{x_0}^* \Omega$ of the two components in $S_{x_0}^*(\Omega)$ described above. The negative admissible cone $C_{x_0, -}^*(\Omega; j)$ at $x_0$ is defined by the same property for orbits making $j$ reflections in the clockwise direction. Their union is denoted by $C_{x_0}^*(\Omega;j) = C_{x_0, +}^*(\Omega;j) \cup C_{x_0, -}^*(\Omega;j)$.
\end{def1}
\noindent See Figure \ref{picture of counterclockwise perturbation} for an illustration of $C_{x_0}^*(\Omega;j)$.

\subsection{Intersection points}
We are finally ready to show that the intersection points of the last link with the distance curve of $y$ wind around monotonically as we twist $\xi$ in either direction. We first explain why the last link necessarily intersects $d^{-1}(\dist(y, \d \Omega) )$ twice.
\begin{lemm}\label{2int}
There exists $j_6 = j_6(\Omega) \in \N$ such that for $j \geq j_6$ and any $N \geq 4$, the distance curve $\{x \in \Omega: \text{dist}(x,\d \Omega) = j^{-N}\}$ is intersected exactly twice by any link emanating from the boundary at an angle greater than or equal to $C_1/j$, with $C_1$ the same constant appearing in Lemma \ref{L3}. 
\end{lemm}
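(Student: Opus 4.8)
The plan is to work in the graph coordinates introduced in the proof of Lemma~\ref{L1}: near a boundary point, write $\partial\Omega$ locally as the graph of a strictly convex function $f(t) = t^2 + R(t)$ with $R(t) = O(|t|^3)$ (after the affine and scaling normalizations), and parametrize the distance curve $\{d(z) = j^{-N}\}$ as a small $C^\infty$ perturbation of an inner parallel curve to this parabola. A link emanating from a boundary point $p\in\partial\Omega$ at angle $\alpha \ge C_1/j$ is a straight segment; the key geometric fact is that such a link enters the region $\{d(z) < j^{-N}\}$ (it leaves the boundary going inward) and, because $\Omega$ is strictly convex and the link has positive transversal slope relative to the boundary, it must eventually leave this thin collar region again before its next reflection. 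So the link crosses the level set $d^{-1}(j^{-N})$ at least twice. To get exactly twice, I would show the crossing is transversal and that the signed intersection count is two by a convexity/monotonicity argument: parametrize the link by arclength $s\mapsto \ell(s)$ and consider the scalar function $h(s) = d(\ell(s))$. It suffices to prove $h$ is strictly concave along the portion of the link inside the collar (equivalently, that $h' $ vanishes at exactly one interior point), since then $\{h = j^{-N}\}$ has at most two solutions, and we already know there are at least two.

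The main step is therefore to establish concavity of $s\mapsto d(\ell(s))$ along the link. Here I would use that $d$ is smooth in the collar $\widetilde U_j$ with $|\nabla d| \equiv 1$, so $\nabla^2 d$ annihilates $\nabla d$ and its only nontrivial eigenvalue along the level set direction is controlled by the curvature of $\partial\Omega$: precisely, in the collar one has $\nabla^2 d (v,v) = -\kappa_p(1 + O(d))\,|v_{\tan}|^2$ where $v_{\tan}$ is the component of $v$ tangent to the level curve and $\kappa_p > 0$ is the (bounded-below) boundary curvature at the nearest boundary point. Along a \emph{straight} link, $\ddot\ell = 0$, so $\frac{d^2}{ds^2} d(\ell(s)) = \nabla^2 d(\dot\ell,\dot\ell) = -\kappa(1+O(j^{-N}))\,|\dot\ell_{\tan}|^2 \le 0$, with equality only where the link is momentarily normal to the level curve. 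Since the link makes angle $\alpha \ge C_1/j$ with the boundary tangent at $p$, and the level curves are $C^1$-close to the boundary inside $\widetilde U_j$, the tangential component $|\dot\ell_{\tan}|$ is bounded away from zero except in a single small window, so $h'' < 0$ strictly on the relevant interval and $h' $ has a unique zero there. This gives exactly two crossings, and transversality at each crossing is immediate since $h'\neq 0$ there.

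The remaining bookkeeping is to pin down the scales. One must verify that for $\alpha \ge C_1/j$ (i.e. angle at least of order $j^{-1}$, while the collar has thickness $j^{-N}$ with $N\ge 4$, hence \emph{much} thinner than $\alpha$), the link genuinely re-emerges from the collar before returning to $\partial\Omega$; this follows because the re-intersection with $\partial\Omega$ occurs at arclength distance $\sim 2\alpha\rho$ from $p$ (Proposition 14.1 of \cite{Lazutkin5}, cf.\ \eqref{curvature bound}), which is $\gg j^{-N}$, so the link has long since exited the collar. One also checks uniformity in the base point: all estimates depend only on $\rho_{\min}, \rho_{\max}$ and the $C^3$ norm of the local graph, which are controlled uniformly over $\partial\Omega$ by compactness, so a single $j_6 = j_6(\Omega)$ works. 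I expect the concavity estimate for $d(\ell(s))$ — specifically getting a clean lower bound on $|\dot\ell_{\tan}|$ over the whole collar-portion of the link so that $h'$ has a \emph{unique} zero — to be the one place requiring a little care; everything else is either convexity of $\Omega$ or the elementary geometry of a line crossing a thin strictly convex collar.
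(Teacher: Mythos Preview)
Your argument is correct and takes a genuinely different route from the paper. The paper works in graph coordinates and computes the \emph{tangency angle}: it shows that any line through a boundary point which is tangent to the distance curve $d^{-1}(j^{-N})$ makes angle $\alpha_{\text{tangency}} = O(j^{-N/2})$ with $\partial\Omega$. Since $N\ge 4$ this is $O(j^{-2}) \ll C_1/j$, so a link at angle $\ge C_1/j$ is strictly steeper than any tangent line, hence transversal to the (convex) distance curve and therefore meets it in exactly two points. Your approach bypasses this explicit tangency computation entirely: you analyze $h(s)=d(\ell(s))$ along the link, use $\ddot\ell=0$ and the standard fact that $\nabla^2 d$ is negative semidefinite with kernel $\nabla d$ to get $h''\le 0$, and conclude $h$ is unimodal so that $\{h=j^{-N}\}$ has at most two solutions. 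The paper's calculation buys you a sharp quantitative bound on the critical angle (useful elsewhere in the section), while your Hessian argument is cleaner and coordinate-free, and makes the role of strict convexity more transparent.

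Two small points worth tightening. First, your phrasing ``the link enters the region $\{d(z)<j^{-N}\}$'' is inverted --- the link \emph{starts} there since $d=0$ on $\partial\Omega$; what you need (and later address) is that $\max_s h(s) > j^{-N}$. Second, your justification for this via the chord length $\sim 2\alpha\rho \gg j^{-N}$ is slightly indirect: chord length is not the same as $\max h$. The honest estimate is that at the midpoint of the link one has $h \sim \alpha^2\rho/2 \gtrsim C_1^2/j^2 \gg j^{-N}$ for $N\ge 4$, which is presumably what you intend. With those adjustments the proof goes through cleanly.
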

\begin{proof}
For each point $p$ in the distance curve $\{z \in \Omega : d(z, \d \Omega) = j^{-N} \}$, the tangent line $T_p \{z \in \Omega : d(z, \d \Omega) = j^{-N} \}$ intersects $\d \Omega$ exactly twice by convexity. Recall that Lemma \ref{L3} gave $C_1/j \leq \alpha_k \leq C_2/ j$ for each $1 \leq k \leq j$. We now show that the angles of reflection for links which are tangent to the distance curve $d^{-1}(j^{-N})$ are of order $O(j^{-N/2})$. In graph coordinates, $\d \Omega$ is locally given by $(x,f(x))$ where $f(x) = \kappa x^2 + R(x)$, $\kappa$ is the curvature of $\d \Omega$ at the point corresponding to $(0,0)$ and $R(x) = O(x^3)$. By rescaling, we may assume that $\kappa = 1$. The distance curve $d^{-1}(j^{-N})$ can be parametrized in graph coordinates by 
\begin{align*}
d^{-1}(j^{-N}) = \left\{ (t, f(t)) + r N(t): t \in \R  \right\},
\end{align*}
where $r = j^{-N}$ and
\begin{align*}
N(t) = \frac{(-f'(t), 1)}{\sqrt{1+ (f'(t))^2}}
\end{align*}
is the unit normal to the graph at $(t,f(t))$. There exist precisely two lines through the origin which are tangent to $d^{-1}(j^{-N})$ in graph coordinates. The positive parameter $t$ corresponding to a point of tangency satisfies
\begin{align}\label{slopes}
\frac{f(t) + r/(1 + (f'(t) )^2)^{1/2}}{t - rf'(t)/(1 + (f'(t) )^2 )^{1/2} } = \frac{\d_t (f(t) + r/(1 + (f'(t) )^2 )^{1/2} ) }{\d_t (t - rf'(t) /(1 + (f'(t) )^2)^{1/2})},
\end{align}
where the lefthand side of \eqref{slopes} is the slope of a line connecting the origin to a point on $d^{-1}(r)$ and the righthand side is the slope of the tangent to $d^{-1}(r)$. Solving for $t$ in terms of $r$ in equation \eqref{slopes} yields $f'(t) t - f(t) = t^2 + O(t^3) = O(r)$. Plugging this into the righthand side of \eqref{slopes}, we see that the angle of tangency satisfies $\tan (\alpha_{\text{tangency}}) = O(r^{1/2})$ and hence $\alpha_{\text{tangency}} = O(j^{-2})$ if $N \geq 4$. The proof is complete by noting that the angles $\alpha_k$ coming from orbits making approximately one rotation and $j$ reflections are bounded below by $C_1/j > O(j^{-4/2})$ for $j$ sufficiently large.


\end{proof}

\begin{lemm}\label{clockwise int points}
Under the hypotheses of Lemma \ref{2int}, denote by $w_1$ and $w_2$ the arclength coordinates on $d^{-1}(d(y))$ corresponding to the intersection points of $d^{-1}(d(y))$ and the last link of the $j$ reflection orbit emanating from $x$. There exist $C_8 > 0$ and $j_7 = j_7(\Omega) \in \N$ such that if $j \geq j_7$ and $x,y$ are $O(1/j^4)$ close the diagonal of the boundary, then $|\d w_i/ \d\theta| \geq C_8 j$ ($i = 1,2$) for all $\theta$ corresponding to $\xi(\theta)$ in the cone of admissible covectors at $x_0$.
\end{lemm}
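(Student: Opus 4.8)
The strategy is to show that each $w_i$ differs from the arclength coordinate of the $j$-th (final) reflection point $x_j=\pi_1\widehat\beta^{\,j}(x_1,\alpha_1)$ by a quantity whose derivative in $\theta$ is $O(1)$, and then to invoke Lemma \ref{winding in xi}, which supplies the dominant term $|\d x_j/\d\theta|\ge C_7 j$.

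Since $x,y$ are $O(1/j^4)$-close to the diagonal of the boundary, $r:=d(y)=\dist(y,\d\Omega)\le C_0/j^4$, so Lemma \ref{2int} applies to the curve $d^{-1}(r)$ (its proof only uses $r\le C_0/j^4$): the last link of the orbit from $(x_0,\xi(\theta))$ --- the chord joining $x_j\in\d\Omega$ to the $(j{+}1)$-st reflection point $x_{j+1}\in\d\Omega$ --- meets $d^{-1}(r)$ in exactly two points, one near $x_j$ with arclength coordinate $w_1$ and one near $x_{j+1}$ with arclength coordinate $w_2$. Pass to the graph coordinates centered at $x_j$, as in the proofs of Lemmas \ref{L1} and \ref{2int}; there the last link is the line through $x_j$ making the reflection angle $\alpha_j$ with $\d\Omega$, and $d^{-1}(r)$ is the shifted parabola $\{(t,f(t))+rN(t)\}$. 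Solving the intersection equation as in \eqref{slopes} produces the two roots, at lateral offsets $\delta_1=O(r/\alpha_j)$ and $\delta_2=\tan\alpha_j+O(r/\alpha_j)$ from $x_j$; composing with the normal projection onto $\d\Omega$ (a diffeomorphism with derivative $1+O(r)$) gives
\begin{align}\label{wi offset}
w_i=x_j+\delta_i,\qquad |\delta_i|=O(1/j)\quad(i=1,2).
\end{align}

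It remains to bound $\d\delta_i/\d\theta$. By Lemma \ref{L3}, every admissible orbit satisfies $\alpha_j\ge C_1/j$; by Lemma \ref{boundary monotonicity} together with the estimates in the proofs of Lemmas \ref{L1} and \ref{winding in xi} (which also give $\d\alpha_1/\d\theta=O(1)$ and $\d x_1/\d\theta=O(1)$), one obtains $\d\alpha_j/\d\theta=O(1)$ and $\d x_j/\d\theta=O(j)$. Combining these with $r\le C_0/j^4$ and differentiating the formulas for $\delta_i$,
\begin{align*}
\left|\frac{\d\delta_1}{\d\theta}\right| &= O\!\left(\tfrac{r}{\alpha_j^2}\right)\left|\frac{\d\alpha_j}{\d\theta}\right|+O(r)\left|\frac{\d x_j}{\d\theta}\right| = O(j^{-2}),\\
\left|\frac{\d\delta_2}{\d\theta}\right| &= \sec^2\alpha_j\left|\frac{\d\alpha_j}{\d\theta}\right|+O(j^{-2}) = O(1).
\end{align*}
Therefore, from \eqref{wi offset} and Lemma \ref{winding in xi} (and, for $\theta$ in the clockwise component of the admissible cone, its mirror version),
\begin{align*}
\left|\frac{\d w_i}{\d\theta}\right|\ \ge\ \left|\frac{\d x_j}{\d\theta}\right|-\left|\frac{\d\delta_i}{\d\theta}\right|\ \ge\ C_7 j-O(1)\ \ge\ \tfrac{C_7}{2}\,j
\end{align*}
once $j$ exceeds a threshold $j_7=j_7(\Omega)$, so the claim holds with $C_8=C_7/2$.

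The one genuinely technical point is the $C^1$ control encapsulated in \eqref{wi offset}: one must verify, uniformly over $\theta$ in the admissible cone and over $x_0\in\widetilde U_j$, that the lateral offsets $\delta_i$ and --- crucially --- their $\theta$-derivatives are of the stated size. This is the same graph-coordinate Taylor computation as in the proof of Lemma \ref{2int}, now carried out with explicit attention to the dependence on the initial angle; everything else is bookkeeping with the derivative bounds already recorded in Lemmas \ref{L1}, \ref{L3}, \ref{boundary monotonicity} and \ref{winding in xi}.
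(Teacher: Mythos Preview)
Your sketch follows the same strategy as the paper: reduce to Lemma~\ref{winding in xi} by showing that the intersection points on the level curve track the final boundary reflection $x_j$ up to an offset whose $\theta$-derivative is $O(1)$. The paper carries this out by fixing a graph frame at a particular $\theta_0$ (with $x_j(\theta_0)$ at the origin), writing down the implicit equation \eqref{implicit eqn} for the intersection parameters $z_i(\theta)$, differentiating it, and solving explicitly for $z_i'$ in terms of $t_j',\alpha_j',\beta_j'$ to obtain $z_1'=t_j'+O(j^{-2})$, $z_2'=t_j'+O(j^{-1})$. Your moving-frame/offset formulation is a cleaner way to say the same thing, and the final inequality $|w_i'|\ge|x_j'|-|\delta_i'|$ is exactly the paper's conclusion in disguise.

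Two places where your bookkeeping is a bit loose compared to the paper. First, the upper bound $\partial\alpha_j/\partial\theta=O(1)$ (and the underlying $\partial\alpha_1/\partial\theta=O(1)$) is not stated in Lemmas~\ref{L1} or~\ref{winding in xi}; those give only the lower bound. It does follow from the explicit formulas there, but you should say so. Second, your expression $|\partial\delta_2/\partial\theta|=\sec^2\alpha_j\,|\partial\alpha_j/\partial\theta|+O(j^{-2})$ drops the contribution from the curvature at $x_j$ changing as $x_j$ moves: since $\delta_2\approx \alpha_j/\kappa(x_j)$ in unrescaled coordinates, there is a term of size $(\alpha_j/\kappa^2)\,\kappa'\,x_j'=O(j^{-1})\cdot O(j)=O(1)$. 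This is exactly what the paper captures through $\beta_j'=2t_j'+O(j^{-1})$ in the implicit-differentiation step. It does not spoil your $O(1)$ bound on $\delta_2'$, but it is not $O(j^{-2})$.
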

\begin{proof}
In graph coordinates, $\d \Omega$ is again locally parametrized by $(t,f(t))$, where $f(t) = \kappa t^2 + R(t)$, $\kappa$ is the curvature of $\d \Omega$ at the point corresponding to $(0,0)$ and $R(t) = O(t^3)$. By rescaling, we may assume that $\kappa = 1$. The distance curve $d^{-1}(d(y))$ on which $y$ lies can be locally parametrized in graph coordinates by 
\begin{align}\label{d parametrization}
d^{-1}(d(y)) = \left\{ (t, f(t)) + r N(t)  \right\},
\end{align}
where $r = \text{dist}(y,\d \Omega) = O(j^{-4})$ and
\begin{align*}
N(t) = \frac{(-f'(t), 1)}{\sqrt{1+ (f'(t))^2}}
\end{align*}
is the unit normal to the graph at $(t,f(t))$. The final link of the orbit connecting $x_j(\theta) \in \d \Omega$ to $x_{j+1}(\theta) \in \d \Omega$ can be parametrized by the line
\begin{align}\label{last link line}
Y = \tan (\alpha_j(\theta) + \beta_j(\theta)) X + f(t_j(\theta)) - \tan(\alpha_j(\theta) + \beta_j(\theta)) t_j(\theta),
\end{align}
where
\begin{align*}
\beta_j = \arccos \left( \frac{1}{\sqrt{1 + (f'(x_j))^2}}  \right)
\end{align*}
is the angle of the tangent to the graph with the horizontal axis and $t_j = t_j(\theta) \in \R$ is defined implicitly by the equation $(t_j,f(t_j)) = x_j \in \d \Omega$. Noting that the Cartesian coordinates of the parametrization of $d^{-1}(d(y))$ satisfy
\begin{align}
\begin{split}
X &= t - \frac{rf'(t)}{\sqrt{1+ (f'(t))^2}},\\
Y &= f(t) + \frac{r}{\sqrt{1+ (f'(t))^2}},
\end{split}
\end{align}
we plug these into equation \eqref{last link line} to obtain
\begin{align}\label{implicit eqn}
\begin{split}
\left(f(t) + \frac{r}{\sqrt{1+ (f'(t))^2}}\right) &= \tan (\alpha_j(\theta) + \beta_j(\theta)) \left(t - \frac{rf'(t)}{\sqrt{1+ (f'(t))^2}} \right)\\
&+ f(t_j(\theta)) - \tan(\alpha_j(\theta) + \beta_j(\theta)) t_j(\theta).
\end{split}
\end{align}
\noindent By Lemma \ref{2int}, there exist precisely two solutions $t = z_1(\theta), z_2(\theta)$ of equation \eqref{implicit eqn} which correspond to the two intersection points of the last link with the distance curve $d^{-1}(d(y))$. Plugging $z_{i}(\theta)$ into equation \eqref{implicit eqn}, differentiating in $\theta$ and evaluating at $\theta_0$ corresponding to $x_j(\xi(\theta_0))$ at the origin, we obtain
\begin{align}\label{zprime}
\begin{split}
f'(z_i)& z_i' - \frac{r f'(z_i) f''(z_i) z_i'}{(1 + f(z_i)^2)^{3/2}} =   (\alpha_j' + \beta_j') (1 + \tan^2 \alpha_j) \left(z_i - \frac{rf'(z_i)}{\sqrt{1+ (f'(z_i))^2}} \right)\\
&+  \tan \alpha_j \left(z_i' - \frac{rf''(z_i) z_i'}{\sqrt{1+ (f'(z_i))^2}}  + \frac{r(f'(z_i))^2 f''(z_i) z_i'}{(1 + (f'(z_i))^2 )^{3/2}} \right) + 0 - \tan \alpha_j t_j'.
\end{split}
\end{align}
We now collect all terms with $z_i'$ in \eqref{zprime}: 
\begin{align}\label{divide through}
\begin{split}
A_i z_i' &=   (\alpha_j' + \beta_j') (1 + \tan^2\alpha_j ) \left(z_i - \frac{rf'(z_i)}{\sqrt{1+ (f'(z_i))^2}} \right) - \tan \alpha_j t_j',
\end{split}
\end{align}
where
\begin{align*}
A_i = f'(z_i)  - \frac{r f'(z_i) f''(z_i)}{(1 + f(z_i)^2)^{3/2}} -  \tan \alpha_j \left(1 - \frac{rf''(z_i)}{\sqrt{1+ (f'(z_i))^2}}  + \frac{r(f'(z_i))^2 f''(z_i) }{(1 + (f'(z_i))^2 )^{3/2}}\right).
\end{align*}
Before dividing through equation \eqref{divide through} by $A_i$, we Taylor expand $A_i$ in powers of $1/j$ to show that it is nonvanishing:
\begin{align}
A_i = 2 z_i - \tan \alpha_j + O(j^{-2}).
\end{align}
At this point, it is important to distinguish between $z_1$ and $z_2$. If $z_1 < z_2$, then insisting that $y$ be $O(j^{-4})$ close to the boundary amounts to setting $z_1 = O(j^{-4})$ and $z_2 = t_{j+1} + O(j^{-4})$. Here, $t_{j+1} = t_{j+1}(\theta)$ is defined implicitly by the equation $(t_{j+1}, f(t_{j+1})) = x_{j+1} \in \d \Omega$. When $i = 1$, $A_1 = - \alpha_j + O(j^{-4}) \leq -C_1 / (2j)$ is nonvanishing for $j$ sufficiently large. For $i = 2$, note that $\tan \alpha_j = f(z_2)/z_2 + O(j^{-4})$, which implies that $A_2 = z_2 + O(j^{-2})$ is again nonvanishing. Hence, we may divide through equation \eqref{divide through} by $A_i$. To estimate the righthand side of \eqref{divide through}, we calculate that
\begin{align}
\beta_j' = \frac{f'(t_j)}{\sqrt{1 - \frac{1}{1 + (f'(t_j))^2 }}} \frac{ f''(t_j) t_j '}{(1 + (f(t_j)^2))^{3/2}}.
\end{align}
The first factor above is well defined by continuity at $t_j = 0$ and equals $1$. Hence, $\beta_j' = 2 t_j' + O(1/j)$. If $i = 1$, equation \eqref{divide through} becomes
\begin{align}
(- \tan \alpha_j  +O(j^{-4})) z_1' = - \tan \alpha_j t_j' + O(j^{-3}),
\end{align}
which implies that $z_1' = t_j' + O(j^{-2})$. When $i = 2$, equation \eqref{divide through} becomes
\begin{align}
(z_2 + O(j^{-2}))z_2' = t_j' z_2 + O (1/j),
\end{align}
which again implies that $z_2' = t_j' + O(1/j)$. Recall that Lemma \ref{winding in xi} showed $x_j'$ is of order $j$ in Lazutkin coordinates when $\xi(\theta) \in C_{x_0}^*(\Omega;j)$. To compare Lazutkin coordinates with $t_j$, note that in graph coordinates, the arclength parameter $ds$ on $\d \Omega$ is given by
$$
ds = (1 + (f'(t))^2)^{1/2} dt.
$$
As arclength coordinates and Lazutkin coordinates are comparable independently of $j$, we conclude that $t_j'$ and hence $z_i'$ are also of order $j$. We conclude the proof by noting that the arclength parameter $ds'$ on $d^{-1}(d(y))$ in the parametrization \eqref{d parametrization} is given by
$$
ds' = \left((1 + (f'(t))^2)^{1/2} + O(r)\right) dt,
$$
which is also comparable to $dt$ independently of $j$.
\end{proof}
From Lemma \ref{clockwise int points}, we conclude that perturbing $\xi_0^+$ either clockwise or counterclockwise within the cone of admissible covectors (but still such that the corresponding orbit is counterclockwise) results in monotone increasing arclength coordinates for the intersection points with $d^{-1}(d(y))$. By the intermediate value theorem, such intersection points will then coincide with $y$ exactly $4$ times for orbits making approximately one rotation (two intersection points for the clockwise perturbation and two intersections points for the counterclockwise perturbation).



\subsection{Clockwise orbits}
From Lemma \ref{clockwise int points}, we saw that there were precisely $4$ counterclockwise orbits connecting $x = x_0$ to $y$ in $j$ reflections and approximately one rotation. The only constraint on $x$ and $y$ was that they were confined to an $O(j^{-4})$ neighborhood of the diagonal of the boundary. By reflecting $\Omega$ through the verticle axis, one obtains another smooth strictly convex domain and the reflections of $x$ and $y$ remain $O(j^{-4})$ close to the diagonal of the boundary. Hence, the same procedure produces exactly $4$ counterclockwise orbits of $j$ reflections from $x$ to $y$ making approximately one rotation in the reflected domain. Reflecting back through the vertical axis carries these $4$ orbits to clockwise orbits in the original domain. This completes the proof of Theorem \ref{8 orbit lemma}.

\begin{rema}\label{Uj}
We can take $j_0 = j_0(\Omega) = \max\{j_i: 2 \leq i \leq 7\}$, with $j_i$ as they appear in Lemmas \ref{L3}, \ref{L1}, \ref{boundary monotonicity}, \ref{winding in xi}, \ref{2int}, and \ref{clockwise int points}. Similarly, we can choose a uniform constant $C_0$ as in the statement of Lemma \ref{8 orbit lemma}. The tubular neighborhood $U_j$ referenced in Theorem \ref{HRP} can be taken to be $\{(x,y) \in \Omega \times \Omega: \text{dist}((x,y), \Delta \d \Omega) \leq j^{-4}\}$ for $j \geq j_0$, where $\Delta: \d \Omega \times \d \Omega$ is the diagonal embedding $x \mapsto (x,x)$. For $(x,y) \in U_j$, both $x$ and $y$ satisfy the conditions of Lemma \ref{clockwise int points}. The cone bundle $C_{x}^*(\Omega;j)$ is well defined whenever $\dist(x,\d \Omega) = O(j^{-4})$.
\end{rema}


\begin{rema}
The proof of Theorem \ref{8 orbit lemma} in this section could actually be extended to a larger region of validity. In particular, the same methods allow us to prove the existence of $8$ orbits of rotation number $k/j$ for $j$ sufficiently large in terms of $k$, connecting points in a comparable open neighborhood of the diagonal of the boundary. Additionally, we could allow $x$ and $y$ to be further away from the diagonal as long as they are both sufficiently close to the boundary. However, we are only concerned with near diagonal terms in this paper for the purposes of deriving trace formulas.
\end{rema}

\begin{rema}\label{boundary points}
If $x \in \d \Omega$, then $\xi_0^\pm$ are tangent to the boundary and perturbing $\xi$ away from $\xi_0^+$ in the clockwise direction is no longer well defined. Rather, it is equivalent to reflecting and then rotating $\xi$ in the counterclockwise direction. Similarly, $\xi_0^-$ cannot be rotated in the counterclockwise direction. Each of these restrictions reduces the number of $j$ reflection orbits to $y$ in appoximately one rotation by $2$. If additionally $y \in \d \Omega$, the final link only makes one intersection with $d^{-1}(0) = \d \Omega$ so there are only two orbits of $j$ reflections connecting $x$ to $y$ in approximately one rotation. One is in the counterclockwise direction while the other is in the clockwise direction. In particular, when $x = y$, there is a \textit{unique} geodesic loop (up to parametrization) of $j$ reflections and exactly one rotation (i.e. rotation number $1/j$). The existence of such loops is well known and the proof for boundary points is much simpler, as it only requires Lemma \ref{boundary monotonicity}
\end{rema}

\section{A parametrix for the wave propagator}\label{A parametrix for the wave propagator}
\noindent In this section, we use microlocal analysis to obtain an oscillatory integral parametrix for the wave propagator in an open subset of $\R \times \Omega \times \Omega$ which contains the diagonal $\R \times \Delta(\d \Omega)$. The wave kernel $e^{it\sqrt{- \Delta}}$ is actually \textit{not} a Fourier integral operator (FIO) near the tangential rays (see \cite{AndersonMelrose}, \cite{MelroseTaylor}), so we microlocalize the wave kernels near periodic transversal reflecting rays. We begin by reviewing FIOs and Chazarain's parametrix for the wave propagator in Sections \ref{Fourier Integral Operators} and \ref{Chazarain's Parametrix}. In Section \ref{Oscillatory integral representation}, we then cook up oscillatory integrals for each term in Chazarain's parametrix, which microlocally approximate the wave propagator near the orbits described in Theorem \ref{8 orbit lemma}. This approach is inspired by the formulas in \cite{MaMe82} and will rely on the symbol calculus in Section \ref{Chazarain's Parametrix}.

\subsection{Fourier Integral Operators}\label{Fourier Integral Operators}
Let $X$ and $Y$ be open sets in $\R^{n_X}$ and $\R^{n_Y}$ respectively. If $a \in S^\mu_{1, 0}(X \times \R^N)$ is a classical symbol of order $\mu$ and $\Theta \in C^\infty(X  \times \R^N)$ is a nondegenerate phase function, then the linear operator
$$
A(u) = \int_X \int_{\R^N} e^{i \Theta(x,\theta)} a(x,\theta) u(x) \,d\theta dx
$$
is called a Lagrangian or Fourier integral distribution on $X$. Recall that a continuous linear operator $A : C_0^\infty(Y) \to \mathcal{D}'(X)$ has an associated Schwartz kernel $K_A \in \mathcal{D}'(X \times Y)$. If $K_A$ is given by a locally finite sum of Lagrangian distributions on $X \times Y$, then we say $A$ is a Fourier integral operator (FIO). One can then show that the wavefront set of the kernel is contained in the image of the map $\iota_{\Theta}: (x, y) \mapsto (x, y, d_x \Theta, d_y\Theta)$ when restricted to the critical set $C_\Theta := \{d_\theta \Theta = 0\}$. The image of $\iota_\Theta$ is in fact a conic Lagrangian submanifold $\Lambda_\Theta \subset T^* (X \times Y)$ and the map $\iota_{\Theta}$ is a local diffeomorphism from $C_\Theta$ onto $\Lambda_\Theta$. In this case, we say ``$\Theta$ parametrizes $\Lambda_\Theta$.'' The canonical relation or wavefront relation of $A$ is defined by
$$
WF'(A) = \{(x,\xi), (y, \eta) : (x,y,\xi, -\eta) \in WF(K_A)\} \subset T^*X \times T^*Y,
$$
and describes how the operator $A$ propagates singularities of distributions on which it acts. If $\omega_X$ and $\omega_Y$ denote the natural symplectic forms on $T^*X$ and $T^*Y$ respectively, one can more invariantly consider FIOs associated to general conic Lagranigan submanifolds $\Lambda \subset T^*X \times T^*Y$ (canonical relations), with respect to the symplectic form $\omega_X - \omega_Y$. The notion of a principal symbol for Fourier integral operators is more subtle than that for pseudodifferential operators. The principal symbol of $A$ is a half density on $\Lambda_\Theta$ given in terms of the parametrization $\iota_\Theta$:
\begin{align}\label{principal symbol of an FIO}
e = {\iota_\Theta}_* (a_0 |dC_\Theta|^{1/2}),
\end{align}
where $a_0$ is the leading order term in the asymptotic expansion for $a$ and $|dC_\Theta|^{1/2}$ is the half density associated with the Gelfand-Leray form on the level set $\{d_\theta \Theta = 0\}$. Here, we have ignored Maslov factors coming from the Keller-Maslov line bundle over $\Lambda_\Theta$. These are nonzero factors $e^{i\sigma \pi/4}$ ($\sigma$ is known as the Maslov index) which appear in front of the principal symbol as a result of the multiplicity of phase functions parametrizing the canonical relation $\Lambda_\Theta$, possibly in different coordinate systems. While these factors allow the principal symbol to be defined in a more geometrically invariant way, we defer computation of the Maslov indices until Section \ref{Maslov factors}. For a more thorough reference on the global theory of Lagrangian distributions, see \cite{Du96}, \cite{Ho71}, \cite{DuHo72}, and \cite{Ho485}. The order of a Fourier integral operator is defined in such a way that when two Fourier integral operators' canonical relations intersect transversally, then the composition is again a Fourier integral operator and order of the composition is the sum of the orders:
\begin{align}\label{order of a lagrangian distribution}
\text{order}(A) = m = \mu + \frac{1}{2}N - \frac{1}{4}(n_X + n_Y).
\end{align}
Recall that here, $n_X$ and $n_Y$ are the dimensions of $X$ and $Y$ respectively. In this case, we write $A \in I^m(X \times Y, \Lambda)$. This convention on orders also generalizes that of pseudodifferential operators, where $X = Y$ and $m = \mu$ coincides with the order of the corresponding symbol class. Sufficient conditions which guarantee that the composition exists are clean or transversal intersection of the two operators' canonical relations. In general, composition of Fourier integral operators and the associated symbol calculus is somewhat complicated, but we will not directly use the composition formula in what follows.

\subsection{Chazarain's parametrix}\label{Chazarain's Parametrix}
The parametrix developed by Chazarain in \cite{Ch76} provides a microlocal description of the wave kernels near periodic transversal reflecting rays. The parametrices for
$$
E(t) = \cos t \sqrt{- \Delta}, \qquad \qquad S(t) = \frac{\sin t \sqrt{- \Delta}}{\sqrt{-\Delta}}
$$
are constructed in the ambient Euclidean space $\R \times \R^n \times \R^n$. We only consider $S(t)$, as the formula for $E(t)$ is easily obtained from that of $S(t)$ by differentiating in $t$. We write $E(t,x,y)$ and $S(t,x,y)$ for the Schwartz kernels of $E(t)$ and $S(t)$ respectively. Following the work in \cite{Ch76}, we can find a Lagrangian distribution
\begin{align}\label{Chazarain sum}
\widetilde{S}(t,x,y) = \sum_{j = - \infty}^{\infty} S_j(t,x,y), \qquad S_j \in I^{-5/4}(\R \times \R^n \times \R^n, \Gamma_\pm^j),
\end{align}
which approximates $S(t)$ microlocally away from the tangential rays modulo a smooth kernel. We will describe the canonical relations $\Gamma_\pm^j$ momentarily and in particular, show that the sum in \eqref{Chazarain sum} is locally finite. We first explain what is meant by approximating $S(t)$ ``microlocally away from the tangential rays.'' In general, two distributions $f,g \in \mathcal{D}'(\R^n)$ are said to agree microlocally near a closed cone $\Lambda_1 \subset T^*\R^n$ if $WF(u - v) \cap \Lambda_1 = \emptyset$. Similarly, using the language from Section \ref{Fourier Integral Operators}, two operators $A, B : C^\infty(Y) \to C^\infty(X)$ are said to agree microlocally near a closed cone $\Lambda_2 \subset T^*X \times T^*Y$ if $WF'(A-B) \cap \Lambda_2 = \emptyset$. This second notion is what we will use to say that our parametrix approximates $S(t)$ microlocally near the canonical relations $\Gamma_{\pm}^j$.
\\
\\
To describe the canonical relations precisely, we first introduce some notation following the presentations in \cite{Ch76} and \cite{HeZe12}. As the Euclidean wave operator $\Box_{\R^2}$ factors into $(\d_t - \sqrt{\Delta})(\d_t + \sqrt{-\Delta})$, there are two Hamiltonians corresponding to the symbol $\pm |\eta|$ of $\pm \sqrt{-\Delta}$. Let $H_\pm(y,\eta) = \pm |\eta|$ and $g^{\pm t}$ be the Hamiltonian flow, i.e. the flow map associated to the system
\begin{align*}
\begin{cases}
\d_t y = \frac{\d H_\pm}{\d \eta},\\
\d_t \eta = -\frac{\d H_\pm}{\d y},
\end{cases}
\end{align*}
which is in fact just the reparametrized geodesic flow on $\R^2$. For $(y,\eta) \in T^*\Omega$ (or $T_{\d \Omega}^*\R^2$ such that $\eta$ is transversal to the boundary and inward pointing), recall that in Section \ref{Billiards} we defined
\begin{align*}
\begin{split}
t_\pm^1(y,\eta) &= \inf_{t > 0} \{t: \pi_1 g^{\pm t}(y, \eta) \in \d \Omega \},\\
t_\pm^{-1}(y,\eta) &= \sup_{t < 0} \{t: \pi_1 g^{\pm t}(y,\eta) \in \d \Omega \},
\end{split}
\end{align*}
where $\pi_1$ is projection onto the spatial variable. We have $t_\mp^{1}(y,\eta) = -t_\pm^{-1}(y,\eta)$. We then set
\begin{align*}
\begin{split}
\lambda_\pm^{1}(y, \eta) &= g^{\pm t_\pm^1}(y,\eta),\\
\lambda_\pm^{-1}(y, \eta) &= g^{\pm t_\pm^{-1}}(y,\eta).
\end{split}
\end{align*}
Also define $\widehat{\lambda_\pm^{1}(y,\eta)}$ to be the reflection of $\lambda_\pm^{1}(y,\eta)$ through the cotangent line at the boundary. In other words, $\widehat{\lambda_\pm^{1}(y,\eta)}$ and $\lambda_\pm^{1}(y,\eta)$ have the same cotangential components but opposite conormal components so that $\widehat{\lambda_\pm^{1}(y,\eta)}$ is inward pointing. The point $\widehat{\lambda_\pm^{-1}(y,\eta)}$ is defined analogously. We can then inductively define $t_\pm^j(y,\eta)$ and $t_\pm^{-j}(y,\eta)$ by the formulas
\begin{align*}
\begin{split}
t_\pm^j &= \inf_{t > 0} \{t : \pi_1 g^{\pm t}(\widehat{\lambda_\pm^{j-1}(y,\eta)}) \in \d \Omega \},\\
t_\pm^{-j} &= \sup_{t < 0} \{t : \pi_1 g^{\pm t}(\widehat{\lambda_\pm^{-(j-1)}(y,\eta)}) \in \d \Omega \}.
\end{split}
\end{align*}
The total travel time after $j$ reflections is defined by
\begin{align*}
T_\pm^j(y,\eta) = \begin{cases}
\sum_{k = 1}^j t_\pm^k(y,\eta) & j > 0\\
0 & j = 0\\
\sum_{k = j}^{-1} t_\pm^k(y,\eta) & j < 0.
\end{cases}
\end{align*}
To study how the fundamental solution of $\Box_{\Omega}$ behaves at $\d \Omega$ when we impose boundary condtions, we propagate the intial data by the free wave propagator on $\R^2$, restrict it to the boundary, reflect, and then propagate again.  If such a construction is continued for $j \in \Z$ reflections at the boundary, it is shown in \cite{Ch76} that the FIOs $S_j$ must have canonical relations
$$
\Gamma_\pm^j = \begin{cases}
(t, \tau, g^{\pm t}(y,\eta), y, \eta): \tau = \pm |\eta| & j =0,\\
(t, \tau, g^{\pm (t - T_\pm^j(y,\eta))}(\widehat{\lambda_\pm^j(y,\eta)}), y, \eta)): \tau = \pm |\eta| & j \in \Z \backslash \{0\}.
\end{cases}
$$
\noindent Since $\widetilde{S}(t)$ is a microlocal parametrix, the canonical relation of the true solution operator $S(t)$ is also
$$
\Gamma = \bigcup_{j \in \Z, \pm} \Gamma_{\pm}^j.
$$
\noindent Here, $j > 0$ and $j < 0$  correspond to reflections on the inside and outside of the boundary. For $t > 0, \pm \tau > 0$, the canonical relations corresponding to $j >0$ project onto the interior of $\Omega$ when $t$ is small (inside bounces). If $t > 0, \pm \tau > 0$ and $j < 0$, the canonical relations project onto the exterior of $\Omega$, corresponding to outside bounces. In fact, there are four modes of propagation associated to the canonical relations $\Gamma_\pm^j$, corresponding to $\pm \tau \geq 0$ (forwards and backwards time) and $\pm j \geq 0$ (inside and outside bounces).  See \cite{HeZe12} for an explicit local model in the case of one reflection, where $\Omega$ is replaced by a half plane. We see that a point $(t, \tau, x, \xi, y, \eta)$ belongs to the canonical relation $\Gamma_{\pm}^j \subset T^*(\R \times \R^2 \times \R^2)$ if the broken geodesic of $j$ reflections emanating from $(y,\eta)$ passes through $(x,\xi)$ in time $t$. For a more thorough discussion of Chazarain's parametrix, see \cite{Ch76} and \cite{GuMe79b}.
\\
\\
\noindent To be precise, Chazarain actually showed that there exists FIOs $S_j$ such that the sum in \eqref{Chazarain sum} is a parametrix for the wave propagator $S(t)$ with canonical relation
$$
\Gamma = \bigcup_{j \in \Z, \pm} \Gamma_{\pm}^j.
$$
However, the canonical relations were never parametrized by explicit phase functions and the principal symbols of the operators $S_j$ were not computed in \cite{Ch76}. For the remainder of this section, we concern ourselves with the task of explicitly computing them in terms of geometric and dynamical data associted to the billiard map.

\subsection{Oscillatory integral representation}\label{Oscillatory integral representation}
In this section, we cook up an oscillatory integral such that microlocally near the canonical relations $\Gamma_\pm^j$,
$$
S_j(t,x,y) = \int_{-\infty}^\infty e^{i\Theta_j(t,\tau, x,y)} {a_j}(\tau, x,y)\,d\tau,
$$
where $S_j$ is the $j^{th}$ term in Chazarain's parametrix corresponding to a wave with $j$ reflections and $a_j \in S_{\text{cl}}^\mu$ is a classical symbol of order $\mu$. We will only compute the principal symbol and use $L.O.T$ to denote lower order terms in the sense of Lagrangian distributions in what follows. Due to the presence of different Maslov factors on each branch of $\Gamma_{\pm}^j$ corresponding to $\pm \tau > 0$ (cf. Sections \ref{Fourier Integral Operators} and \ref{Maslov factors}), it is actually more convenient to find operators
\begin{align}\label{osc int}
S_{j, \pm}(t,x,y) = \int_{0}^\infty e^{i\Theta_{j,\pm}(t,\tau, x,y)} { a_{j,\pm}}(\tau, x,y)\,d\tau,
\end{align}
so that $S_j = S_{j,+} + S_{j,-}$ and the phase functions associated to $S_{j,\pm}$ paramaterize $\Gamma_{+}^j$ and $\Gamma_-^j$ individually.
\\
\\
We first make precise the notion of microlocalized FIOs. We would like to microlocalize $S(t)$ near periodic orbits of rotation number $1/j$. Oftentimes, it is required that such orbits be nondegenerate, in the sense that $1$ is not an eigenvalue of the linearized Poincar\'e map. However, this assumption is not needed for our trace formulas, which work both for simple nondegenerate orbits as well as degenerate orbits coming in one parameter families as in the case of an ellipse (cf. Theorem \ref{Rational caustic theorem} and Corollary \ref{Ellipse corollary}). We recall that $\Omega$ is said to satisfy the noncoincidence condition \eqref{NCC} if there exist no periodic orbits of rotation number $m/n$, $m \geq 2$ having length in a sufficiently small neighborhood of $|\d \Omega|$. In this case, for $j$ sufficiently large, periodic orbits of rotation number $1/j$ come in isolated families. This follows from the results in \cite{MaMe82}, which shows in particular that for $j$ sufficiently large, no two orbits of distinct rotation numbers $1/j$ and $1/k$ can have the same length. 
\\
\\
Recall also that for $j$ sufficiently large, there exists a unique geodesic loop of rotation number $1/j$ at each boundary point $q$, whose length we denote by $\Psi_j(q,q)$ (the $j$-loop function). It can be shown that periodic billiard orbits arise as critical points of the $j$-loop function (see \cite{Vig18}, \cite{HeZe19}). As in the statement of Theorem \ref{Main theorem}, denote $t_j = \inf_{q \in \d \Omega}  \Psi_j(q,q)$ and $T_j = \sup_{q \in \d \Omega} \Psi_j(q,q)$. If $\Omega$ satisfies the noncoincidence condition \ref{NCC}, we can find a smooth cutoff function $\chi_1(t)$ which is identically equal to $1$ on an open nieghborhood of $[t_j,T_j]$ and vanishes in a neighborhood of all other $L \in \text{Lsp}(\Omega)$. As noted in Section \ref{Chazarain's Parametrix}, each propagator $S_j$ has canonical relations $\Gamma_{\pm}^j$. Denote by $\chi_2$ a smooth cutoff function which is identically equal to $1$ on $\Gamma^j = \cup_\pm \Gamma_\pm^j$, vanishing near the gliding rays $T^* \d \Omega$ and conic in the fiber variables $\tau, \xi$ and $\eta$. Quantizing $\chi_2$ gives a pseudodifferential operator which microlocalizes near the support of $\chi_2$. For a reference, see Chapter 18 of \cite{Ho385}. We call such an operator a microlocal cutoff on $\Gamma_\pm^j$. The composition $\chi_1(t)\chi_2(t,x,y, D_t, D_x, D_y) S(t)$ is then smoothing away from the geodesic loops of rotation number $1/j$ and the trace of this composition is equal to the wave trace modulo $C^\infty$ in an open neighborhood of $[t_j,T_j]$. 
\\
\\
We now use Theorem \ref{8 orbit lemma} to find suitable phase functions $\Theta_{j,\pm}$ which parametrize $\Gamma_\pm^j$. Define phase functions $\Theta_j^k$ by the formula
$$
\Theta_{j,\pm}^k(t,\tau, x,y) = \pm \tau(t - \Psi_j^k(x,y)),
$$
where $\Psi_j^k$ are given in Definition \ref{psi jk}. We then have,

\begin{lemm}\label{parameterization of cannonical relations}
The phase functions $\Theta_{j,\pm}^k(t,\tau,x,y)$ are smooth in an open neighborhood of the diagonal of the boundary and locally parametrize the canonical relations $\Gamma_\pm^j$. In particular, the fibers of both $\Gamma_+^j$ and $\Gamma_-^j$ lying over this neighborhood are unions of $8$ connected components, which we denote by $\Gamma_\pm^{j,k}$ corresponding to $1 \leq k \leq 8$ as in Definition \eqref{psi jk}.
\end{lemm}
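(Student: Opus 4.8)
The plan is to verify the two defining properties of a parametrizing phase function for each $\Theta_{j,\pm}^k$: first, that each $\Theta_{j,\pm}^k$ is a nondegenerate phase function in the sense of Section \ref{Fourier Integral Operators} (so that the associated critical set $C_{\Theta_{j,\pm}^k}$ is a smooth manifold of the correct dimension and the map $\iota_{\Theta_{j,\pm}^k}$ is an immersion onto a Lagrangian), and second, that the image Lagrangian $\Lambda_{\Theta_{j,\pm}^k}$ coincides with the corresponding connected component of $\Gamma_\pm^j$ described in Section \ref{Chazarain's Parametrix}. Smoothness near the diagonal of the boundary is the easy part: by Theorem \ref{8 orbit lemma} and Remark \ref{Uj}, for $j \geq j_0$ and $(x,y) \in U_j$ there are exactly $8$ distinct billiard orbits of $j$ reflections and approximately one rotation, and the length functionals $\Psi_j^k(x,y)$ depend smoothly on $(x,y)$ because these orbits are transversal reflecting rays (no tangencies at the boundary, the first and last links meeting $\partial\Omega$ transversally by the angle bounds $C_1/j \le \alpha_k \le C_2/j$ of Lemma \ref{L3}), so the implicit function theorem applies at each reflection point and the total length varies smoothly. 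Hence $\Theta_{j,\pm}^k(t,\tau,x,y) = \pm\tau(t - \Psi_j^k(x,y))$ is smooth on a conic neighborhood of $\{t = \Psi_j^k(x,y)\} \times (\tau > 0)$ inside $\R_t \times \R_+ \times U_j$.

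Next I would check nondegeneracy. Here the single fiber variable is $\theta = \tau$, and $d_\tau \Theta_{j,\pm}^k = \pm(t - \Psi_j^k(x,y))$, so the critical set is $C = \{t = \Psi_j^k(x,y)\}$, a hypersurface, hence of dimension $2n + 1$ with $n = 2$ (matching $\dim \Lambda = 2n+1 = 5$ for a canonical relation in $T^*(\R\times\R^2\times\R^2)$ cut out by the energy constraint — note the homogeneity-degree-one phase). Nondegeneracy requires $d_{(t,x,y,\tau)}(\partial_\tau \Theta_{j,\pm}^k)$ to be nonvanishing on $C$, which is immediate since $\partial_t(\partial_\tau\Theta_{j,\pm}^k) = \pm 1 \neq 0$. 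So $\iota_{\Theta_{j,\pm}^k}$ is a local diffeomorphism from $C$ onto a conic Lagrangian $\Lambda_{\Theta_{j,\pm}^k} \subset T^*(\R \times \R^2 \times \R^2)$.

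The substantive step — and the main obstacle — is identifying $\Lambda_{\Theta_{j,\pm}^k}$ with the component $\Gamma_\pm^{j,k}$ of Chazarain's canonical relation. On $C$ one computes $d_t\Theta = \pm\tau$, $d_x\Theta = \mp\tau\, d_x\Psi_j^k$, $d_y\Theta = \mp\tau\, d_y\Psi_j^k$, so a point of $\Lambda_{\Theta_{j,\pm}^k}$ has the form $(t, \pm\tau,\, x, \mp\tau\, d_x\Psi_j^k,\, y, \mp\tau\, d_y\Psi_j^k)$ with $t = \Psi_j^k(x,y)$. To match this against $\Gamma_\pm^j = \{(t,\pm|\eta|,\, g^{\pm(t-T_\pm^j)}(\widehat{\lambda_\pm^j(y,\eta)}),\, y,\eta)\}$ I would invoke the first-variation formula for the billiard length functional: if $\gamma_j^k(x,y)$ is the broken geodesic realizing $\Psi_j^k$, then $-d_y\Psi_j^k$ is the unit (co)tangent to the initial link at $y$ and $d_x\Psi_j^k$ is the unit (co)tangent to the final link at $x$, the interior variations vanishing by Snell's law at the reflection points (this is the standard generating-function property of the billiard map, and is exactly how periodic orbits arise as critical points of the loop function, as noted before Lemma \ref{parameterization of cannonical relations}). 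Setting $\eta = -\tau\, d_y\Psi_j^k$ so $|\eta| = \tau$, the broken-geodesic-flow image after time $t - T_\pm^j(y,\eta) = \Psi_j^k - T_\pm^j$ lands precisely at $(x, \xi)$ with $\xi = -\tau\, d_x\Psi_j^k \cdot(\mp 1)$, giving the required point of $\Gamma_\pm^j$; the sign $\pm$ selects forward/backward propagation and, together with the clockwise/counterclockwise split and the $TT,TN,NT,NN$ classification of Definition \ref{TTNN}, labels the eight components $\Gamma_\pm^{j,k}$. The delicate bookkeeping is that $\Psi_j^k$ records the length from $x$ to $y$ through the \emph{interior} portion of the orbit (between the first impact $\widehat x$ and the $(j+1)$st impact $\widehat y$ there are $j$ links at the boundary), whereas $T_\pm^j$ is measured from the boundary; reconciling the two amounts to checking that $\Psi_j^k - T_\pm^j$ equals the residual time $t_\pm^1$ to the first impact plus the time from the last impact to $x$, which follows from additivity of arclength along the orbit. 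Since each $\Psi_j^k$ is defined on a connected neighborhood of $\Delta\partial\Omega$ and the eight orbits are distinct there (Theorem \ref{8 orbit lemma}), the eight Lagrangians $\Lambda_{\Theta_{j,\pm}^k}$ are disjoint, so their union is exactly the fiber of $\Gamma_\pm^j$ over $U_j$, which therefore has eight connected components as claimed.
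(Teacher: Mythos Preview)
Your proposal is correct and follows the same route as the paper, which does not give a self-contained proof but defers to \cite{Vig18} with the remark that the argument rests on the length functional being a generating function for the billiard map; the paper later records the key first-variation identities $d_x\Psi_j^k = (x-q_1^k)/|x-q_1^k|$ and $d_y\Psi_j^k = (y-q_j^k)/|y-q_j^k|$ in equation \eqref{x y gradients}, which is exactly the step you carry out. One small slip in your bookkeeping paragraph: $T_\pm^j(y,\eta)$ is measured from the interior point $y$, not from the boundary (see the definition of $t_\pm^1$ in Section \ref{Chazarain's Parametrix}), so $\Psi_j^k - T_\pm^j$ is simply the length of the final link from the $j$th reflection point to $x$, and the reconciliation you flag as delicate is in fact immediate.
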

\begin{proof}
	For any $x,y \in \Omega$ let
	\begin{align}\label{length functional}
	\begin{cases}
	L_{x,y}: \d \Omega^j \to \R_+\\
	L_{x,y}(q_1, q_2, \cdots q_j) = |x - q_1| + \left\{ \sum_{m = 2}^j |q_m - q_{m-1}| \right\} + |q_j - y|
	\end{cases}
	\end{align}
	denote the length functional. We first show that billiard trajectories from $x$ to $y$ are in one to one correspondence with critical points of \eqref{length functional} with respect to $q \in \d \Omega^j$. Let $g \in C^\infty(\R^2)$ be a defining function for $\d \Omega$ and consider $q$ as a variable in $\R^2 \times \cdots \times \R^2 = \R^{2j}$ rather than $\d \Omega^{j}$. If $q$ is a critical point of \eqref{length functional}, then as in the method of Lagrange multipliers, by setting $x = q_0$ and $y = q_{j+1}$, we find that for $1 \leq m \leq j$, there exists $\lambda_m \in \R$ such that
	$$
	\frac{\d L_{x,y}}{\d q_m} = \frac{q_m - q_{m-1}}{|q_m - q_{m-1}|} + \frac{q_m - q_{m+1}}{|q_m - q_{m+1}|} = \lambda_m \nabla_{q_m} g.
	$$
	Since $\nabla_{q_m} g \perp \d \Omega$, this implies that the two unit vectors in the formula for $\d_{q_m} L_{x,y}$ have opposite tangential components, which is precisely the condition giving elastic collision at the boundary (angle of incedince equals angle of reflection). Similarly, if this condition is satisfied, then $q$ is a critical point for \eqref{length functional}.\\
	\\
	We now consider the functions $\Psi_j^k$ in Definition \ref{psi jk}. We have
	\begin{align}\label{psi j}
	\Psi_j^k(x,y) = |x - q_1^k| + \left\{ \sum_{m = 2}^j |q_m^k - q_{m-1}^k| \right\} + |q_j^k - y|,
	\end{align}
	where $q_m^k(x,y)$ is the $m$th impact point on the boundary for the billiard trajectory corresponding to $\Psi_j^k$. As opposed to the $q_m$ in the length functional \eqref{length functional}, $q_m^k$ will in general have a nontrivial dependence on $x$ and $y$. Differentiating \eqref{psi j} in $x$, we obtain
	\begin{align}\label{psi j derivative}
	\frac{\d \Psi_j^k}{\d x_i} = \frac{x - q_1^k}{|x - q_1^k|} \cdot \frac{\d}{\d x_i} (x - q_1^k) + \left\{\sum_{m =2}^j \frac{q_m^k - q_{m-1}^k}{|q_m^k - q_{m-1}^k|} \cdot \frac{\d}{\d x_i}(q_m^k - q_{m-1}^k) \right\} + \frac{q_j^k - y}{|q_j^k - y|} \cdot \frac{\d}{\d x_i} (q_j^k - y).
	\end{align}
	Since for each $x, y \in \Omega$, the path defined by $(x,q^k,y)$ corresponds to a billiard trajectory, we see that all of the terms except the first telescope in \eqref{psi j derivative}. Hence,
	\begin{align}\label{psi j x derivative}
	d_x \Psi_j^k = \frac{x - q_1^k}{|x - q_1^k|}.
	\end{align}
	Similarly, differentiating \eqref{psi j} in $y$, we obtain
	\begin{align}\label{psi j y derivative}
	d_y \Psi_j^k = \frac{y - q_j^k}{|y - q_j^k|}.
	\end{align}
	Geometrically, these gradients are the incident and (reflected) outgoing unit directions of the billiard trajectories described in Theorem \ref{8 orbit lemma}.\\
	\\
	We now consider the maps
	\begin{align}\label{parametrize Gamma}
	\iota_{{\Theta_{j, \pm}^k}}: (t, \tau, x, y) \mapsto (t, \pm \tau, x, d_x \Theta_{j, \pm}^k,y,  - d_y\Theta_{j,\pm}^k) = (t, \pm \tau, x, \mp \tau d_x \Psi_j^k,y, \pm \tau d_y \Psi_j^k)
	\end{align}
	on the critical set $C_{\Theta_{j, \pm}^k} = \{ t - \Psi_j^k = 0 \}$. Inserting formulas \eqref{psi j x derivative} and \eqref{psi j y derivative} into \eqref{parametrize Gamma} and comparing with the canonical graphs
	$$
	\Gamma_\pm^j = \begin{cases}
	(t, \tau, g^{\pm t}(y,\eta), y, \eta): \tau = \pm |\eta| & j =0,\\
	(t, \tau, g^{\pm (t - T_\pm^j(y,\eta))}\widehat{\lambda_j(y,\eta)}, y, \eta): \tau = \pm |\eta| & j \in \Z \backslash \{0\}
	\end{cases}
	$$
	from Section \ref{Chazarain's Parametrix}, we see that $\iota_{{\Theta_{j, \pm}^k}}: C_{\Theta_{j, \pm}^k} \to \Gamma_\pm^j$ is a local diffeomorphism (this is why we chose orbits connecting $y$ to $x$ rather than $x$ to $y$ in Definition \ref{psi jk}).
\end{proof}
As our parametrices for the propagators described in \cite{Ch76} are in fact modified by microlocal cutoffs supported away from the tangential rays in $S^*\d \Omega$, we may assume $\Gamma_{\pm}^{j,k}$ are smooth nonintersecting Lagrangian submanifolds over the interior. It would be interesting to study mapping properties of the operators $S_j^k$ as the canonical relations degenerate near the glancing set in future work. We now want to derive an explicit oscillatory integral representation \eqref{osc int} for $S(t)$ in a specific coordinate system adapted to $\Omega$. We first need to better understand the forwards and backwards symbols on $\Gamma$.
\begin{prop}\label{symbol prop}
Let $e_\pm$ denote the principal symbol of ${S}(t)$ on $\Gamma = \bigcup_{j \in \Z, \pm} \Gamma_\pm^j$. Modulo Maslov factors, we then have
\begin{align*}
e_\pm &= \frac{(-1)^j}{2 \tau i} |dt \wedge dy \wedge d\eta|^{1/2} \qquad \text{(Dirichlet)}\\
e_\pm &= \frac{1}{2 \tau i} |dt \wedge dy \wedge d\eta|^{1/2} \qquad \text{(Neumann/Robin)}
\end{align*}
where $|dt \wedge dy \wedge d\eta|^{1/2}$ is the canonical half density.
\end{prop}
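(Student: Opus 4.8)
The plan is to determine the principal symbol of $S(t)$ on the Chazarain canonical relations by tracking the symbol through the iterative reflection construction, starting from the well-known free-space fundamental solution. The key observation is that each $S_j$ is built by composing the free half-wave propagator with a restriction to the boundary, a boundary reflection operator, and another free propagation; composition of Fourier integral operators multiplies principal symbols along the fiber product of the canonical relations, so the symbol of $S_j$ is obtained from the symbol of $S_0$ by applying $j$ copies of the reflection factor. For the free half-wave propagator on $\mathbb{R}^2$, one has $S(t) = \frac{\sin t\sqrt{-\Delta}}{\sqrt{-\Delta}}$, whose kernel is, up to smoothing, $\frac{1}{2}\int_0^\infty \bigl(e^{i\tau(t - |x-y|)} - e^{i\tau(t+|x-y|)}\bigr)\frac{d\tau}{2\pi i\tau}$ plus the backward-time contribution; reading off the coefficient in the $d\tau$ integral and converting to the half-density normalization \eqref{principal symbol of an FIO} gives $e_\pm = \frac{1}{2\tau i}|dt\wedge dy\wedge d\eta|^{1/2}$ for $j=0$. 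This establishes the base case, and it already gives the Neumann/Robin answer at every $j$ once we know the reflection factor has modulus one.

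The second step is to identify the reflection factor. At a transversally reflecting ray the boundary condition is imposed by adding a reflected wave whose amplitude relative to the incident wave is exactly $\pm 1$: for Dirichlet conditions the reflected amplitude is $-1$ (so the total kernel vanishes on $\partial\Omega$), while for Neumann and Robin conditions the leading reflection coefficient is $+1$ (the Robin term is lower order and does not affect the principal symbol). This is precisely the content of the half-plane model computation referenced from \cite{HeZe12}. Because the billiard flow preserves the natural symplectic form on $B^*\partial\Omega$ (stated in Section \ref{Billiards}), the geometric half-density $|dt\wedge dy\wedge d\eta|^{1/2}$ is transported without a Jacobian factor under each reflection — the pullback under the broken-bicharacteristic flow of $dt\wedge dy\wedge d\eta$ is again $dt\wedge dy\wedge d\eta$ since the flow is a canonical transformation in the spatial variables and merely shifts $t$. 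Hence after $j$ reflections the only change to the principal symbol is multiplication by $(-1)^j$ in the Dirichlet case and by $1$ in the Neumann/Robin case, which yields the claimed formulas. The fact that the formula is independent of $k$ (i.e.\ the same on each connected component $\Gamma_\pm^{j,k}$) follows because all eight branches share the same number $j$ of reflections and the same flow structure.

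I would carry this out in the following order: (1) record the free-space symbol of $S_0(t)$ by an explicit Hankel-function / stationary-phase expansion, or simply by citing the standard parametrix and reading off the $\frac{1}{2\tau i}$ coefficient; (2) recall Chazarain's iterative construction from Section \ref{Chazarain's Parametrix} and state the composition law for principal symbols of FIOs with transversally intersecting canonical relations from Section \ref{Fourier Integral Operators}, noting that at transversal reflection the intersection is clean of excess zero; (3) compute the reflection coefficient by reducing to the half-plane model, getting $-1$ (Dirichlet) or $+1$ (Neumann/Robin); (4) check that the canonical half-density is invariant under the reflection map using symplecticity of $\beta$, so no amplitude Jacobian appears; (5) multiply $j$ copies together and conclude. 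The main obstacle is step (4) together with the bookkeeping in step (2): one must be careful that the composition of the restriction-to-boundary operator, the reflection, and re-propagation is a clean composition with the correct excess, and that the half-density factors (Gelfand--Leray forms) match up under the identification of $C_{\Theta_{j,\pm}^k}$ with $\Gamma_\pm^{j,k}$ so that genuinely no metric or curvature Jacobian survives; the symplectic invariance of the billiard map is exactly what rescues this, but making the half-density cancellation explicit rather than invoking it abstractly is the delicate part.
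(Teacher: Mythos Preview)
The paper does not actually prove this proposition; its entire proof consists of citations to \cite{HeZe12} (Dirichlet/Neumann) and \cite{Vig18} (Robin). Your proposal correctly sketches the argument those references carry out: start from the free-space symbol $\frac{1}{2\tau i}|dt\wedge dy\wedge d\eta|^{1/2}$ for $S_0$, then track the symbol through Chazarain's reflection construction, picking up a factor of $-1$ (Dirichlet) or $+1$ (Neumann/Robin) at each transversal reflection, with symplectic invariance of the billiard map ensuring the half-density itself is preserved. This is the standard approach and matches the cited literature, so your plan is essentially the same as the paper's (deferred) proof.
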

\begin{proof}
For a proof in the Dirichlet/Neumann case, see \cite{HeZe12}. For the Robin boundary conditions, see \cite{Vig18}.
\end{proof}

\subsection{Proof of Theorem \ref{HRP}}
\noindent As Proposition \ref{symbol prop} gives $e_\pm = \frac{1}{2 \tau i} |dt \wedge dy \wedge d\eta|^{1/2}$ and we now know that the phase functions $\Theta_{j,\pm}^k(t,\tau,x,y) = \pm \tau(t - \Psi_j^k(x,y))$ parametrize connected components of $\Gamma_\pm^j$ lying over an open neighborhood of the diagonal of the boundary, we can compute the principal term in the asymptotic expansion of $a_{j,\pm}$ appearing in the expression \eqref{osc int}. Since there are $16$ phases for each $j$, corresponding to $1 \leq k \leq 8$ and $\pm \tau > 0$, \eqref{osc int} should actually be a sum of $16$ oscillatory integrals:
\begin{align}\label{Lagrangian distribution}
S_j(t,x,y) = \sum_{\pm} \sum_{k = 1}^8  \int_0^\infty e^{i \Theta_{j, \pm}^k(t,\tau, x, y)} a_{j,k,\pm}(\tau, x, y) d\tau.
\end{align}
Recalling formula \eqref{principal symbol of an FIO} for the principal symbol of a Fourier integral operator, we must compute the Gelfand-Leray form on the critical set $d_\tau \Theta_{j, \pm}^k = 0$. The Leray measure coming from the Gelfand-Leray form is coordinate invariant and it is ultimately more convenient to first introduce boundary normal coordinates, which we now describe.
\\
\\
Fix a point $p \in \d \Omega$. For each $q \in \d \Omega$ near $p$, we denote by $\gamma_q(\mu)$ the unit speed geodesic with initial condition conormal to $\d \Omega$ at $q$ and inward pointing. Now denote by $\phi$ the boundary coordinate which parametrizes $\d \Omega$ with respect to arclength, such that $p$ is given by $\phi = 0$. The coordinate $\phi$ can be extended smoothly inside $\Omega$ so that it is constant along $\gamma_q(\mu)$ for every fixed $q$ near $p$. For $\epsilon > 0$ sufficiently small, $(\mu,\phi)$ is then a smooth coordinate system in an $\epsilon$ neighborhood of $p \in \overline{\Omega}$. In these coordinates, the Euclidean metric is locally given by the warped product
$$
g_{\text{Eucl.}} = d\mu^2 + f(\mu,\phi) d\phi^2,
$$
where $f$ is a locally defined function which is smooth up to the boundary.
\begin{lemm}
For planar domains, the metric is given by
$$
d\mu^2 + (1- \kappa \mu)^2 d \phi^2,
$$
where $\kappa$ is the curvature of the boundary at $(0,\phi)$.
\end{lemm}
\begin{proof}
Let $|\d \Omega| = \ell$ and $x(\phi) = (x_1(\phi), x_2 (\phi))$ be a parametrization of $\d \Omega$ with respect to arclength. The exponential map $\exp: [0, \epsilon) \times \R / \ell \Z \to \Omega$ is given in boundary normal coordinates by $(\mu, \phi) \mapsto x+ \mu J \dot{x}$, where
$$
J = \begin{pmatrix}
0 & -1\\
1 &0
\end{pmatrix}
$$
is the $\pi/2$ rotation matrix and $\dot{x} = \d x / \d \phi$. We calculate that in these coordinates,
$$
\exp^* g_{\text{Euclid.}} = D \exp^T g_{\text{Euclid.}}  D \exp = \begin{pmatrix}
A & B\\
C & D
\end{pmatrix},
$$
where
\begin{align*}
A &= (\dot{x_1})^2 + (\dot{x_1})^2 = 1,\\
B &= C = -\dot{x}_2\dot{x}_1 + \mu \dot{x_2}\ddot{x}_2 +\dot{x}_1 \dot{x}_2 + \mu \dot{x}_1 \ddot{x}_1 = \mu(\dot{x}_1 \ddot{x}_1 + \dot{x}_2 \ddot{x}_2) =0,\\
D &= 1 - 2 \mu \dot{x}^T J \ddot{x} + \mu^2|\ddot{x}|^2 = (1 - \mu \kappa)^2,
\end{align*}
and $\kappa = |\ddot{x}|$.
\end{proof}
We maintain the notation $f(\mu, \phi) = (1 - \mu \kappa)^2$ throughout the rest of the paper. This coordinate system is convenient near the boundary because conformal multiples the vector fields $\d/\d \mu$ and $\d/\d \phi$ extend the orthogonal and and tangential gradients respectively in a tubular neighborhood of the boundary. As the canonical relations $\Gamma_{\pm}^j$ involve both $x$ and $y$ variables, we change $x$ variables to $(\mu, \phi)$ and $y$ variables to $(\nu, \theta)$ according to the procedure described above. In \cite{Vig18}, elliptical polar coordinates were instead used of boundary normal coordinates to acheive a similar decomposition into normal and tangential vector fields.

\subsection{Proof of Theorem \ref{HRP}}
Without loss of generality, we use boundary normal coordinates instead of Euclidean coordinates in the domain of $\Psi_j^k$ from here on. We now compute the Gelfand-Leray form in boundary normal coordinates.
\begin{lemm}\label{GL Form and Critical set}
The canonical relation of each operator in \eqref{Lagrangian distribution} is parametrized in boundary normal coordinates by
$$
\Gamma_\pm^{j,k} = \{(t,\tau, \mu, \phi, \xi, \nu, \theta, \eta) = (\Psi_j^k, \tau, \mu, \phi, \mp \tau d_{\mu,\phi} \Psi_j^k, \nu, \theta, \pm \tau d_{\nu, \theta} \Psi_j^k) \}.
$$
The Gelfand-Leray form on $C_{\Theta_{j, \pm}^k}$ is given by
\begin{align*}
dC_{\Theta_{j, \pm}^k} =\mp f(\mu, \phi) f(\nu, \theta) d\tau \wedge d\mu \wedge d\phi \wedge d\nu \wedge d\theta.
\end{align*}
\end{lemm}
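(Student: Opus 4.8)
The plan is to compute both assertions directly from the definition of the phase function $\Theta_{j,\pm}^k(t,\tau,x,y) = \pm\tau(t - \Psi_j^k(x,y))$, using the already-established Lemma \ref{parameterization of cannonical relations} for the shape of $\Gamma_\pm^{j,k}$ and the coordinate change to boundary normal coordinates $x = (\mu,\phi)$, $y = (\nu,\theta)$. First I would recall that the critical set of a phase function in the fiber variable $\tau$ is $C_{\Theta_{j,\pm}^k} = \{d_\tau \Theta_{j,\pm}^k = 0\} = \{t = \Psi_j^k(x,y)\}$, which is a smooth hypersurface in $(t,\tau,x,y)$-space cut out by the single equation $t - \Psi_j^k = 0$; it is parametrized by $(\tau,\mu,\phi,\nu,\theta)$, with $t$ determined. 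The map $\iota_{\Theta_{j,\pm}^k}$ sends a point of $C_{\Theta_{j,\pm}^k}$ to $(t,\tau, x, d_x\Theta, y, d_y\Theta)$; since $d_x\Theta_{j,\pm}^k = \mp\tau\, d_{\mu,\phi}\Psi_j^k$ and $d_y\Theta_{j,\pm}^k = \mp\tau\, d_{\nu,\theta}\Psi_j^k$, and the canonical relation convention flips the sign of the $y$-covector, we get exactly the stated description $\Gamma_\pm^{j,k} = \{(\Psi_j^k,\tau,\mu,\phi,\mp\tau d_{\mu,\phi}\Psi_j^k,\nu,\theta,\pm\tau d_{\nu,\theta}\Psi_j^k)\}$; the smoothness and the local-parametrization claim are already granted by Lemma \ref{parameterization of cannonical relations}, so this part is essentially bookkeeping of signs.

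For the Gelfand--Leray form, I would use the standard recipe: if the critical set is $\{\Phi = 0\}$ with $\Phi = d_\tau\Theta_{j,\pm}^k = \pm(t - \Psi_j^k)$, then $dC_{\Theta_{j,\pm}^k}$ is the form $\omega$ on $\{\Phi = 0\}$ characterized by $d\Phi \wedge \omega = dt \wedge d\tau \wedge dx \wedge dy$ (the ambient density in the chosen coordinates), i.e. $\omega = \iota_{V}(dt\wedge d\tau\wedge dx\wedge dy)$ for any $V$ with $d\Phi(V) = 1$. Here the ambient coordinates in the $x$ and $y$ slots are the boundary normal coordinates, but the Leray form must be taken with respect to the coordinate-invariant density, so the Euclidean volume element in the $x$-variables is $\sqrt{\det g_{\mathrm{Eucl}}}\, d\mu\, d\phi = f(\mu,\phi)^{1/2} d\mu\, d\phi$ — wait, more precisely, the half-density bookkeeping of \eqref{principal symbol of an FIO} will ultimately want $|dt\wedge d\tau\wedge d\mu\wedge d\phi\wedge d\nu\wedge d\theta|$ weighted by the metric densities $f(\mu,\phi)^{1/2}f(\nu,\theta)^{1/2}$ in each slot, and since the Leray form appears squared inside a half-density the net weight is $f(\mu,\phi)f(\nu,\theta)$; I would make this precise by writing the Gelfand--Leray form of $\pm(t-\Psi_j^k)$ with respect to the invariant volume form $dt\wedge d\tau \wedge (f(\mu,\phi)\,d\mu\wedge d\phi)\wedge(f(\nu,\theta)\,d\nu\wedge d\theta)$ — no, the metric volume on a surface in $\R^2$ is $f^{1/2}$, so the honest statement is that the invariant volume in $x$ is $f(\mu,\phi)^{1/2}d\mu\,d\phi$. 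Since $\partial_t \Phi = \pm 1$, we may take $V = \pm\partial_t$, giving $\omega = \pm\, d\tau\wedge dx\wedge dy$ restricted to $\{t=\Psi_j^k\}$, and in boundary normal coordinates with the metric densities folded in this reads $dC_{\Theta_{j,\pm}^k} = \mp\, f(\mu,\phi)f(\nu,\theta)\, d\tau\wedge d\mu\wedge d\phi\wedge d\nu\wedge d\theta$ once one accounts for orientation and the convention that $dC$ enters the half-density formula, matching the claimed formula.

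The main obstacle I anticipate is pinning down the correct power of $f$ and the overall sign with full rigor — specifically, tracking whether the $f$-weights enter linearly (because the Gelfand--Leray form sits inside a \emph{half}-density $|dC|^{1/2}$ in \eqref{principal symbol of an FIO}, so a single metric volume factor $f^{1/2}$ per variable-block would give $f^{1/2}$, not $f$, unless the invariant ambient volume itself already carries $f$) and whether the sign $\mp$ versus $\pm$ is consistent with the chosen orientation of $C_{\Theta_{j,\pm}^k}$ and the ordering $d\tau\wedge d\mu\wedge d\phi\wedge d\nu\wedge d\theta$. The cleanest route is: (i) write the ambient symplectic/volume data on $T^*(\R\times\Omega\times\Omega)$ invariantly, (ii) restrict to the fiber-critical set via the single defining function $d_\tau\Theta$, (iii) push forward to $\Gamma_\pm^{j,k}$ under the diffeomorphism $\iota_\Theta$ and express the result in the $(\mu,\phi,\nu,\theta)$ chart, where the Jacobian of $(x,y)\mapsto$ boundary normal coordinates contributes the factors $f(\mu,\phi)^{1/2}f(\nu,\theta)^{1/2}$ per copy, which when the density is not yet halved gives the stated $f(\mu,\phi)f(\nu,\theta)$. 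I would present this as a short computation and flag the sign as a matter of fixed orientation convention, since only $|dC_{\Theta_{j,\pm}^k}|^{1/2}$ is used downstream.
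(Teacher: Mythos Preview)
Your approach is essentially the same as the paper's: identify the critical set $C_{\Theta_{j,\pm}^k}=\{t=\Psi_j^k\}$ from $d_\tau\Theta_{j,\pm}^k=\pm(t-\Psi_j^k)$, read off the parametrization of $\Gamma_\pm^{j,k}$ via $\iota_\Theta$ (citing Lemma~\ref{parameterization of cannonical relations}), and then determine $dC_{\Theta_{j,\pm}^k}$ from the defining relation $d(d_\tau\Theta)\wedge dC=\text{ambient volume}$.

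The only place you hesitate---the power of $f$---is handled in the paper simply by \emph{declaring} the ambient volume on $\R_t\times\R_\tau\times\Omega\times\Omega$ to be $f(\mu,\phi)f(\nu,\theta)\,dt\wedge d\tau\wedge d\mu\wedge d\phi\wedge d\nu\wedge d\theta$, after which $d(d_\tau\Theta_{j,\pm}^k)=\pm dt+\cdots$ immediately gives the stated $\mp f\cdot f$ form. Your instinct that this is a convention issue is correct: the paper uses the same weight when writing $dy=f(\nu,\theta)\,d\nu\wedge d\theta$ in the computation of $dt\wedge dy\wedge d\eta$ just after the lemma, so the $f$-factors are consistent between $dC$ and the target half-density $|dt\wedge dy\wedge d\eta|^{1/2}$, and only $|dC|^{1/2}$ enters downstream. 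You should drop the back-and-forth and simply fix the ambient volume with weight $f\cdot f$ as the paper does; the sign $\mp$ then falls out from $d(d_\tau\Theta_{j,\pm}^k)\wedge(\cdot)$ with the ordering $dt\wedge d\tau\wedge d\mu\wedge d\phi\wedge d\nu\wedge d\theta$.
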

\begin{proof}
The first claim follows directly from Lemma \ref{parameterization of cannonical relations} and the observation that
$$
d_\tau {\Theta_{j, \pm}^k} = \pm (t - \Psi_j^k) = 0
$$
on the critical set $C_{\Theta_{j, \pm}^k}$. From this, it is clear that $(\tau, \mu, \phi, \nu, \theta)$ form a smooth coordinate system on $C_{\Theta_{j, \pm}^k}$. The Gelfand-Leray form is uniquely defined on $C_{\Theta_{j, \pm}^k}$ by the condition
\begin{align}\label{GLF}
d \left(d_\tau{\Theta_{j, \pm}^k}\right) \wedge dC_{\Theta_{j, \pm}^k} = f(\mu, \phi) f(\nu, \theta) dt \wedge d \tau \wedge d(\mu, \phi) \wedge d(\nu, \theta),
\end{align}
where the righthand side of \eqref{GLF} coincides with the Euclidian volume form on $\R_t \times \R_\tau \times \Omega \times \Omega$. Hence, 
$$
dC_{\Theta_{j, \pm}^k} =\mp f(\mu, \phi) f(\nu, \theta) d\tau \wedge d\mu \wedge d\phi \wedge d\nu \wedge d\theta, 
$$
as claimed.
\end{proof}
\noindent We now change variables and use Lemma \ref{GL Form and Critical set} to compute the principal symbol of \eqref{Lagrangian distribution}. Dropping the $j,k$ indices on $\Psi_j^k$ in place of differentiation, we have
\begin{align*}
dt &= \Psi_{\mu} d\mu + \Psi_{\phi}d\phi +\Psi_{\nu} d\nu + \Psi_{\theta}d\theta,\\
dy &= f(\nu, \theta) d\nu \wedge d\theta,\\ 
d{\eta_1} &= \Psi_{\nu} d\tau + \tau( \Psi_{\mu \nu} d\mu + \Psi_{\nu \phi} d\phi + \Psi_{\nu \nu} d\nu + \Psi_{\nu \theta} d\theta),\\
d\eta_2 &= \Psi_{\theta} d\tau +  \tau( \Psi_{ \theta\mu} d\mu + \Psi_{\theta \phi} d\phi + \Psi_{ \theta \nu} d\nu + \Psi_{ \theta \theta} d\theta  )
\end{align*}
on the canonical relation $\Gamma_{\pm}^{j,k}$. Wedging all these terms together, we see that
\begin{align}\label{Aj}
\begin{split}
dt \wedge dy \wedge d\eta = & \tau (f(\nu, \theta)  ( \Psi_{\mu} \Psi_{ \theta} \Psi_{\nu \phi} + \Psi_{\phi} \Psi_{\nu} \Psi_{ \theta \mu}\\
&- \Psi_{\mu}\Psi_{\nu} \Psi_{\theta \phi} - \Psi_{\phi} \Psi_{\theta} \Psi_{\nu \mu} )) d\tau \wedge d\mu \wedge d\phi \wedge d\nu \wedge d\theta.
\end{split}
\end{align}
\begin{def1}
Denote by $A_j^k(\mu,\phi, \nu, \theta)$ the functions
$$
\frac{1}{f(\mu, \phi)}  ( \Psi_{\mu} \Psi_{ \theta} \Psi_{\nu \phi} + \Psi_{\phi} \Psi_{\nu} \Psi_{ \theta \mu} - \Psi_{\mu}\Psi_{\nu} \Psi_{\theta \phi} - \Psi_{\phi} \Psi_{\theta} \Psi_{\nu \mu} ),
$$
where each $\Psi = \Psi_j^k$ depends implicitly on $j, k, \mu,\phi,\nu$ and $\theta$.
\end{def1}
\noindent On each of the canonical relations $\Gamma_\pm^{j,k}$ (cf. Lemma \ref{parameterization of cannonical relations}), equation \eqref{Aj} implies that
\begin{align}\label{symbol D}
e_\pm = (-1)^j \frac{|dt \wedge dy \wedge d\eta|^{1/2}}{2\tau i} = {\iota_{\Theta_{j, \pm}^k}}_* \left( (-1)^j  \frac{1}{2 \tau^{1/2} i}|A_j^k(\mu, \phi, \nu, \theta) dC_{\Theta_{j, \pm}^k}|^{1/2} \right)
\end{align}
for Dirichlet boundary conditions and
\begin{align}\label{symbol N}
e_\pm = \frac{|dt \wedge dy \wedge d\eta|^{1/2}}{2\tau i} =  {\iota_{\Theta_{j, \pm}^k}}_* \left(\frac{1}{2 \tau^{1/2} i}|A_j^k(\mu, \phi, \nu , \theta) dC_{\Theta_{j, \pm}^k}|^{1/2} \right)
\end{align}
for Neumann boundary conditions. As a result, we conclude the following description of the operators $S_j$.
\begin{theo}\label{parametrix with sum}
Microlocally near $\Gamma_\pm^j$, the following oscillatory integral is a parametrix for $S_j(t,x,y)$ with Dirichlet boundary conditions in an open neighborhood of $\Delta \d \Omega \subset \Omega \times \Omega$:
\begin{align*}
S_j(t)  = (-1)^j \sum_{\pm} \sum_{k = 1}^8 \int_{0}^\infty e^{\pm i\tau(t - \Psi_j^k)} \frac{1 }{2 i \tau^{1/2} } |A_j^k|^{1/2} d\tau + L.O.T.
\end{align*}
Here, L.O.T. denotes lower order Lagrangian distributions, using the convention \eqref{order of a lagrangian distribution}.
\end{theo}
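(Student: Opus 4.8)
The plan is to assemble the ingredients already in place. By Chazarain's construction \cite{Ch76} (Section \ref{Chazarain's Parametrix}), $S_j$ is a Lagrangian distribution with canonical relation $\Gamma_\pm^j$; after composing with the microlocal cutoffs introduced in Section \ref{Oscillatory integral representation} we may work microlocally near the periodic orbits of rotation number $1/j$ and over the tubular neighborhood of $\Delta\d\Omega$. There, Lemma \ref{parameterization of cannonical relations} says that the fibers of $\Gamma_+^j$ and $\Gamma_-^j$ over this neighborhood each split into eight connected, mutually non-intersecting Lagrangian sheets $\Gamma_\pm^{j,k}$ ($1\le k\le 8$), and that $\Gamma_\pm^{j,k}$ is parametrized by the phase $\Theta_{j,\pm}^k(t,\tau,x,y)=\pm\tau(t-\Psi_j^k(x,y))$ with the single fiber variable $\tau\in(0,\infty)$, $\Psi_j^k$ being the length functional of Definition \ref{psi jk}. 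Writing the local representation of $S_j$ on each sheet and summing over the sixteen sheets produces the ansatz \eqref{Lagrangian distribution}, with classical symbols $a_{j,k,\pm}(\tau,x,y)$ still to be determined; this is a finite sum because Chazarain's sum \eqref{Chazarain sum} is locally finite in $t$.

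The core step is to identify the principal parts $a_{j,k,\pm}^0$. By the pushforward formula \eqref{principal symbol of an FIO}, the principal symbol of the $(k,\pm)$ summand, as a half-density on $\Gamma_\pm^{j,k}$, is $({\iota_{\Theta_{j,\pm}^k}})_*\bigl(a_{j,k,\pm}^0\,|dC_{\Theta_{j,\pm}^k}|^{1/2}\bigr)$, while Proposition \ref{symbol prop} gives the principal symbol of $S(t)$ along every sheet of $\Gamma$ as the intrinsic half-density $e_\pm=(-1)^j(2\tau i)^{-1}|dt\wedge dy\wedge d\eta|^{1/2}$ for Dirichlet data. Equating these on each $\Gamma_\pm^{j,k}$ reduces everything to one Jacobian computation, namely expressing $dt\wedge dy\wedge d\eta$ restricted to $\Gamma_\pm^{j,k}$ in terms of $dC_{\Theta_{j,\pm}^k}$. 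This is precisely Lemma \ref{GL Form and Critical set}: in boundary normal coordinates $x=(\mu,\phi)$, $y=(\nu,\theta)$ it identifies the critical set, gives $dC_{\Theta_{j,\pm}^k}=\mp f(\mu,\phi)f(\nu,\theta)\,d\tau\wedge d\mu\wedge d\phi\wedge d\nu\wedge d\theta$, and, combined with the explicit wedge product \eqref{Aj}, exhibits $dt\wedge dy\wedge d\eta$ on $\Gamma_\pm^{j,k}$ as $\pm\tau A_j^k$ times $dC_{\Theta_{j,\pm}^k}$ modulo the conformal weights $f$ (which equal $1$ on $\d\Omega$). Solving for the scalar symbol yields $a_{j,k,\pm}^0=(-1)^j\,\dfrac{\pm1}{2|\tau|^{1/2}i}\,|A_j^k|^{1/2}$, which is \eqref{symbol D}, and all remaining terms in the expansion of $a_{j,k,\pm}$ are lower-order Lagrangian contributions in the sense of \eqref{order of a lagrangian distribution}. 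The Neumann/Robin version is identical, with $(-1)^j$ dropped and \eqref{symbol N} used in place of \eqref{symbol D}.

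The bookkeeping in the second step is the main obstacle I anticipate: one must faithfully match the \emph{intrinsically} defined half-density symbol of Proposition \ref{symbol prop} against the \emph{parametrization-dependent} expression \eqref{principal symbol of an FIO}, which in practice means getting the change-of-variables Jacobian $dt\wedge dy\wedge d\eta / dC_{\Theta_{j,\pm}^k}$ exactly right in boundary normal coordinates --- conformal weights $f(\mu,\phi)$, $f(\nu,\theta)$ included --- and keeping the scalar part of the symbol cleanly separated from the half-density factor. Two points should be flagged rather than suppressed. First, the $e^{\pm i\pi/4}$-type Maslov factors attached to the sheets $\Gamma_\pm^{j,k}$ are genuinely present but are deferred to Section \ref{Maslov factors}, so only the half-density part of the symbol is tracked here; this is why the statement can be written with a bare $|A_j^k|^{1/2}$. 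Second, passing from Euclidean to boundary normal coordinates is harmless because the Leray measure attached to the Gelfand--Leray form is coordinate invariant, so the normalization \eqref{GLF} may be imposed in whichever chart is convenient. Finally, the clash between the order $-5/4$ quoted for $S_j$ in \cite{Ch76} and the order carried by $|\tau|^{-1/2}|A_j^k|^{1/2}$ is the order correction advertised in the introduction and independently verified in Section \ref{Check on the order}.
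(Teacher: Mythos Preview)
Your proposal is correct and follows essentially the same route as the paper: the theorem is stated immediately after the chain Lemma \ref{parameterization of cannonical relations} $\to$ Proposition \ref{symbol prop} $\to$ Lemma \ref{GL Form and Critical set} $\to$ equation \eqref{Aj} $\to$ equation \eqref{symbol D}, and the paper's proof is literally the sentence ``As a result, we conclude the following description of the operators $S_j$.'' You have accurately reconstructed this assembly, including the deferral of Maslov factors to Section \ref{Maslov factors} and the coordinate invariance of the Leray measure that justifies working in boundary normal coordinates.
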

\begin{def1}
We define the operators $S_{j,\pm}^k(t)$ appearing in Theorem \ref{parametrix with sum} by 
$$
S_{j,\pm}^k(t) =  (-1)^j \sum_{\pm} \sum_{k = 1}^8 \int_{0}^\infty e^{\pm i\tau(t - \Psi_j^k)} \frac{ 1 }{2 i \tau^{1/2}} |A_j^k|^{1/2} d\tau.
$$
\end{def1}
\noindent Together with Theorem \ref{8 orbit lemma} and the choice of $j_0$, $U_j$ in Remark \ref{Uj}, we can immediately derive Theorem \ref{HRP} from Theorem \ref{parametrix with sum}. To see this, note that
$$
E(t) = \frac{d}{dt} S(t).
$$
On a symbolic level, this implies that the symbol of $E(t)$ is $i \tau e_\pm$, with $e_\pm$ given by Proposition \ref{symbol prop}. This explains the order of $b_{j,k,\pm} \in S^{1/2}( U_j \times \R^1)$ appearing in Theorem \ref{HRP}. Each $(\pm)$ branch of the propagators $S_{j,\pm}^k$ should be multiplied by Maslov factors $e^{\pm i \pi \sigma_j/4}$. We will compute $\sigma_j = 1$ in Section \ref{Maslov factors}, which completes the proof of Theorem \ref{HRP}. In the next section, we will use this explicit oscillatory integral representation to compute the wave trace.

\section{Computing the wave trace}\label{Computing the wave trace}
\noindent In this section, we use the parametrix in Theorem \ref{HRP}, or equivalently Theorem \ref{parametrix with sum}, to compute an integral formula for the wave trace. Formally, the wave trace is the Fourier transform of the spectral measure $\sum_j \delta(\lambda - \lambda_j)$, where $\{\lambda_j^2\}$ are the Dirichlet (resp. Neumann or Robin) eigenvalues of $-\Delta$ on $\Omega$. This is a distribution of the form
\begin{align}\label{eigenvalue wave trace}
\sum_j e^{i t \lambda_j},
\end{align}
which can be seen to be weakly convergent by Weyl's law on the asymptotic distribution of Laplace eigenvalues (see \cite{Irvii16}, \cite{Zayed2004}). The connection between this distribution and the wave equation lies in the fact that \eqref{eigenvalue wave trace} is actually the trace of $e^{i t \sqrt{- \Delta}}$, the propagator associated to the half wave operator $(\d_t - i \sqrt{-\Delta})$. Since such a unitary operator is not trace class, we mean that for any Schwartz function $\phi$, the regularized operators
$$
\int_{-\infty}^\infty \phi(t) e^{it \sqrt{-\Delta}}dt
$$
are of trace class and have trace
$$
\sum_j \int_{-\infty}^\infty \phi(t) e^{it \lambda_j}dt.
$$
The same holds for the even and odd wave operators
\begin{align}
E(t) = \cos t\sqrt{- \Delta}, \qquad S(t) = \frac{\sin t \sqrt{-\Delta}}{\sqrt{-\Delta}},
\end{align}
and we consider these as they appear more naturally in Chazarain's parametrix (cf. Section \ref{Chazarain's Parametrix}) and $E(t)$ solves the initial boundary value problem \eqref{wave equation}.
\\
\\
Recall that $\chi_1(t)$ is a cutoff function near the lengths $[t_j,T_j]$ of geodesic loops and $\Omega$ is assumed to satisfy the noncoincidence condition \eqref{NCC}. Modulo Maslov factors and a smooth error term, we have
\begin{align}\label{localized wave trace}
\chi_1(t) \text{Tr} E &= \int_{\Omega} E_j(t,x,x) dx,
\end{align}
where $E_j = \frac{\d}{\d t} S_j$ and $S_j$ is the $j$ bounce wave appearing in Chazarain's parametrix \eqref{Chazarain sum}.

\subsection{Reduction to boundary}
As our parametrix in Theorem \ref{parametrix with sum} is only valid near the boundary, we want to write the wave trace as an integral over the boundary. There are several ways to do this, one of which involves using Hadamard type variational formulas from \cite{HeZe12} and \cite{Vig18} to integrate the radially differentiated wave trace, which via an integration by parts puts the integral \eqref{localized wave trace} on the boundary. Here we use a different technique, suggested by the referree. Let us first establish some notation. Fixing an arbitrary point $O \in \Omega$ to be the origin, we denote by $X(q)$ the position vector of a point $q \in \d \Omega$ relative to $O$ and $N(q)$ the outward unit normal at $q$. Let $\nabla^\perp$ and $\nabla^T$ denote the unit normal and tangential gradients with respect to $\d \Omega$ and recall the notation $D_t = -i \d_t$.
\begin{lemm}
	For each Dirichlet eigenfunction $u_j$ of eigenvalue $\lambda_j^2$ in \ref{inverse problem}, we have
	\begin{align}
	\lambda_j^2 = \frac{1}{2} \int_{\d \Omega} \langle X, N \rangle |\nabla^\perp u_j(q)|^2 dq,
	\end{align}
	and hence
	\begin{align}
	D_t^2 \text{Tr} S(t) = \frac{1}{2} \int_{\d \Omega} \langle X, N \rangle \nabla_1^\perp \nabla_2^\perp S(t,q,q) dq.
	\end{align}
\end{lemm}
\begin{proof}
	For the first formula, we use the commutator identity $[\Delta, r \d_r] = 2 \Delta$, where $r = |X|$ and $\d_r$ is the radial vector field, to see that
	\begin{align*}
	2 \lambda_j^2 &= - 2 \int_{\Omega} \Delta u_j \overline{u_j} dx = \int_{\Omega} [- \Delta, r\d_r] u_j \overline{u_j} dx = \int_{\d \Omega} r \frac{\d u_j}{\d r} \frac{\d \overline{ u_j}}{\d N} dq.
	\end{align*}
Here we have integrated by parts and used Dirichlet boundary conditions. Decomposing $r\d_r$ into the normal and tangential vector fields $N, \d_q$ and again using the boundary condition, we obtain
\begin{align*}
r \frac{\d u}{\d r} = \langle X, N \rangle \nabla^\perp u_j.
\end{align*}
The trace formula follows by writing
\begin{align*}
D_t^2 \text{Tr} S(t) &= \sum_j \lambda_j^2 \frac{\sin \lambda_j t}{\lambda_j} = \frac{1}{2} \int_{\d \Omega} \langle X, N \rangle \sum_j \frac{\sin t \lambda_j}{\lambda_j} |\nabla^\perp u_j|^2 dq\\
&=  \frac{1}{2} \int_{\d \Omega} \langle X, N \rangle \nabla_1^\perp \nabla_2^\perp S(t,q, q) dq.
\end{align*}	
\end{proof}

As we are localizing the wave trace near lengths of geodesic loops with $j$ reflections, it turns out we only need to consider a select few of the operators from Theorem \ref{parametrix with sum} in the trace formula. From here on, we only consider Dirichlet boundary conditions. Similar formulas exist in the Neumann and Robin cases. We also use the notation $D_t^{-1} \in \Psi_{\text{ell}}^{-1}(\R)$ to denote an elliptic parametrix (Fourier multiplier $\tau^{-1}$) for $D_t$.
\begin{lemm}\label{Sj lemma}
Modulo Maslov factors and lower order distributions, the even Dirichlet wave trace localized near $[t_j,T_j]$ is given by
\begin{align*}
\text{Tr} E(t) = \sum_\pm \int_{\d \Omega} \frac{\langle X, N \rangle}{2} i D_t^{-1} \nabla_1^\perp \nabla_2^\perp (&S_{j-1,\pm}^1  +S_{j-1,\pm}^5 + S_{j,\pm}^2 + S_{j,\pm}^3 \\
& + S_{j,\pm}^6 + S_{j,\pm}^7 + S_{j + 1,\pm}^4 +  S_{j +1,\pm}^8 )(t,q,q) \,dq.
\end{align*}
\end{lemm}
\noindent For completeness, we repeat the proof derived in \cite{Vig18} as it contains substantial geometric insight.
\begin{proof}
First note that $E(t) = \d_t S(t)$, so that
\begin{align*}
\text{Tr} E(t) = i D_t^{-1} D_t^2 \text{Tr} S(t) = \int_{\d \Omega} \frac{\langle X, N \rangle}{2} i D_t^{-1} \nabla_1^\perp \nabla_2^\perp S(t,q,q)dq.
\end{align*}
For the localized wave trace, we only need to consider orbits which contribute to the singularities in $[t_j,T_j]$. Recall that for positive time, Theorem \ref{8 orbit lemma} gives $8$ orbits connecting $x$ to $y$ in $j$ reflections and approximately one rotation. These orbits coalesce into geodesic loops as $(x, y) \to \Delta \d \Omega$. However, as the orbits coalesce within various configurations, not all of the limiting orbits will have $j$ reflections. As $\Omega$ satisfies the noncoincidence condition \eqref{NCC}, only the limiting geodesic loops of rotation number $1/j$ will contribute to the wave trace near $[t_j,T_j]$. See Figure \ref{configurations} for visualizing the geometric arguments which follow. We will say that a sequence of billiard orbits $\gamma_n$ converges geometrically to another orbit $\gamma_0$ if all impact points on the boundary converge to those of $\gamma_0$ in order. Note that the limiting orbit may have a different number of reflections if certain impact points coalesce in the limit. To demonstrate geometric convergence in this sense, it suffices to show convergence of any two consecutive impact points to two distinct points in the limiting orbit. All other points of reflection on the boundary are then smoothly and implicitly determined by the limiting link formed between these two.
\\
\\
As $(x, y) \to \Delta \d \Omega$, the two orbits of $j$ reflections in $TT$ configuration $(k = 1,5)$ converge geometrically to a loop of $j+1$ reflections. The additional vertex appears at the boundary point where $x$ and $y$ coalesce. Similarly, the two $NN$ orbits of $j$ reflections $(k = 4,8)$ can be seen to converge to loops of $j-1$ reflections. In this case, the first and last points of reflection at the boundary converge to a single impact point. The four orbits of $j$ reflections in $TN$ $(k = 2,6)$ and $NT$ $(k = 3,7)$ configurations preserve exactly $j$ reflections in the limit. Hence, when $x, y \in \text{int} \Omega$ converge to $\Delta \d \Omega$, only $4$ of the $8$ orbits contribute to geodesic loops of $j$ reflections. However, in the same limit, two additional $TT$ orbits of $j-1$ reflections converge to a loop of $(j - 1) + 1 = j$ reflections. Similarly, two $NN$ orbits of $j+1$ reflections converge to a loop of $(j+1) -1 = j$ reflections. Any other orbit from $x$ to $y$ with strictly less than $j-1$ or strictly more than $j+1$ reflections at the boundary cannot converge to a loop of $j$ reflections. As we have localized the wave trace near the isolated set of lengths $[t_j,T_j]$ and $\Omega$ satisfies the noncoincidence condition \eqref{NCC}, only the $4 + 2 + 2 = 8$ orbits which converge geometrically to a loop of exactly $j$ reflections will contribute to singularities here. All additional orbits contribute smooth errors to the wave trace in a small neighborhood of $[t_j,T_j]$.
\\
\\
It should also be clarified that although the parametrices $S_j(t,x,y)$ are constructed in the interior, we can in fact extend them continuously to the diagonal of the boundary and this extension coincides with that of the true propagator $S(t,x,x)$ ($x \in \d \Omega$) modulo lower order terms. Both propagators agree up to lower order Lagrangian distributions in the interior, microlocally near the canonical relations $\Gamma_\pm^j$. The explicit oscillatory integral representation for each $S_j(t,x,y)$ in fact shows that they extend continuously up to the boundary and its diagonal, since the functions $\Psi_j^k(x,y)$ do. The true wave kernel $S(t,x,y)$ also extends continuously up to the boundary as a family of distributions. To see this, note that
$$
S(t,x,y) = \sum_j \frac{\sin ( t \lambda_j)}{\lambda_j} \psi_j(x) \overline{\psi_j(y)},
$$
where $(\psi_j)_{j =1}^\infty$ is an $L^2$ orthonormal basis of Dirichlet or Neumann eignenfunctions corresponding to eigenvalues $(\lambda_j^2)_{j = 1}^\infty$. Multiplying by a test function $\phi(t)$ and integrating by parts $4k$ times, we see that
\begin{align}\label{wave kernel}
\int_{-\infty}^\infty S(t,x,y) \phi(t) dt = \int_{-\infty}^\infty \sum_j \frac{\sin (t \lambda_j)}{\lambda_j^{4k+1}} \psi_j(x) \overline{\psi_j(y)} \d_t^{4k}  \phi(t) dt. 
\end{align}
Combining Weyl's law on the asymptotic growth of $\lambda_j$ (see \cite{Zayed2004}, \cite{Irvii16}) and  $L^\infty$ bounds for eigenfunctions on manifolds with boundary (see \cite{Grieser}), we see that the integrand in \eqref{wave kernel} can be made absolutely convergent for $k$ sufficiently large. An application of the dominated convergence theorem then shows that \eqref{wave kernel} is actually smooth in $x,y$, so $S(t,x,y)$ has a smooth extension to the diagonal of the boundary as a distribution in $t$. In particular, both distributions agree up to lower order terms microlocally near the fibers of $\Gamma_\pm^j$ lying over diagonal of the boundary.
\end{proof}

\begin{def1}\label{psi j coincide on boundary}
As shown in the proof of Lemma \ref{Sj lemma} above, for each $j$, there exist $8$ limiting trajectories which converge geometrically to geodesic loops of exactly $j$ reflections. We denote the set of these trajectories by $\mathcal{G}_j(x,y)$ and say that $\gamma_{m,k} \in \mathcal{G}_j$ if $\gamma_{m,k}$ makes $m = j-1, j$ or $j+1$ reflections at the boundary and corresponds to the length functional $\Psi_m^k$. Since there is a unique geodesic loop of rotation number $1/j$ at each boundary point, the length functionals $\Psi_j^2,\Psi_j^3 , \Psi_j^6 , \Psi_j^7 , \Psi_{j + 1}^4 , \Psi_{j +1}^8 , \Psi_{j-1}^1$ and $\Psi_{j-1}^5$ corresponding to orbits in $\mathcal{G}_j$ coincide for $x =y \in \d \Omega$ on the diagonal. We denote their common value by $\Psi_j(x,x)$, which is the $j$-loop function appearing in Definition \ref{jloop}.
\end{def1}

\subsection{Boundary calculations}
\noindent As we obtained a rather explicit formula for $S_j(t,x,y)$ in Theorem \ref{parametrix with sum}, it now remains to differentiate the kernels $S_m^k(t,x,y)$ and substitute them into Lemma \ref{Sj lemma}. Using our oscillatory integral representation for $S_m^k(t,x,y)$ in Theorem \ref{parametrix with sum}, we find that microlocally near $\Gamma_\pm^{m,k}$ and $t \in [t_j,T_j]$, modulo lower order terms in an open neighborhood of the diagonal of the boundary, we have
\begin{align}\label{parametrix with 8 Dirichlet}
i D_t^{-1} \nabla_1^\perp \nabla_2^\perp S_{m,\pm}^k = (-1)^{m+1} \sum_\pm \int_{0}^\infty e^{\pm i\tau(t - \Psi_m^k)} \frac{\pm (\nabla_1^\perp \Psi_m^k)(\nabla_2^\perp \Psi_m^k) \tau^{3/2}}{2\tau } |A_m^k|^{1/2} d\tau
\end{align}
for Dirichlet boundary conditions. 
We have only written the terms coming from $\nabla_{1,2}^\perp$ falling on the exponential in equation \eqref{parametrix with 8 Dirichlet}, as all other terms don't contribute positive powers of $\tau$ and can be regarded as lower order in the singularity expansion. The operators $\nabla_{1,2}^\perp$ in the integrand of \eqref{parametrix with 8 Dirichlet} are conformal multiples of the vector fields $\frac{\d}{\d \mu}$ and $\frac{\d}{\d \nu}$ coming from boundary normal coordinates.
\\
\\
As Lemma \ref{Sj lemma} tells us that the wave trace is given by integrating the normally differentiated sine kernels over the diagonal of the boundary, we want to understand the restriction of \eqref{parametrix with 8 Dirichlet} to the boundary. We already noted that the $j$-loop function is well defined for $j$ sufficiently large.
The differentiated kernels in equation \eqref{parametrix with 8 Dirichlet} also have factors of $A_j^k$ and $\nabla_{1,2}^\perp \Psi_j^k$ in the integrand. We now discuss how to extend these derivatives of $\Psi_j^k$ to the diagonal of the boundary in a similar manner. In the proof of Lemma \ref{parameterization of cannonical relations}, the $x$ and $y$ gradients of the functions $\Psi_j^k$ are shown to be:
\begin{align}\label{x y gradients}
d_x \Psi_j^k = \frac{x - q_1^k}{|x - q_1^k|}, \quad d_y \Psi_j^k = \frac{y - q_j^k}{|y - q_j^k|}.
\end{align}
Geometrically, these are the incident and reflected outgoing unit directions of the corresponding billiard trajectories at $x$ and $y$. The expression $\nabla_1^\perp \Psi_j^k$ in \eqref{parametrix with 8 Dirichlet} can easily be seen to be $\pm \sin \omega_{j,1}^k$, where $\omega_{j,1}^k$ is the angle made between the initial link of the billiard trajectory and the oriented tangent line to the level set of the distance function on which $x$ lies (as in the folliation used in the proof of Theorem \ref{8 orbit lemma}). We use the positively oriented tangent line for $1 \leq k \leq 4$ and the negatively oriented tangent line for $5 \leq k \leq 8$. The sign $\pm$ depends on the $TT, TN, NT$ or $NN$ configuration of the corresponding orbit. Similarly, $\nabla_2^\perp \Psi_j^k = \pm \sin \omega_{j,2}^k$, where $\omega_{j,2}^k$ is the angle made between the final link of the billiard trajectory and the positively $(1 \leq k \leq 4)$ or negatively $(5 \leq k \leq 8)$ oriented tangent line to the distance curve on which $y$ lies. As $x,y \to  \Delta \d \Omega$, the absolute value of the angles associated to trajectories in the $\mathcal{G}_j$ converge to the initial and final angles of reflection of the unique limiting geodesic loop. We are careful to point out that only the absolute values of the angles converge, since for example, the final angles of reflection at $y$ associated to orbits in $TN$ and $NN$ configurations actually converge to the \textit{negative} of the final angle in the limiting trajectory. All limiting loops are automatically in $TT$ configuration.
\begin{lemm}\label{Aj w derivatives}
On the diagonal of the boundary, the factors $A_{j-1}^1, A_{j-1}^5 A_j^2, A_j^3, A_j^6, A_j^7, A_{j+1}^4$ and $A_{j+1}^8$ corresponding to orbits in $\mathcal{G}_j$ coincide up to a sign. We denote their common (absolute) value by $|A_j|$, which at a point $(0,\phi, 0, \phi)$ in boundary normal coordinates satisfies
\begin{align*}
|A_j(0, \phi, 0, \phi)| =\frac{1}{\sin \omega_{j,2}}  \left| \frac{\d \omega_{j,1}}{\d \theta} \right|.
\end{align*}
Here, $\omega_{j,1}$ and $\omega_{j,2}$ are the initial and final angles of incidence respectively made by the unique counterclockwise parametrized geodesic loop of rotation number $1/j$ based at $(0, \phi, 0, \phi) \in \Delta \d \Omega$.
\end{lemm}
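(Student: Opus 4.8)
\emph{Proof proposal.} The plan is to collapse the four-term combination defining $A_j^k$ to a single ratio by using that $d_x\Psi_j^k$ and $d_y\Psi_j^k$ are unit covectors, and then to identify the surviving quantity geometrically.

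First I would reduce to a single branch. By the geometric discussion in the proof of Lemma \ref{Sj lemma}, the trajectories attached to $\Psi_{j-1}^1,\Psi_{j-1}^5,\Psi_j^2,\Psi_j^3,\Psi_j^6,\Psi_j^7,\Psi_{j+1}^4,\Psi_{j+1}^8$ all degenerate, as $(x,y)\to\Delta\partial\Omega$, to the same geodesic loop of rotation number $1/j$; the computation below evaluates each corresponding $A_m^k$ to $\pm\frac{1}{\sin\omega_{j,2}}\bigl|\frac{\partial\omega_{j,1}}{\partial\theta}\bigr|$, so the coincidence-up-to-sign asserted in the lemma comes out as a by-product and it suffices to fix one of these branches and write $\Psi$ for $\Psi_m^k$. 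At a diagonal boundary point $(0,\phi,0,\phi)$ one has $f=1$, so $A_j=\Psi_\mu\Psi_\theta\Psi_{\nu\phi}+\Psi_\phi\Psi_\nu\Psi_{\theta\mu}-\Psi_\mu\Psi_\nu\Psi_{\theta\phi}-\Psi_\phi\Psi_\theta\Psi_{\nu\mu}$.

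The engine of the proof is the gradient formula \eqref{x y gradients}: $d_x\Psi$ and $d_y\Psi$ are Euclidean unit vectors, hence unit covectors for $g_{\mathrm{Eucl}}=d\mu^2+f\,d\phi^2$, so $\Psi_\mu^2+\Psi_\phi^2/f\equiv 1$ and $\Psi_\nu^2+\Psi_\theta^2/f\equiv 1$ identically in $(\mu,\phi,\nu,\theta)$. Differentiating the first identity in $\nu$ and in $\theta$ (and using $\partial_\nu f=\partial_\theta f=0$) gives, on $\partial\Omega$, $\Psi_\mu\Psi_{\nu\mu}=-\Psi_\phi\Psi_{\nu\phi}$ and $\Psi_\mu\Psi_{\theta\mu}=-\Psi_\phi\Psi_{\theta\phi}$. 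Substituting these into $A_j$, the factor $\Psi_\mu^2+\Psi_\phi^2=1$ pulls out and leaves $A_j=\Psi_\phi^{-1}\bigl(\Psi_\nu\,\partial_\theta\Psi_\mu-\Psi_\theta\,\partial_\nu\Psi_\mu\bigr)$. Next I would invoke the elementary billiard fact that displacing $y$ along the direction of the final link — the direction $d_y\Psi^\sharp$, with boundary components $(\Psi_\nu,\Psi_\theta)$ — only lengthens that link and changes neither the reflection points nor the first link, hence leaves $d_x\Psi$, and in particular $\Psi_\mu$, fixed; thus $\Psi_\nu\,\partial_\nu\Psi_\mu+\Psi_\theta\,\partial_\theta\Psi_\mu=0$, so $(\partial_\nu\Psi_\mu,\partial_\theta\Psi_\mu)$ is parallel to $(-\Psi_\theta,\Psi_\nu)$ and $\Psi_\nu\,\partial_\theta\Psi_\mu-\Psi_\theta\,\partial_\nu\Psi_\mu=(\Psi_\nu^2+\Psi_\theta^2)\Psi_\nu^{-1}\partial_\theta\Psi_\mu=\Psi_\nu^{-1}\Psi_{\theta\mu}$. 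Hence $A_j=\Psi_{\theta\mu}/(\Psi_\phi\Psi_\nu)$. Finally, on the diagonal $\Psi_\mu=\nabla_1^\perp\Psi=\pm\sin\omega_{j,1}$ (the conformal factor is $1$ since $g_{\mu\mu}=1$), $\Psi_\phi=\pm\cos\omega_{j,1}$ and $\Psi_\nu=\pm\sin\omega_{j,2}$, so $\Psi_{\theta\mu}=\partial_\theta\Psi_\mu=\pm\cos\omega_{j,1}\,\partial_\theta\omega_{j,1}$, which yields $|A_j|=\frac{\cos\omega_{j,1}\,|\partial_\theta\omega_{j,1}|}{\cos\omega_{j,1}\,\sin\omega_{j,2}}=\frac{1}{\sin\omega_{j,2}}\bigl|\frac{\partial\omega_{j,1}}{\partial\theta}\bigr|$.

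I expect the main obstacle to be making the invariance of $d_x\Psi$ under sliding $y$ along the final link rigorous \emph{at a boundary point}, where the $TT/TN/NT/NN$ configuration types (and hence the labeling of branches) degenerate: I would prove it in the interior, where $\Psi_j^k$ and $d_x\Psi_j^k$ are genuinely smooth and the ``extend the last link'' argument is transparent, and then pass to the limit $(x,y)\to\Delta\partial\Omega$, using the smooth extension of the functionals to a neighborhood of the diagonal of the boundary established in the proof of Lemma \ref{Sj lemma}. The remaining work is bookkeeping: tracking the orientation signs attached to each configuration (the sign-switching phenomenon flagged just before the lemma) and the factors $f\to 1$ consistently, so that one is certain it is the absolute values, not the signed quantities, that survive in $|A_j|$.
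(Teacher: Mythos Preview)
Your proof is correct and rests on the same two ingredients as the paper: the eikonal constraint $|d_x\Psi|=|d_y\Psi|=1$ and the invariance of $d_x\Psi$ when $y$ slides along the final link (which the paper writes as $\nabla_L\omega_{j,1}^k=0$). The only difference is organizational: the paper converts to the angles $\omega_{j,1},\omega_{j,2}$ at the outset and tracks the $TT/TN/NT/NN$ sign pattern case by case through \eqref{first derivatives}--\eqref{d omega d nu}, whereas you collapse $A_j$ to $\Psi_{\theta\mu}/(\Psi_\phi\Psi_\nu)$ first and substitute angles only at the end, which spares you the sign bookkeeping.
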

\begin{proof}
Let $(m,k)$ denote an admissable pair of indices corresponding to an orbit $\gamma_{m,k} \in \mathcal{G}_j$. Recall the notation in the proof of Lemma \ref{parameterization of cannonical relations} (cf. \cite{Vig18}), where we described a billiard trajectory by the point $(x,q,y) \in \Omega \times \d \Omega^m \times \Omega$. Let us first assume $1 \leq k \leq 4$. If $x \in \Omega$, we denote the positive angle between $q_1 - x$ and the positively oriented tangent line to the leaf of the folliation by distance curves on which $x$ lies by $\omega_{j,1}^k$ (cf. Section \ref{Proof of 8 orbit lemma}). Similarly, if $y \in \Omega$, let us also denote the positive angle between $y- q_j$ and the positively oriented tangent line to the distance curve on which $y$ lies by $\omega_{j,2}^k$. Note that $\omega_{j,1}^k$ and $\omega_{j,2}^k$ depend on $x, y, j$ and $k$. Using the warped product structure of boundary normal coordinates $(\mu, \phi, \nu, \theta)$, we have
\begin{align}\label{BNCG}
\begin{split}
\nabla_x^T &= {(f)}^{-1/2} \frac{\d}{\d\phi}, \qquad
\nabla_y^T = {(f)}^{-1/2} \frac{\d}{\d\theta},\\
\nabla_x^\perp &=  \frac{\d}{\d\mu}, \qquad \qquad \,\,\,\,\,\,\,\,
\nabla_y^\perp =  \frac{\d}{\d\nu}.
\end{split}
\end{align}
Equation \eqref{x y gradients}, which was derived from the proof of Lemma \ref{parameterization of cannonical relations} in \cite{Vig18}, then shows that
\begin{align}\label{perp grads}
\begin{split}
\nabla_x^\perp \Psi_j^k(x,y) = \begin{cases}
- \sin \omega_{j,1}^k, & k = 1,2\\
\sin \omega_{j,1}^k, & k = 3,4,
\end{cases} \qquad
\nabla_y^\perp \Psi_j^k(x,y) = \begin{cases}
- \sin \omega_{j,2}^k, & k = 1,3\\
\sin \omega_{j,2}^k, & k = 2,4,
\end{cases}
\end{split}
\end{align}
and
\begin{align}\label{tangential grads}
\nabla_x^T \Psi_j^k(x,y) = -\cos \omega_{j,1}^k, \quad (1 \leq k \leq 4) \qquad \qquad \nabla_y^T \Psi_j^k(x,y) = \cos \omega_{j,2}^k, \quad (1 \leq k \leq 4).
\end{align}
Combining \eqref{BNCG}, \eqref{perp grads}, and \eqref{tangential grads}, we obtain
\begin{align}\label{first derivatives}
\begin{split}
\nabla_{(\mu,\phi)} \Psi_j^k(\mu, \phi, \nu, \theta) &= \begin{cases}
(-\sin \omega_{j,1}^k, -f^{1/2}(\mu, \phi) \cos \omega_{j,1}^k ), & k = 1\\
(-\sin \omega_{j,1}^k, -f^{1/2}(\mu, \phi) \cos \omega_{j,1}^k ), & k = 2\\
(\sin \omega_{j,1}^k, -f^{1/2}(\mu, \phi) \cos \omega_{j,1}^k ), & k = 3\\
(\sin \omega_{j,1}^k, -f^{1/2}(\mu, \phi) \cos \omega_{j,1}^k ), & k = 4
\end{cases}\\
\nabla_{(\nu,\theta)} \Psi_j^k(\mu, \phi, \nu, \theta) &= \begin{cases}
(-\sin \omega_{j,2}^k, f^{1/2}(\nu, \theta) \cos \omega_{j,2}^k ),& k = 1\\
(\sin \omega_{j,2}^k, f^{1/2}(\nu, \theta) \cos \omega_{j,2}^k ), & k = 2\\
(-\sin \omega_{j,2}^k, f^{1/2}(\nu, \theta) \cos \omega_{j,2}^k ),& k = 3\\
(\sin \omega_{j,2}^k, f^{1/2}(\nu, \theta) \cos \omega_{j,2}^k ), & k = 4.
\end{cases}
\end{split}
\end{align}
Using \eqref{first derivatives} to calculate the $(\mu,\phi,\nu,\theta)$ Hessian of $\Psi_j^k$, we find
\begin{align}\label{second derivatives}
\begin{split}
\frac{\d^2 \Psi_j^k}{\d \theta \d \phi}(\mu, \phi,\nu, \theta) &=  f^{1/2}(\mu,\phi) \sin \omega_{j,1}^k \frac{\d \omega_{j,1}^k }{\d \theta}, \qquad (1 \leq k \leq 4)\\
\frac{\d^2 \Psi_j^k}{\d \nu \d \phi}(\mu, \phi,\nu, \theta) &= f^{1/2}(\mu,\phi) \sin \omega_{j,1}^k \frac{\d \omega_{j,1}^k }{\d \nu}, \qquad (1 \leq k \leq 4)\\
\frac{\d^2 \Psi_j^k}{\d \theta \d \mu}(\mu, \phi,\nu, \theta) &= \begin{cases}
- \cos \omega_{j,1}^k \frac{\d \omega_{j,1}^k }{\d \theta}, & k = 1,2\\
\cos \omega_{j,1}^k \frac{\d \omega_{j,1}^k }{\d \theta}, & k = 3,4
\end{cases} \\
\frac{\d^2 \Psi_j^k}{\d \nu \d \mu}(\mu, \phi,\nu, \theta) &= \begin{cases}
- \cos \omega_{j,1}^k \frac{\d \omega_{j,1}^k }{\d \nu}, & k = 1,2\\
\cos \omega_{j,1}^k \frac{\d \omega_{j,1}^k }{\d \nu}, & k = 3,4.
\end{cases}
\end{split}
\end{align}
Inserting \eqref{second derivatives} into the expression \eqref{Aj} for $A_j^k$ in all possible configurations $(1 \leq k \leq 4)$, we find that on the boundary,
\begin{align}\label{new Aj w derivatives}
A_m^k(0, \phi, 0, \theta) =
\begin{cases}
-\cos \omega_{j,2}^k \frac{\d\omega_{j,1}^k}{\d \nu} - \sin \omega_{j,2}^k \frac{\d \omega_{j,1}^k}{\d \theta}, & k = 1\\
- \cos \omega_{j,2}^k \frac{\d\omega_{j,1}^k}{\d \nu} + \sin \omega_{j,2}^k \frac{\d \omega_{j,1}^k}{\d \theta},  & k = 2\\
+ \cos \omega_{j,2}^k \frac{\d \omega_{j,1}^k}{\d \nu} + \sin \omega_{j,2}^k \frac{\d \omega_{j,1}^k}{\d \theta}, & k = 3\\
+ \cos \omega_{j,2}^k \frac{\d\omega_{j,1}^k}{\d \nu} - \sin \omega_{j,2}^k \frac{\d \omega_{j,1}^k}{\d \theta}, & k = 4.
\end{cases}
\end{align}
Before evaluating this expression on the diagonal of the boundary, we differentiate $\omega_{j,1}^k$ in the direction $L = (y-q)/|y-q|$ of the last link to see that
\begin{align*}
0 &= \nabla_L \omega_{j,1}^k\\
&= \begin{cases}
\cos \omega_{j,2}^k \nabla_y^T \omega_{j,1}^k - \sin \omega_{j,2}^k \nabla_y^\perp \omega_{j,1}^k, & k = 1,3\\
\cos \omega_{j,2}^k \nabla_y^T \omega_{j,1}^k + \sin \omega_{j,2}^k \nabla_y^\perp \omega_{j,1}^k, & k = 2,4
\end{cases}\\
&= \begin{cases}
\frac{1}{f^{1/2}} \cos \omega_{j,2}^k  \frac{\d \omega_{j,1}^k}{\d \theta} - \sin \omega_{j,2}^k  \frac{\d \omega_{j,1}^k}{\d \nu}, & k = 1,3\\
\frac{1}{f^{1/2}} \cos \omega_{j,2}^k  \frac{\d \omega_{j,1}^k}{\d \theta} + \sin \omega_{j,2}^k  \frac{\d \omega_{j,1}^k}{\d \nu}, & k = 2,4.
\end{cases}
\end{align*}
This implies that
\begin{align}\label{d omega d nu}
\sqrt{f(\nu, \theta)} \frac{\d \omega_{j,1}^k}{\d \nu} = \begin{cases}
+\cot \omega_{j,2}^k \frac{\d \omega_{j,1}^k}{\d \theta}, & k =1,3\\
- \cot \omega_{j,2}^k \frac{\d \omega_{j,1}^k}{\d \theta}, & k = 2,4.
\end{cases}
\end{align}
Inserting the formula \eqref{d omega d nu} into \eqref{new Aj w derivatives} and evaluating on $\d \Omega$, we find that
\begin{align*}
|A_m^k(0, \phi, 0, \theta)| =  \left(\frac{\cos^2 \omega_{j,2}^k}{\sin \omega_{j,2}^k} + \frac{\sin^2 \omega_{j,2}^k}{\sin \omega_{j,2}^k}  \right) \left| \frac{\d \omega_{j,1}^k}{\d \theta} \right|  = \frac{1}{\sin \omega_{j,2}^k} \left| \frac{\d \omega_{j,1}^k}{\d \theta} \right|.
\end{align*}
Note that for $1 \leq k \leq 4$ and $x, y$ on the boundary, both $\omega_{j,1}^k$ and $\omega_{j,2}^k$ are independent of $k$. They coincide with the inital and final angles of the unique orbit connecting $x$ to $y$ in $j$ reflections and approximately one counterclockwise rotation. As the orbits corresponding to $5 \leq k \leq 8$ can be viewed as counterclockwise orbits in the reflected domain, we instead defined $\omega_{j,1}^k$ and $\omega_{j,2}^k$ to be the angles made between the initial and final links and the \textit{negatively} oriented tangent lines to the distance curves on which $x$ and $y$ respectively lie. With this convention, we see by symmetry that the roles of $\omega_{j,1}^k$ and $\omega_{j,2}^k$ are interchanged:
\begin{align}\label{first derivatives k58}
\begin{split}
\nabla_{(\mu,\phi)} \Psi_j^k(\mu, \phi, \nu, \theta) &= \begin{cases}
(-\sin \omega_{j,1}^k, f^{1/2}(\mu, \phi) \cos \omega_{j,1}^k ), & k = 5\\
(-\sin \omega_{j,1}^k, f^{1/2}(\mu, \phi) \cos \omega_{j,1}^k ), & k = 6\\
(\sin \omega_{j,1}^k, f^{1/2}(\mu, \phi) \cos \omega_{j,1}^k ), & k = 7\\
(\sin \omega_{j,1}^k, f^{1/2}(\mu, \phi) \cos \omega_{j,1}^k ), & k = 8
\end{cases}\\
\nabla_{(\nu,\theta)} \Psi_j^k(\mu, \phi, \nu, \theta) &= \begin{cases}
(-\sin \omega_{j,2}^k, -f^{1/2}(\nu, \theta) \cos \omega_{j,2}^k ),& k = 5\\
(\sin \omega_{j,2}^k, -f^{1/2}(\nu, \theta) \cos \omega_{j,2}^k ), & k = 6\\
(-\sin \omega_{j,2}^k, -f^{1/2}(\nu, \theta) \cos \omega_{j,2}^k ),& k = 7\\
(\sin \omega_{j,2}^k, -f^{1/2}(\nu, \theta) \cos \omega_{j,2}^k ), & k = 8.
\end{cases}
\end{split}
\end{align}
Replacing \eqref{first derivatives} by \eqref{first derivatives k58}, parallel computations to those above then show that for $5 \leq k \leq 8$,
\begin{align*}
|A_m^k(0,\phi,0,\phi)| = \frac{1}{\sin \omega_{j,2}^k} \Big|\frac{\d \omega_{j,1}}{\d \theta}\Big|.
\end{align*}
Note that for $5 \leq k \leq 8$, $\omega_{j,1}^{k-4} - \omega_{j,2}^k \to 0$ as $x, y \to \d \Omega$. Similarly, $\omega_{j,2}^{k-4} - \omega_{j,1}^k \to 0$ as $x, y\to \d \Omega$. Hence,
\begin{align}\label{k58}
|A_m^k(0,\phi,0,\phi)| = \frac{1}{\sin \omega_{j,1}} \Big| \frac{\d \omega_{j,2} }{ \d \phi} \Big|,
\end{align}
for $5 \leq k \leq 8$. Note that the righthand side of \eqref{k58} involves angles at boundary points and does not depend on $k$. Also observe that
\begin{align}\label{Clairaut}
\begin{split}
\frac{\d^2 \Psi_j^k}{\d \phi \d \theta}  &= \pm \sin \omega_{j,2}^k \frac{\d \omega_{j,2}^k}{\d \phi}\\
\frac{\d^2 \Psi_j^k}{ \d \theta \d \phi} &= \pm \sin \omega_{j,1}^k \frac{\d \omega_{j,1}^k}{\d \theta}.
\end{split}
\end{align}
Setting the two equations in \eqref{Clairaut} equal implies that
\begin{align*}
\frac{1}{\sin \omega_{j,1}} \Big| \frac{\d \omega_{j,2} }{ \d \phi} \Big| = \frac{1}{\sin \omega_{j,2}} \Big| \frac{\d \omega_{j,2} }{ \d \phi} \Big|,
\end{align*}
which combined with \eqref{k58}, concludes the proof of the lemma.
\end{proof}
\begin{rema}
In \cite{Vig18}, complete integrability of the ellipse actually implies that $\omega_{j,1} = \omega_{j,2}$ for elliptical billiards i.e. every geodesic loop is in fact a periodic orbit. In that case, the angular derivative $\d \omega / \d \theta$ appearing in Lemma \ref{Aj w derivatives} was explicitly calculated, using action angle coordinates and Jacobi elliptic functions. In general, it may be difficult to compute $\d \omega_1/\d \theta$ explicitly despite its relatively simple geometric interpretation.
\end{rema}

\subsection{Maslov factors}\label{Maslov factors}
To explicitly compute the Maslov factors $\sigma_j^\pm$ on $\Gamma_{\pm}^{j}$, we use an argument due to Keller (\cite{Keller}), following the presentation in Appendix B of \cite{HaHiFo18}. The free wave propagator $U(t) = e^{-i t \sqrt{-\Delta}}$ on $\R^2$ has an integral kernel given by
\begin{align}
U(t,x,y) = (2\pi)^{-2} \int_{\R_\xi^2} e^{i(\langle x-y, \xi \rangle - |\xi| t)}d\xi|dx \wedge dy|^{1/2},
\end{align}
considered as a distributional half density (cf. Section \ref{Fourier Integral Operators}). Let $e_1 = (y-x)/|y - x|$ and $e_2 = J e_1$, where $J$ is a $\pi/2$ counterclockwise rotation. With respect to this basis, we may write $\xi = \tau e_1 + \rho e_2$ and hence
$$
U(t,x,y) = (2\pi)^{-2} \int \int e^{i (|x - y| \tau - t \sqrt{\tau^2 + \rho^2})} d\tau d \rho |dx \wedge dy|^{1/2}.
$$
We see that stationary points of the phase occur precisely when $|x-y| = t$, $\tau > 0$ and $\rho = 0$. Applying the method of stationary phase in the variable $\rho$, we find that
\begin{align}\label{conormal}
U(t,x,y) = (2\pi)^{-3/2} \int e^{i (|x - y| - t)\tau } e^{-i\pi/4} \left(\frac{\tau}{t} \right)^{1/2} d\tau |dx \wedge dy|^{1/2}
\end{align}
to leading order. The reduction in number of phase variables in formula \ref{conormal} expresses $U(t,x,y)$ as a classical conormal distribution with principal symbol
\begin{align}\label{psym}
e^{-i\pi/4} \left(\frac{\tau}{t} \right)^{1/2} |dt \wedge d \tau \wedge ds \wedge dy|^{1/2} \in S^{1/2}(N^*\{|x-y| = t\}) \otimes \Omega_{1/2},
\end{align}
where $\Omega_{1/2}$ is the space of positive half densities on $\R \times \R^2 \times \R^2$, $s$ is an arclength coordinate on the hypersurface $|x - y| = t$ (a distance sphere) and $\tau$ is the symplectically dual coordinate to $|x - y| - t$.
\\
\\
Hence, the Maslov indices on $\Gamma_{\pm}^0 = N^*\{|x-y| = t: \pm \tau > 0 \}$ are given by $\sigma_0^{\pm} = \pm 1$. It is shown in Section 5 of \cite{GuMe79b} that after a reflection at the boundary, the Maslov factors remain unchanged. Hence, $\sigma_j^\pm = \pm 1$ for all $j \in \Z$. Both of the propagators corresponding to $\Gamma_{+}^j$ and $\Gamma_-^j$ contribute to the wave trace singularity near $[t_j,T_j]$, owing to the two modes of propagation:
\begin{align}\label{two branches}
S(t) = \frac{e^{i t \sqrt{-\Delta}} - e^{- i t \sqrt{-\Delta}}}{2 i \sqrt{-\Delta} }.
\end{align}
Hence, we multiply each $\pm$ branch of the Lagrangian distributions $S_{j, \pm}$ by $e^{\pm i \pi/ 4}$, which explains the real parts taken in Theorem \ref{Main theorem}, Theorem \ref{Rational caustic theorem} and Corollary \ref{Ellipse corollary}. Noting that the principal symbol of $\sqrt{-\Delta}$ is $|\xi|$ and $\tau = \pm |\eta| = \pm |\xi|$ on the canonical relations $\Gamma_{\pm}^0$, equations \eqref{psym}, \eqref{two branches} and the convention \ref{order of a lagrangian distribution} show that the principal symbol of $S(t)$ on $\Gamma^0$ has order
\begin{align*}
\frac{1}{2} + \frac{1}{2} + (2 + 3)/ 4 - 1 = - 5/4.
\end{align*}
Applying an elliptic parametrix for $(-\Delta_{\R^2})^{-1/2} \in \Psi^{-1}$ to the free sine wave $S(t)$ shows that $E(t) \in I^{-1/4}(\R^2 \times \R^2; N^*(\{|x - y| = t\})')$, corroborating the order appearing in Theorem \ref{HRP}.



\subsection{Proofs of Theorem \ref{Main theorem}, Theorem \ref{Rational caustic theorem} and Corollary \ref{Ellipse corollary}}
Theorem \ref{Main theorem} readily follows from the formula \ref{parametrix with 8 Dirichlet}, Lemma \ref{Aj w derivatives} and the computations in Section \ref{Maslov factors}. We now show how Theorem \ref{Rational caustic theorem} and Corollary \ref{Ellipse corollary} follow directly from Theorem \ref{Main theorem}. Recall from Section \ref{Billiards} that a caustic is a smooth curve $\mathcal{C}$ in $\Omega$ such that every link tangent to $\mathcal{C}$ remains tangent to $\mathcal{C}$ after a reflection at the boundary. It is well known (see \cite{Katok}, \cite{Tabachnikov}) that if one periodic orbit is tangent to a caustic $\mathcal{C}$, then every orbit tangent to $\mathcal{C}$ is in fact periodic, with the same period (number of bounces) and winding number. In this case, we say $\mathcal{C}$ is a rational caustic and use $\omega(\mathcal{C})$ to denote the (rational) rotation number of any orbit $\gamma$ tangent to $\mathcal{C}$. Hence, rational caustics correspond to highly degenerate periodic orbits in the sense that they are not isolated and $1$ is an eigenvalue of the Poincar\'e map. Let $L_j$ be the length of a periodic orbit which is tangent to a caustic $\mathcal{C}_j$, making $j$ reflections at the boundary and one rotation. As all orbits tangent to $\mathcal{C}$ are periodic orbits, each $q \in \d \Omega$ is a critical point of the $j$-loop function, i.e. $\d_q \Psi_j(q,q) = 0$. In this case, the length function $\Psi_j(q,q)$ appearing in the phase of $\text{Tr} \cos t \sqrt{-\Delta}$ (as in Theorem \ref{Main theorem}) is the constant function $L_j$, i.e. every $j$ reflection loop is in fact a periodic orbit of length $L_j$. Assuming the noncoincidence condition \eqref{NCC} on $\Omega$, all periodic orbits of length $L_j$ arise in this way. Tangency to $\mathcal{C}$ also implies that the angles $\omega_1$ and $\omega_2$ in the amplitude are equal. Hence, the wave trace in Theroem \ref{Main theorem} is given  by
\begin{align}\label{almost caustic}
\Re \left\{(-1)^j e^{-i \pi / 4} {4} \int_{\d \Omega} \int_{0}^\infty e^{i \xi (t- L_j)} |\xi|^{1/2} \sin^{3/2} \omega_1 \left|\frac{\d \omega_1}{\d \theta}\right|^{1/2} X \cdot N d\xi dq\right\}.
\end{align}
As the $dq$ and $d\xi$ integrals can be separated, we obtain the Fourier transform of the homogeneous distribution
\begin{align*}
\chi_+^{3/2}(\xi) = \begin{cases}
\xi^{3/2} & \xi \geq 0\\
0 & \xi < 0.
\end{cases} 
\end{align*}
One can define $\chi_+^a$ similarly as an $L_\text{loc}^1$ function for $\Re a > -1$ and these distributions fact be analytically continued to a larger region of $a \in \C$. It is shown in \cite{Ho90} (Chapter 7) that the Fourier transform of $\chi_+^a$ (with dual variable $t$) is given by $e^{-i\pi (a+1)/2} (t - i 0)^{-a-1}$. The proof of Theorem \ref{Rational caustic theorem} is concluded by evaluating the Fourier transform in equation \eqref{almost caustic} at the point $t - L_j$.
\\
\\
In the special case of an ellipse, the billiard flow is known to be completely integrable and each confocal ellipse is in fact a caustic. Moreover, Poncelet's thoerem (see \cite{Poncelet1}, \cite{Poncelet2}) implies that all periodic orbits tangent to a given confocal ellipse have the same length. Hence, all periodic orbits in the ellipse correspond to rational caustics. It is also known (see \cite{GuMe79a}) that ellipses satisfy the noncoincidence condtion \ref{NCC} and hence, Theorem \ref{Rational caustic theorem} applies. Calculations from the author's previous work using action-angle coordinates and Jacobi elliptic function theory allow for the explicit computation of $\d \omega / \d \theta$ appearing in the integrand of \eqref{almost caustic} (see Section 5.5 of \cite{Vig18}). As the boundary of an ellipse
$$
\left\{(x,y): \frac{x^2}{a^2} + \frac{y^2}{b^2} \leq 1 \right\}
$$
is easily parametrized by $(a \cos \phi, b \sin \phi)$ for $\phi \in [0,2 \pi)$, the quantity $X(q) \cdot N(q) dq$ can be explicitly calculated. Combining these observations with Theorem \ref{Rational caustic theorem}, we obtain the formula appearing in Corollary \ref{Ellipse corollary}.

\section{An auxiliary check on the order of $S_j^k$}\label{Check on the order}
\noindent Given a discrepancy in the works \cite{MaMe82} and \cite{Popov1994}, we provide an additional check on the order of $a_0^j$ in Theorem \ref{Main theorem}. Assume $\Omega$ satisfies the noncoincidence condition \eqref{NCC} and let $\rho \in \mathcal{S}(\R)$ be a test function such that $\text{Supp} {\rho} \subset [t_j - \epsilon ,T_j + \epsilon]$, where $\epsilon$ is sufficiently small to ensure $\text{Supp} \rho \cap \text{LSP}(\Omega) = [t_j, T_j]$. Let $L_j \in [t_j, T_j]$ denote the length of a periodic orbit of rotation number $1/j$. We will compute the quantity
\begin{align}\label{SP1}
\mathcal{F}({{\rho}(t) \text{Tr} \cos t \sqrt{-\Delta}} ) = \int e^{-i\lambda t} \rho(t) \text{Tr} \cos t \sqrt{-\Delta} dt
\end{align}
in two different ways. Recalling our parametrix for $\cos t \sqrt{-\Delta}$, we see that \eqref{SP1} is given by
\begin{align}\label{SP2}
\sum_{\pm} \int_{\R_t} \int_{0}^\infty \int_{\d \Omega} e^{\pm i \tau (t - \Psi_j(q,q)) - \lambda t} \rho(t) \tau^m a^j(q) dq d \tau dt,
\end{align}
where $m$ is the purported order of $a^j(q)$. Changing variables by $\xi = \tau/\lambda$, we see that \eqref{SP2} becomes
\begin{align}\label{SP5}
\sum_{\pm} \int_{\R_t} \int_{0}^\infty \int_{\d \Omega} e^{\pm i \lambda \xi (t - \Psi_j(q,q)) - \lambda t} \rho(t) \lambda^{m+1} \xi^m a_0(q) dq d \xi dt.
\end{align}
To understand the $\lambda$ asymptotics of this oscillatory integral, we apply the method of stationary phase. First assume $L_j$ is a simple nondegenerate length. On the critical set, $ d_{t,\xi,q} (\xi(t - \Psi_j(q,q)) - t)  = 0$, which implies
\begin{align*}
\begin{cases}
t = \Psi_j(q,q)\\
\xi = 1\\
d_q \psi_j(q,q) = 0.
\end{cases}
\end{align*}
Hence, \eqref{SP1} is given by
\begin{align}\label{SP4}
(2\pi)^{3/2} \lambda^{m - 1/2} \sum_{q: d_q \Psi_j(q,q) = 0} \frac{e^{ i\pi/4\text{sgn Hess}(\Psi_j(q,q)) }\rho(\Psi_j(q,q))a_0(q)}{| \d_q^2(\Psi_j(q,q))|^{1/2}} + o(\lambda^{m-1/2}).
\end{align}
Recall that periodic orbits of rotation number $1/j$ arise from critical points of the $j$-loop function, i.e. $d_q \Psi_j(q,q) = 0$ implies that the geodesic loop of $j$ reflections based at $q$ is actually a periodic orbit. Since $L_j$ was assumed to be simple, corresponding to a unique nondegenerate orbit, there are precisely $j$ such boundary points $q$ and the sum in \eqref{SP4} is actually finite. It is shown in Theorem 3 of \cite{KozTresh89} that nondegeneracy of $\gamma$ also implies $\d_q^2(\Psi_j(q,q)) \neq 0$.
Now, by the formulas in \cite{GuMe79b}, we know that for a simple length $L_j$ corresponding to a nondegenerate periodic orbit of $j$ reflections, the leading asymptotic of the wave trace modulo Maslov factors is given by
\begin{align}\label{GuMePoisson}
\sum \cos t \lambda_j = \frac{L_j}{|\det(I - P_\gamma)|^{1/2}} (t - L_j + i0)^{-1} \mod L_{loc}^1(\R),
\end{align}
where $P_\gamma$ is the Poincare map associated to the unique periodic orbit $\gamma$ of length $L_j$.  Formulas in \cite{Ho90} tell us that the Fourier transform of the righthand side of \eqref{GuMePoisson} is a constant multiple of the Heaviside function. Comparing degrees of homogeneity in \eqref{SP4} and \eqref{GuMePoisson} immediately implies that $m = 1/2$.
\\
\\
If there are infinitely many critical points of $\Psi_j(q,q)$ in the phase of \eqref{SP4}, the analysis is more subtle. For example, Poncelet's theorem for elliptical billiards actually implies that periodic orbits of a fixed length and rotation number come in one parameter families. For $j$ sufficiently large, every boundary point is the base point for a unique periodic orbit tangent to a single confocal ellipse, making $j$ reflections and a single rotation. The lengths of these orbits are independent of the base point, which implies $d_q \Psi_j(q,q)$ vanishes identically. In this case, $\Psi_j(q,q) = L_j$ and applying stationary phase to \eqref{SP5} yields
\begin{align}\label{SP3}
2\pi \lambda^{m} e^{i \lambda L_j}  \rho(L_j) \int_{\d \Omega}  a_0(q)dq  + o\left( \lambda^{m}\right).
\end{align}
While the expansion \eqref{GuMePoisson} is no longer valid for high length spectral multiplicity, the formulas in \cite{HeZe12} and \cite{Vig18} show that the \textit{variation} of the wave trace near such a period in the length spectrum is a distribution of the form $d_j \Re (t- L_j + i0)^{-5/2}$ for some constant $d_j$. 
Formally, the wave trace has one higher degree of regularity than its variation, which suggests that the wave trace is of the form $c_j (t - L_j +i 0)^{-3/2}$, in agreement with Theorem \ref{Rational caustic theorem} and Corollary \ref{Ellipse corollary}. Comparing this with the asymptotics in \eqref{SP3} and formulas in \cite{Ho90} for the Fourier transform of homogeneous distributions, we see again that $m = -1 - (-3/2) = 1/2$. The order of $a_0^j$ was also confirmed in the recent paper \cite{HeZe19}.


\section{Acknowledgements} \noindent The author would like to thank Hamid Hezari for his many suggestions throughout this project and the anonymous referees for pointing out the Friedlander model and the commutator identity appearing Section 6 in place of Hadamard's variational formula.

\bibliographystyle{alpha}
\bibliography{WTBBreferences}

\end{document}